\newtheorem{theorem}{Theorem}[section]
\newtheorem{lemma}[theorem]{Lemma}
\newtheorem{coro}[theorem]{Corollary}
\newtheorem{prop}[theorem]{Proposition}
\theoremstyle{definition}
\newtheorem{defn}[theorem]{Definition}
\newtheorem{exam}[theorem]{Example}
\newtheorem{convention}[theorem]{Convention}
\newtheorem{const}[theorem]{Construction}
\newcommand{\nc}{\newcommand}
\newcommand{\delete}[1]{}
\nc{\tred}[1]{\textcolor{red}{#1}}
\nc{\tblue}[1]{\textcolor{blue}{#1}} \nc{\tgreen}[1]{\textcolor{green}{#1}} \nc{\tpurple}[1]{\textcolor{purple}{#1}} \nc{\btred}[1]{\textcolor{red}{\bf #1}} \nc{\btblue}[1]{\textcolor{blue}{\bf #1}} \nc{\btgreen}[1]{\textcolor{green}{\bf #1}} \nc{\btpurple}[1]{\textcolor{purple}{\bf #1}}
\newcommand{\efootnote}[1]{}
\nc{\mlabel}[1]{\label{#1}}  
\nc{\mcite}[2][]{\cite[#1]{#2}}  
\nc{\mref}[1]{\ref{#1}}  
\nc{\mbibitem}[1]{\bibitem{#1}} 
\nc{\mlabel}[1]{\label{#1}  
{\hfill \hspace{1cm}{\bf{{\ }\hfill(#1)}}}}
\nc{\mcite}[1]{\cite{#1}}  
\nc{\mref}[1]{\ref{#1}{{\bf{{\ }(#1)}}}}  
\nc{\mbibitem}[1]{\bibitem[\bf #1]{#1}} 
\renewcommand\geq{\geqslant}
\renewcommand\leq{\leqslant}
\renewcommand\bar[1]{\overline{#1}}
\nc{\fA}{{\mathfrak A}}
\nc{\myone}{{\bf 1}}
\nc{\bfu}{{\bf u}}
\nc{\bfv}{{\bf v}}
\nc{\nz}{\varepsilon}
\nc{\Id}{\mathrm{Id}}
\nc{\map}[2]{{#2}^{#1}}
\nc{\gp}{B}
\nc{\wcomp}{\large{\VDash}}
\nc{\set}{\mathrm{set}}
\nc{\Irr}{\mathrm{Irr}}
\nc{\vx}{\sigma} \nc{\vy}{\tau} \nc{\dvx}{\sigma^{(1)}} \nc{\dvy}{\tau^{(1)}} \nc{\done}{\vep} \nc{\mcitep}[1]{\mcite{#1}} \nc{\wt}{\mathrm{wt}} \nc{\bre}[1]{|#1|} \nc{\mapmonoid}{\frakM} \nc{\disjoint}{\frakM'}
\nc{\ncpoly}[1]{\langle #1\rangle}  
\nc{\mapm}[1]{\lfloor\!|{#1}|\!\rfloor}
\nc{\diff}[1]{{}^\NC\{ #1 \}} \nc{\disj}[1]{\{{#1}\}'} \nc{\mdisj}[1]{\frakM'(#1)} \nc{\brho}{\bar{\rho}} \nc{\om}{\bar{\frakm}} \nc{\frakn}{\mathfrak n} \nc{\ddeg}[1]{^{(#1)}} \nc{\opset}{X} \nc{\genset}{{Z}} \nc{\NC}{\mathrm{{NC}}} \nc{\leaf}{\mathrm{leaf}} \nc{\twig}{\mathrm{twig}} \nc{\fe}{\mathrm{fl}} \nc{\munderline}[1]{#1} \nc{\bo}{o} \nc{\dep}{\mathrm{depth}} \nc{\ofe}{\mathrm{ofl}} \nc{\dfe}{\mathrm{dfe}} \nc{\fex}{\mathrm{fex}} \nc{\dl}{\mathrm{dlex}} \nc{\db}{\mathrm{db}} \nc{\lex}{\mathrm{lex}} \nc{\clex}{\mathrm{clex}} \nc{\dgp}{\mathrm{dgp}} \nc{\dgx}{\mathrm{dgx}} \nc{\br}{\mathrm{br}} \nc{\obd}{\mathrm{odb}} \nc{\ob}{\mathrm{ob}}
\nc{\pie}{\mathrm{PIE}}
\nc{\rbo}{\mathrm{RBO}}
\nc{\supp}{\mathcal{S}}
\nc{\nul}{\mathcal{Z}}
\nc{\bin}[2]{ (_{\stackrel{\scs{#1}}{\scs{#2}}})}  
\nc{\binc}[2]{ \left (\!\! \begin{array}{c} \scs{#1}\\
    \scs{#2} \end{array}\!\! \right )}  
\nc{\bincc}[2]{  \left ( {\scs{#1} \atop
    \vspace{-1cm}\scs{#2}} \right )}  
\nc{\bs}{\bar{S}} \nc{\cosum}{\sqsubset} \nc{\la}{\longrightarrow} \nc{\rar}{\rightarrow} \nc{\dar}{\downarrow} \nc{\dprod}{**} \nc{\dap}[1]{\downarrow \rlap{$\scriptstyle{#1}$}} \nc{\md}[1]{\bar{#1}} \nc{\uap}[1]{\uparrow \rlap{$\scriptstyle{#1}$}} \nc{\defeq}{\stackrel{\rm def}{=}} \nc{\disp}[1]{\displaystyle{#1}} \nc{\dotcup}{\ \displaystyle{\bigcup^\bullet}\ } \nc{\gzeta}{\bar{\zeta}} \nc{\hcm}{\ \hat{,}\ } \nc{\hts}{\hat{\otimes}} \nc{\barot}{{\otimes}} \nc{\free}[1]{\bar{#1}} \nc{\uni}[1]{\tilde{#1}} \nc{\hcirc}{\hat{\circ}} \nc{\leng}{\ell} \nc{\lleft}{[} \nc{\lright}{]} \nc{\lc}{\lfloor} \nc{\rc}{\rfloor}
\nc{\lb}{[} 
\nc{\rb}{]} 
\nc{\curlyl}{\left \{ \begin{array}{c} {} \\ {} \end{array}
    \right.  \!\!\!\!\!\!\!}
\nc{\curlyr}{ \!\!\!\!\!\!\!
    \left. \begin{array}{c} {} \\ {} \end{array}
    \right \} }
\nc{\longmid}{\left | \begin{array}{c} {} \\ {} \end{array}
    \right. \!\!\!\!\!\!\!}
\nc{\onetree}{\bullet} \nc{\ora}[1]{\stackrel{#1}{\rar}}
\nc{\ola}[1]{\stackrel{#1}{\la}}
\nc{\ot}{\otimes} \nc{\mot}{{{\boxtimes\,}}} \nc{\otm}{\overline{\boxtimes}} \nc{\sprod}{\bullet} \nc{\scs}[1]{\scriptstyle{#1}} \nc{\mrm}[1]{{\rm #1}} \nc{\msum}{\sum\limits}
\nc{\margin}[1]{\marginpar{\rm #1}}   
\nc{\dirlim}{\displaystyle{\lim_{\longrightarrow}}\,} \nc{\invlim}{\displaystyle{\lim_{\longleftarrow}}\,} \nc{\mvp}{\vspace{0.3cm}} \nc{\tk}{^{(k)}} \nc{\tp}{^\prime} \nc{\ttp}{^{\prime\prime}} \nc{\svp}{\vspace{2cm}} \nc{\vp}{\vspace{8cm}} \nc{\proofbegin}{\noindent{\bf Proof: }}
\nc{\proofend}{$\blacksquare$ \vspace{0.3cm}}
\nc{\modg}[1]{\!<\!\!{#1}\!\!>}
\nc{\intg}[1]{F_C(#1)} \nc{\lmodg}{\!<\!\!} \nc{\rmodg}{\!\!>\!} \nc{\cpi}{\widehat{\Pi}}
\nc{\sha}{{\mbox{\cyr X}}}  
\nc{\shap}{{\mbox{\cyrs X}}} 
\nc{\shpr}{\diamond}    
\nc{\shp}{\ast} \nc{\shplus}{\shpr^+}
\nc{\shprc}{\shpr_c}    
\nc{\msh}{\ast} \nc{\zprod}{m_0} \nc{\oprod}{m_1} \nc{\vep}{\varepsilon} \nc{\labs}{\mid\!} \nc{\rabs}{\!\mid}
\nc{\astarrow}{\overset{\raisebox{-3pt}{$\ast$}}{\rightarrow}}
\nc{\LWC}{\mrm{LWC}}
\nc{\sym}{\mrm{Sym}}
\nc{\qsym}{\mrm{QSym}}
\nc{\syms}{symmetric functions\xspace}
\nc{\eqsym}{LWC quasi-symmetric function\xspace}
\nc{\eqsyms}{LWC quasi-symmetric functions\xspace}
\nc{\Eqsyms}{LWC quasi-symmetric functions\xspace}
\nc{\Esyms}{LWC symmetric functions\xspace}
\nc{\EQSYM}{\mrm{LWCQSym}}
\nc{\sgqsym}{quasi-symmetric function with semigroup exponents\xspace}
\nc{\sgqsyms}{quasi-symmetric functions with semigroup exponents\xspace}
\nc{\Sgqsyms}{Quasi-symmetric functions with semigroup exponents\xspace}
\nc{\lwc}{LWC\xspace}
\nc{\SGQSYM}{\mrm{SGQSYM}}
\nc{\emzv}{left weak composition multiple zeta value\xspace}
\nc{\emzvs}{left weak composition multiple zeta values\xspace}
\nc{\lwcmzv}{LWCMZV\xspace}
\nc{\lwcmzvs}{LWCMZVs\xspace}
\nc{\sgfps}{formal power series with semigroup exponent\xspace}
\nc{\nsymg}{\mathrm{NSym}_\gp}
\nc{\parr}{\rm Par}
\nc{\wpar}{\rm WPar}
\nc{\dth}{d} \nc{\mmbox}[1]{\mbox{\ #1\ }} \nc{\fp}{\mrm{FP}} \nc{\rchar}{\mrm{char}} \nc{\Fil}{\mrm{Fil}} \nc{\Mor}{Mor\xspace} \nc{\gmzvs}{gMZV\xspace} \nc{\gmzv}{gMZV\xspace} \nc{\mzv}{MZV\xspace} \nc{\mzvs}{MZVs\xspace} \nc{\Hom}{\mrm{Hom}} \nc{\id}{\mrm{id}} \nc{\im}{\mrm{im}} \nc{\incl}{\mrm{incl}}  \nc{\mchar}{\rm char}
\nc{\Alg}{\mathbf{Alg}} \nc{\Bax}{\mathbf{Bax}} \nc{\bff}{\mathbf f} \nc{\bfk}{{\bf k}} \nc{\bfone}{{\bf 1}} \nc{\bfx}{\mathbf x} \nc{\bfy}{\mathbf y}
\nc{\base}[1]{\bfone^{\otimes ({#1}+1)}} 
\nc{\Cat}{\mathbf{Cat}} \delete{}
\nc{\detail}{\marginpar{\bf More detail}
    \noindent{\bf Need more detail!}
    \svp}
\nc{\Int}{\mathbf{Int}} \nc{\Mon}{\mathbf{Mon}}
\nc{\rbtm}{{shuffle }}
\nc{\rbto}{{Rota--Baxter }} \nc{\remarks}{\noindent{\bf Remarks: }} \nc{\Rings}{\mathbf{Rings}} \nc{\Sets}{\mathbf{Sets}}
\nc{\balpha}{\mathbf{\alpha}}
\nc{\BA}{{\mathbb A}} \nc{\CC}{{\mathbb C}} \nc{\DD}{{\mathbb D}} \nc{\EE}{{\mathbb E}} \nc{\FF}{{\mathbb F}} \nc{\GG}{{\mathbb G}} \nc{\HH}{{\mathbb H}} \nc{\LL}{{\mathbb L}} \nc{\NN}{{\mathbb N}} \nc{\KK}{{\mathbb K}} \nc{\PP}{{\mathbb P}} \nc{\QQ}{{\mathbb Q}} \nc{\RR}{{\mathbb R}} \nc{\TT}{{\mathbb T}} \nc{\VV}{{\mathbb V}} \nc{\ZZ}{{\mathbb Z}}
\nc{\cala}{{\mathcal A}} \nc{\calc}{{\mathcal C}} \nc{\cald}{{\mathcal D}} \nc{\cale}{{\mathcal E}} \nc{\calf}{{\mathcal F}} \nc{\calg}{{\mathcal G}} \nc{\calh}{{\mathcal H}} \nc{\cali}{{\mathcal I}} \nc{\call}{{\mathcal L}} \nc{\calm}{{\mathcal M}} \nc{\caln}{{\mathcal N}} \nc{\calo}{{\mathcal O}} \nc{\calp}{{\mathcal P}} \nc{\calr}{{\mathcal R}} \nc{\cals}{{\mathcal S}} \nc{\calt}{{\mathcal T}} \nc{\calw}{{\mathcal W}} \nc{\calk}{{\mathcal K}} \nc{\calx}{{\mathcal X}}
\nc{\calz}{{\mathcal Z}}
 \nc{\CA}{\mathcal{A}}
\nc{\fraka}{{\mathfrak a}} \nc{\frakA}{{\mathfrak A}} \nc{\frakb}{{\mathfrak b}} \nc{\frakB}{{\mathfrak B}}
\nc{\frakc}{{\mathfrak c}}  \nc{\frakD}{{\mathfrak D}}
\nc{\frakH}{{\mathfrak H}}
\nc{\frakh}{{\mathfrak h}} \nc{\frakM}{{\mathfrak M}}
\nc{\frakO}{{\mathfrak O}}
\nc{\frakE}{{\mathfrak E}}
\nc{\bfrakM}{\overline{\frakM}} \nc{\frakm}{{\mathfrak m}} \nc{\frakP}{{\mathfrak P}} \nc{\frakN}{{\mathfrak N}} \nc{\frakp}{{\mathfrak p}} \nc{\frakS}{{\mathfrak S}}
\nc{\frakk}{{\mathfrak k}}
\nc{\frakx}{{\mathfrak x}}
\nc{\frakl}{{\mathfrak l}} \nc{\ox}{\bar{\frakx}} \nc{\frakX}{{\mathfrak X}} \nc{\fraky}{{\mathfrak y}} \nc\dop{\delta}
\nc{\Reduce}{{\rm Red}}
\font\cyr=wncyr10 \font\cyrs=wncyr7
\nc{\redt}[1]{\textcolor{red}{#1}}
\nc{\li}[1]{\textcolor{red}{\tt Li:#1}}
\nc{\yu}[1]{\textcolor{blue}{\tt Yu:#1}}
\nc{\jz}[1]{\textcolor{green}{\tt JZ:#1}}
\begin{document}
\title{Rota--Baxter algebras and left weak composition quasi-symmetric functions}

\author{Li Guo}
\address{
Department of Mathematics and Computer Science, Rutgers University, Newark, NJ 07102, USA}
\email{liguo@rutgers.edu}

\author{Houyi Yu}
\address{School  of Mathematics and Statistics, Southwest University, Chongqing, 400715, China}
\email{yuhouyi@swu.edu.cn}

\author{Jianqiang Zhao}
\address{ICMAT,  C/Nicol\'as Cabrera, no.~13-15, 28049 Madrid, Spain}
\email{zhaoj@ihes.fr}

\hyphenpenalty=8000
\date{\today}

\begin{abstract}
Motivated by a question of Rota, this paper studies the relationship between Rota--Baxter algebras and symmetric related functions. The starting point is the fact that the space of quasi-symmetric functions is spanned by monomial quasi-symmetric functions which are indexed by compositions. When composition is replaced by left weak composition (LWC), we obtain the concept of LWC monomial quasi-symmetric functions and the resulting space of LWC quasi-symmetric functions. In line with the question of Rota, the latter is shown to be isomorphic to the free commutative nonunitary Rota--Baxter algebra on one generator. The combinatorial interpretation of quasi-symmetric functions by $P$-partitions from compositions is extended to the context of left weak compositions, leading to the concept of LWC fundamental quasi-symmetric functions. The transformation formulas for LWC monomial and LWC fundamental quasi-symmetric functions are obtained, generalizing the corresponding results for quasi-symmetric functions. Extending the close relationship between quasi-symmetric functions and multiple zeta values, weighted multiple zeta values and a $q$-analog of multiple zeta values are investigated and a decomposition formula is established.
\end{abstract}

\subjclass[2010]{05E05,16W99,11M32}

\keywords{Rota-Baxter algebras, symmetric functions, quasi-symmetric functions,  left weak compositions, monomial quasi-symmetric functions, fundamental quasi-symmetric functions, $P$-partitions, lmultiple zeta values, $q$-multiple zeta values}

\maketitle

\tableofcontents

\hyphenpenalty=8000 \setcounter{section}{0}


\allowdisplaybreaks

\section{Introduction}\label{sec:int}

Symmetric functions have played an important role in mathematics for a long time~\cite{Ma,Sta2}. More recently, their generalizations, especially quasi-symmetric functions and noncommutative symmetric functions, have played similar roles.
Quasi-symmetric functions were defined explicitly by Gessel~\cite{Ge} in 1984 with motivation from the $P$-partitions of Stanley~\cite{St1} in 1972. Most of their study was carried out after the middle 1990s. The last decade witnessed a surge in the research on quasi-symmetric functions with broad applications. Conceptually, the central role played by quasi-symmetric functions was demonstrated by the result of Aguiar, Bergeron and Sottile~\cite{ABS} that the Hopf algebra of quasi-symmetric functions is the terminal object in the category of combinatorial Hopf algebras. This Hopf algebra is the graded dual of the Hopf algebra of noncommutative symmetric functions~\cite{NCS}, another important generalization of symmetric functions. Generalizations of quasi-symmetric functions have also been introduced (see for example~\cite{HK,NTT}). Further details of quasi-symmetric functions can be found in the monograph~\cite{LMW} and the references therein.
In this paper we study the close relationship between Rota--Baxter algebras and the algebras of quasi-symmetric functions, and investigate a new class of
multiple zeta values that arise from this relationship.

Rota--Baxter algebra is an abstraction of the algebra of continuous functions equipped with the integration operator, characterized by the integration by parts formula. Shortly after its introduction by G. Baxter~\cite{Ba} in 1960 from his probability study, Rota--Baxter algebra (called Baxter algebra in the early literature) attracted the interest of well-known combinatorists such as Cartier and Rota. Rota gave the first construction of free commutative Rota--Baxter algebras~\cite{Ro1,RS} and thereby showed that the Spitzer's identity that he established for Rota--Baxter algebras has one of its incarnations as the Waring formula of symmetric functions relating elementary and power sum symmetric functions (see Eq.~(\mref{eq:waring})). He also proposed to study symmetric functions in the framework of Rota--Baxter algebras.
His interest in Rota--Baxter algebras continued into the mid 1990s when he wrote, in his inspiring survey article~\cite{Ro2},
\begin{quote}
By analyzing the work of Baxter and Atkinson, I was led to conjecture that a very close relationship exists between the Baxter identity (1) and the algebra of symmetric functions.
\end{quote}
and concluded
\begin{quote}
The theory of symmetric functions of vector arguments (or Gessel functions) fits nicely with Baxter operators; in fact, identities for such functions easily translate into identities for Baxter operators. $\cdots$
In short: Baxter algebras represent the ultimate and most natural generalization of the algebra of symmetric functions.
\end{quote}

Partly motivated by Rota's enthusiasm on Rota--Baxter algebras\footnote{For example, L. Guo was introduced to the subject of the Rota--Baxter algebra by Rota's suggestion to extend a joint work of L. Guo and W. Keigher submitted to him to the context of Rota--Baxter algebras, resulting in~\cite{G-K1,G-K2} and their sequels.}, their study has experienced a remarkable renaissance this century with broad applications in mathematics and mathematical physics, in areas including combinatorics, Hopf algebras, operads, number theory, quantum field theory and classical Yang--Baxter equations~\cite{Ag,CK,EGK,G-K1,RR}. See~\cite{Gub}, as well as \cite{KRY}, for a more detailed introduction to this subject.

In particular, the connection of Rota--Baxter algebra with quasi-symmetric function envisioned by Rota was partly established by the equivalence~\cite{EG} of the mixable shuffle product introduced in~\cite{G-K1} (also known as overlapping shuffles~
\cite{Ha}) in a free commutative Rota--Baxter algebra and the quasi-shuffle product~\cite{Ho} which is known to be a generalization of quasi-symmetric functions. This realizes the algebra of quasi-symmetric functions as a large part of a free commutative Rota--Baxter algebra (on one generator).
To further develop Rota's insight, it would be useful to interpret the full free commutative Rota--Baxter algebra as a suitable generalization of quasi-symmetric functions. This is the purpose of our study. In this paper, we study free commutative nonunitary Rota--Baxter algebras. The unitary case will be considered in a separate work.

The benefit of this connection between quasi-symmetric functions and Rota--Baxter algebras is bidirectional. On the one hand, this interaction allows us to realize the abstractly defined free Rota--Baxter algebras as subalgebras of concretely defined polynomial or power series algebras. On the other hand, this interaction allows us to find generalizations of these symmetric related functions, in the spirit of Rota's aforementioned quotes. As a result, we obtain new classes of functions that share properties similar to  quasi-symmetric functions, including their canonical bases consisting of monomial and fundamental quasi-symmetric functions.

Another benefit of this connection is in the study of multiple zeta values (MZVs for short), started with
Euler and Goldbacher in the two variable case, and systematically investigated since the 1990s with the work of Hoffman \cite{Ho1992} and Zagier \cite{Zag1994}. Quasi-symmetric functions specialize to MZVs by suitable evaluations through which the multiplication of two MZVs (stuffle product) comes from the quasi-shuffle product of two monomial quasi-symmetric functions~\cite{Ho,Ho1997}. In the context of Rota--Baxter algebras, a related construction can be found in the work of Cartier~\cite{Ca} on free commutative Rota--Baxter algebras. Extending this well-known connection, we obtain from these generalized quasi-symmetric functions a class of MZVs with weights (coefficients).

The paper is organized as follows. Section~\mref{sec:quasi} is devoted to a quick review of well-known results and definitions on Rota--Baxter algebras
and quasi-symmetric functions. In particular, as motivation, we recall the connection of symmetric functions with Rota's standard Rota--Baxter algebras and, as preparation, we recall the connection of mixable shuffle product, the primary part of the free commutative Rota-Baxter algebra, with monomial quasi-symmetric functions indexed by compositions.  To put the full free commutative nonunitary Rota--Baxter algebra in the context of quasi-symmetric functions, we introduce in Section~\mref{sec:nonu} a generalization of the quasi-symmetric functions, called \eqsyms, by generalizing monomial quasi-symmetric functions to be indexed by left weak compositions. These functions, called monomial \eqsyms form a basis of the \eqsyms. A relation among monomial \eqsyms of the same degree is established by applying this construction.
In Section \ref{Sec:baseseqsym0}, we apply the concept of a $P$-partition of Stanley to define left weak fundamental quasi-symmetric functions that form the second basis for \eqsyms.
Transformation formulas between these two bases are established.  The \eqsyms permit us to define a left weak MZV and its $q$-analog in Section \ref{sec:application}.
Particular relations are established for these types of MZVs.
Especially,
a generalization of the finite double shuffle relations to the product of two left weak MZVs is provided.

{\bf Convention.} Unless otherwise specified, an algebra in this paper is assumed to be commutative, defined over a unitary commutative ring $\bfk$. Let $\NN$ and $\PP$ denote the set of nonnegative and positive integers respectively.

\section{Background}
\mlabel{sec:quasi}

To provide background and motivation for our study, we recall in this section Rota's construction of standard Rota--Baxter algebras and their relationship with symmetric functions. This is followed by the concepts of mixable shuffle product, quasi-shuffle product and their relationship with quasi-symmetric functions and MZVs.

\subsection{Rota's standard Rota--Baxter algebras and symmetric functions}

For a fixed $\lambda\in \bfk$, a {\bf Rota--Baxter $\mathbf{k}$-algebra
of weight $\lambda$} is a pair $(R,P)$ consisting of an algebra $R$ and a $\mathbf{k}$-linear
operator $P: R\rightarrow R$ that satisfies the { Rota--Baxter equation}
\begin{equation}\label{rtequ}
P(x)P(y)=P(xP(y))+P(P(x)y)+\lambda P(xy)\qquad \text{ for all } x,y\in R.
\end{equation}
Then $P$ is called a {\bf Rota--Baxter operator (RBO) of weight $\lambda$}. If $R$ is only assumed to be a nonunitary
$\mathbf{k}$-algebra, we call $R$ a nonunitary Rota--Baxter $\mathbf{k}$-algebra of weight $\lambda$.

Throughout the rest of the paper we will assume that $\lambda=1$ and drop $\lambda$ from the notations.

A Rota--Baxter algebra homomorphism is an algebra homomorphism that is compatible with the Rota--Baxter operators.
Given a commutative $\mathbf {k}$-algebra $A$ that is not necessarily unitary,
the {\bf free commutative Rota--Baxter $\mathbf {k}$-algebra on $A$} is defined to be a \rbto
$\mathbf {k}$-algebra $(F(A),P_A)$ together with a $\mathbf{k}$-algebra homomorphism
$j_A: A\rightarrow F(A)$ with the property that, for any \rbto $\mathbf{k}$-algebra $(R, P)$ and any
$\mathbf{k}$-algebra homomorphism $f: A\rightarrow R$, there is a unique Rota--Baxter $\mathbf{k}$-algebra homomorphism
$\tilde{f}: (F(A),P_A)\rightarrow (R, P)$ such that $f=\tilde{f}\circ j_A$ as $\mathbf{k}$-algebra homomorphisms.

As a motivation, we recall the first construction of free commutative Rota--Baxter algebras given by Rota~\cite{Ro1,RS}, called the standard Rota--Baxter algebra, and their relationship with symmetric functions.

Let $X$ be a given set.
For each $x\in X$, let $t^{(x)}$ denote a sequence
$( t^{(x)}_1,\cdots, t^{(x)}_n,\cdots )$
of distinct symbols $t^{(x)}_n, n\geq 1,$ such that the sets $\{t^{(x_1)}_n\}_n$ and
$\{t^{(x_2)}_n\}_n$ are disjoint for $x_1\neq x_2$ in $X$.
Denote
\[ \overline{X} = \bigcup_{x\in X} \left\{t^{(x)}_n \mid n\geq 1\right\} \]
and let $\frakA (X)=\bfk[\overline{X}]^{ \PP}$ denote the algebra of sequences with entries in
the polynomial algebra ${\bfk}[\overline{X}]$.
The addition, multiplication and scalar multiplication by
${\bfk}[\overline{X}]$ in $\frakA(X)$ are defined componentwise.

Define
\[P_X^r: \frakA(X)\to \frakA(X), \quad (a_1,a_2,a_3,\cdots)\mapsto (0,a_1,a_1+a_2,a_1+a_2+a_3,\cdots).\]
So each entry of $P_X^r (a)$ for $a=(a_1,a_2,\cdots)$
is the sum of the previous entries of $a$.
Then $P_X^r$ defines a Rota--Baxter operator on $\frakA(X)$.
The {\bf standard Rota--Baxter algebra} on $X$ is the Rota--Baxter subalgebra
$\frakS(X)$ of $\frakA(X)$ generated
by the sequences
$t^{(x)}:=(t^{(x)}_1,\cdots,t^{(x)}_n,\cdots),\ x\in X.$
An important result of Rota is
\begin{theorem} $($\cite{Ro1,RS}$)$
$(\frakS(X),P_X^r)$ is the free commutative Rota--Baxter algebra on $X$.
\mlabel{thm:rota}
\end{theorem}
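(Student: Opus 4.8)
The plan is to verify the universal property directly, with $j_X\colon X\to\frakS(X)$, $x\mapsto t^{(x)}$. Since $\frakS(X)$ is by construction generated as a Rota--Baxter algebra by the sequences $t^{(x)}$, any Rota--Baxter homomorphism out of $\frakS(X)$ is determined by its values on these generators; so, given a commutative Rota--Baxter algebra $(R,P)$ and a map $g\colon X\to R$, a Rota--Baxter homomorphism $\tilde g$ with $\tilde g\circ j_X=g$ is \emph{unique} if it exists. All of the content lies in \emph{existence}: one must show that the only relations holding among the $t^{(x)}$ under multiplication and $P_X^r$ are those forced by commutativity and the weight-one Rota--Baxter identity.

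First I would produce a spanning set in normal form. Applying the Rota--Baxter relation $P(a)P(b)=P(aP(b))+P(P(a)b)+P(ab)$ repeatedly rewrites any product of two elements each carrying an outer $P_X^r$ as a sum of elements with a single outer $P_X^r$; iterating this reduction expresses every element of $\frakS(X)$ as a $\bfk$-linear combination of \emph{standard monomials}
\[ w=m_0\,P_X^r\!\bigl(m_1\,P_X^r\!\bigl(m_2\cdots P_X^r(m_k)\cdots\bigr)\bigr), \]
where each $m_j=\prod_{x}(t^{(x)})^{a^{(j)}_x}$ is a commutative monomial in the generators, with $m_1,\dots,m_k$ nonempty and $m_0$ possibly empty. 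One then defines $\tilde g$ on such a word by the obvious recipe, replacing each $t^{(x)}$ by $g(x)$, each product by the product of $R$, and each $P_X^r$ by $P$; granting well-definedness, the Rota--Baxter relation in $R$ makes it routine that $\tilde g$ is a Rota--Baxter homomorphism.

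The crux is to show that the standard monomials are $\bfk$-linearly independent in $\frakA(X)$, so that the recipe above is independent of the chosen reduction. Here I would compute components explicitly. Since $P_X^r(a)_n=\sum_{i<n}a_i$, an induction on the nesting depth gives
\[ w_n=(m_0)_n\sum_{n>i_1>i_2>\cdots>i_k\geq 1}(m_1)_{i_1}(m_2)_{i_2}\cdots(m_k)_{i_k}, \]
where $(m_j)_i=\prod_x(t^{(x)}_i)^{a^{(j)}_x}$ involves only the time-$i$ indeterminates. For $n$ larger than all nesting depths occurring, the diagonal choice $(i_1,\dots,i_k)=(n-1,\dots,n-k)$ contributes the monomial $\mu_w=(m_0)_n\prod_{j=1}^{k}(m_j)_{n-j}$, which places $m_0$ at time $n$ and each $m_j$ at the distinct time $n-j$. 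Because the $t^{(x)}_i$ are pairwise distinct commuting indeterminates over $\bfk$, reading off the exponent pattern at each time recovers the entire datum $(m_0,m_1,\dots,m_k)$, and one checks that $\mu_w$ occurs with coefficient one in $w_n$ and in no $w'_n$ for a standard monomial $w'\neq w$. Consequently a vanishing combination $\sum_w c_w w=0$ yields, upon extracting the coefficient of $\mu_{w^*}$ in the $n$-th component for each $w^*$ in the (finite) support, that every $c_{w^*}=0$.

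With linear independence in hand, $\tilde g$ is well defined, and surjectivity onto the Rota--Baxter subalgebra generated by $g(X)$, compatibility with $P$, and the identity $\tilde g\circ j_X=g$ are immediate from the construction. I expect the genuine difficulty to be bookkeeping rather than conceptual: verifying completeness of the normal-form reduction and then tracking the index tuples in the independence step. It is precisely the distinctness of the indeterminates $t^{(x)}_n$ across both $x$ and $n$ that lets the diagonal argument separate the standard monomials.
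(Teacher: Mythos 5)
Your strategy (reduce to normal forms via the Rota--Baxter rewriting, then prove linear independence componentwise in $\frakA(X)$ by isolating a distinguished monomial) has the right shape, but your normal form is wrong, and it fails exactly at the phenomenon this paper is built around. You require $m_1,\dots,m_k$ to be nonempty. However, $\frakS(X)$ contains elements in which $P_X^r$ is iterated with no generator in between: under the paper's unitary convention $1\in\frakS(X)$, so $P_X^r(1)=(0,1,2,\dots)\in\frakS(X)$; and even in the nonunitary reading, $(P_X^r)^2\bigl(t^{(x)}\bigr)=P_X^r\bigl(1\cdot P_X^r(t^{(x)})\bigr)\in\frakS(X)$. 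These elements are already in nested form, the Rota--Baxter identity --- which only rewrites \emph{products} $P(a)P(b)$ --- cannot eliminate the empty slot, and they do not lie in the span of your standard monomials. Concretely, writing $t=t^{(x)}$, one has $\bigl((P_X^r)^2 t\bigr)_n=\sum_{i\leq n-2}(n-1-i)\,t_i$, whose coefficient of $t_1$ is $n-2$, unbounded in $n$; whereas any finite combination of your degree-one standard monomials $1$, $t$, $P_X^r(t)$ has $t_1$-coefficient constant in $n$. So your spanning claim is false: the correct spanning set must allow interior factors $m_j=1$ (equivalently, zero exponents), exactly as in the basis of $\sha(x)$ in Eq.~\eqref{shaklambdax11}, where all $\alpha_i\geq 0$ are permitted.

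This is not repairable by merely enlarging the list of normal forms, because your independence argument also breaks once empty slots are allowed: an $m_j=1$ leaves no trace in the monomial, so distinct index tuples $(i_1,\dots,i_k)$ collapse onto the same monomial and the coefficients become $n$-dependent binomial multiplicities rather than $0$ or $1$ (e.g.\ the coefficient of $t_i$ in $\bigl((P_X^r)^2 t\bigr)_n$ is $n-1-i$, versus $1$ in $\bigl(P_X^r t\bigr)_n$); reading off exponent patterns at times $n,n-1,\dots,n-k$ can no longer recover the datum $(m_0,\dots,m_k)$. Separating such elements requires precisely the binomial-coefficient bookkeeping that the paper develops: zeros in a left weak composition record the empty slots (Lemma~\ref{lemmalphamon}), and independence is obtained by extracting coefficients of carefully chosen minimal monomials (Lemma~\ref{qsym0basis}), feeding into Theorem~\ref{freenonurba} and Theorem~\ref{nonunitarymainth}. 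As written, your diagonal argument establishes independence only for the subfamily with all parts positive --- i.e.\ the quasi-shuffle part $MS(x\bfk[x])\cong\qsym$ --- and not for all of $\frakS(X)$; the difference between the two is exactly the gap between $\qsym$ and $\EQSYM$ that motivates the paper, so the missing terms are the whole content, not bookkeeping.
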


Consider the special case of $X=\{x\}$. Let the sequence $(P^r_Xx)^{[n]}, n\geq 1,$ be defined by the recursion $(P^r_Xx)^{[1]}:=P^r_X(x)$ and $(P^r_Xx)^{[n+1]}:=P^r_X(x(P^r_Xx)^{[n]}), n\geq 1$.
Then
$$(P^r_Xx)^{[n]}=(0,e_n(x_1),e_n(x_1,x_2),e_n(x_1,x_2,x_3),\cdots)$$
where $e_n(x_1,\cdots,x_m)=\sum\limits_{1\leq i_1<i_2<\cdots<i_n\leq m}x_{i_1}x_{i_2}\cdots x_{i_n}$ is the elementary symmetric function
of degree $n$ in the variables $x_1,\cdots,x_m$ with the convention that $e_0(x_1,\cdots,x_m)=1$ and $e_n(x_1,\cdots,x_m)=0$ if $m<n$. Also by definition,
$$P^r_X(x^k)=(0,p_k(x_1),p_k(x_1,x_2),p_k(x_1,x_2,x_3),\cdots),$$
where $p_k(x_1,\cdots,x_m)=x_1^k+x_2^k+\cdots+x_m^k$ is the power
sum symmetric function of degree $k$ in the variables
$x_1,\cdots,x_m$. These two classes of symmetric functions are
related by Waring's formula
\begin{equation}
\exp\left (-\sum_{k=1}^\infty (-1)^kt^k p_k(x_1,\cdots,x_m)/k
    \right) = \sum_{n=0}^\infty e_n(x_1,\cdots,x_m)t^n
\text{ for all }\ m\geq 1.
\label{eq:waring}
\end{equation}
As mentioned in the introduction, Rota derived this formula as a special case of his algebraic formulation of Spitzer's identity. Cartier~\cite{Ca} gave the second construction of free commutative Rota--Baxter algebras.

\subsection{Mixable shuffle, quasi-shuffle and quasi-symmetric functions}

Another construction of free commutative \rbto $\mathbf{k}$-algebras was given by using the mixable shuffle algebra. The mixable shuffle algebra generated by a commutative (unitary or nonunitary) algebra $A$, denoted by
$MS(A)$, has its underlying module as that of the tensor algebra
\begin{align*}
T(A)=\bigoplus_{k\geq 0} A^{\otimes k}=\mathbf{k}\oplus A\oplus A^{\otimes2}\oplus\cdots,
\quad \text{ where\ } A^{\otimes k}= \underbrace{A\otimes A\otimes\cdots \otimes A}_{ k-{\text factors}},
\end{align*}
equipped  with the mixable  shuffle product $*$ defined as follows.

For pure tensors $\fraka =a_1\otimes \cdots \otimes a_m$ and $\frakb =b_1\otimes \cdots \otimes b_n$,
a {\bf shuffle} of $\fraka$ and $\frakb$ is a tensor
list from the factors of $\fraka$ and $\frakb$ in which the natural orders of the $a_i$'s and the $b_j$'s are preserved.
The {\bf shuffle product} of $\fraka$ and $\frakb$, denoted $\fraka\shap \frakb$, is the sum of all shuffles of $\fraka $ and $\frakb $.
For example, we have
$$a_1\shap (b_1\ot b_2)= a_1\ot b_1\ot b_2 + b_1\ot a_1\ot b_2 + b_1\ot b_2\ot a_1.$$

A {\bf mixable shuffle} of $\fraka$ and $\frakb$ is a shuffle of $\fraka$ and $\frakb$ where some (or none) of the pairs $a_i\ot b_j$ are replaced by $a_ib_j$. The {\bf mixable shuffle product} $\fraka \ast \frakb$ of $\fraka$ and $\frakb$ is the sum of all mixable shuffles.
For example
$$ a_1 \ast (b_1\ot b_2)=a_1\ot b_1\ot b_2 + b_1\ot a_1\ot b_2+b_1\ot b_2\ot a_1+a_1b_1\ot b_2+b_1\ot a_1b_2,$$
where $a_1b_1\ot b_2$ comes from $a_1\ot b_1\ot b_2$ by ``mixing" or merging $a_1\ot b_1$ and $b_1\ot a_1b_2$ comes from $b_1\ot a_1\ot b_2$ by ``mixing" or merging $a_1\ot b_2$. The last shuffle $b_1\ot b_2\ot a_1$ does not yield any mixed term since $a_1$ is not before any $b_j$, $j=1, 2$.

Similar to the recursive formula of the shuffle product
\begin{align*}
\fraka \shap\frakb:= a_1\otimes((a_2\otimes \cdots \otimes a_m) \shap b)+b_1\otimes(a \shap (b_2\otimes \cdots \otimes b_n)),
\quad 1\shap \fraka =\fraka \shap 1=\fraka,
\end{align*}
the mixable shuffle product can also be defined by the recursion
\begin{equation}\label{mixableshuprod}
\fraka \ast  \frakb =a_1\otimes((a_2\otimes \cdots \otimes a_m) \ast   \frakb)+b_1\otimes(\fraka \ast   (b_2\otimes \cdots \otimes b_n))
+a_1b_1\otimes((a_2\otimes \cdots \otimes a_m) \ast   (b_2\otimes \cdots \otimes b_n))
\end{equation}
with the convention that $1\ast\fraka =\fraka \ast 1=\fraka$. It is known as the {\bf quasi-shuffle}~\cite{Ho} of Hoffman.
Its equivalence to the mixable shuffle product was proved in~\cite{EG}. The mixable shuffle product equipped $MS(A)$ with a commutative algebra structure.

A {\bf composition} is a finite ordered list of positive integers. Given a composition
$\alpha=(\alpha_1,\cdots,\alpha_k)$, we define its {\bf weight} or {\bf size} to be $|\alpha|=\alpha_1+\cdots+\alpha_k$ and its {\bf length} to be
$\ell(\alpha):=k$. We call $\alpha_i, 1\leq i\leq k,$ the {\bf components} of $\alpha$. If $|\alpha|=n$, then we say $\alpha$ is a composition of $n$ and write $\alpha\models n$. For convenience we denote by $\emptyset$ the unique composition whose weight and length are $0$, called the {\bf empty} composition.
There is a natural bijection between compositions of size $n$ and subsets of $[n-1]=\{1,\cdots,n-1\}$ which maps a composition
$\alpha=(\alpha_1,\cdots,\alpha_k)\models n$ to the set of its partial sums, not including $n$ itself, that is,
$$
\set(\alpha):=\{\alpha_1,\alpha_1+\alpha_2,\cdots,\alpha_1+\alpha_2+\cdots+\alpha_{k-1}\}.
$$
Given compositions $\alpha$ and $\beta$ of $n$, we say that $\alpha$ is a {\bf refinement} of $\beta$ (or $\beta$ is a {\bf coarsening} of $\alpha$), denoted
$\alpha\preceq\beta$, if summing some consecutive components of $\alpha$ gives $\beta$. For example, $(1,2,1,4,1)\preceq(3,1,5)$.

Let $\mathfrak{X}=\{x_1,x_2,\cdots\}$ be a countably infinite totally ordered set of commuting variables.
Recall that a {\bf quasi-symmetric function} is a formal power series of finite degree $f\in \bfk[[\mathfrak{X}]]$ such that
for each composition $\alpha=(\alpha_1,\cdots,\alpha_k)$, all monomials $x^{\alpha_1}_{n_1}\cdots x^{\alpha_k}_{n_k}$
in $f$ with indices $1\leq n_1 < \cdots< n_k$ have the same coefficient.
The set of all quasi-symmetric functions, denoted $\qsym$, is a graded algebra with function multiplication, and with grading
$$
\qsym:=\bigoplus_{n\geq 0}\qsym_n,
$$
where $\qsym_0$ is spanned by $M_\emptyset=1$ and all other $\qsym_n$ are spanned by $\{M_{\alpha}\}_{\alpha\models n}$ where
$$M_{\alpha}:=\sum_{1\leq n_1<\cdots<n_k} x_{n_1}^{\alpha_1}\cdots x_{n_k}^{\alpha_k}$$
for a composition $\alpha=(\alpha_1,\cdots,\alpha_k)$ of $n$.
This basis is called the basis of {\bf monomial quasi-symmetric functions}.
A second basis for $\qsym$ consists of  {\bf fundamental quasi-symmetric functions}, also indexed by compositions. They are defined by
$F_\emptyset=1$ and
$$
F_{\alpha}=\sum_{\beta\preceq\alpha}M_{\beta}.
$$
For instance, $F_{(2,1)}=M_{(1,1,1)}+M_{(2,1)}$.
Moreover, $\qsym_n={\rm span}\{M_\alpha|\alpha\models n\}={\rm span}\{F_\alpha|\alpha\models n\}$.

The multiplication of two monomial quasi-symmetric functions is dictated by the quasi-shuffle product, or mixable shuffle product, on the indexing compositions.
This is a special case of the quasi-shuffle product defined in Eq.\ \eqref{mixableshuprod}.
In the special case when the algebra $A$ is the nonunitary algebra $x\bfk[x]$ of polynomials in one variable without the constant term, a basis of $MS(A)$ consists of pure tensors $x^{\alpha}:=x^{\alpha_1}\ot \cdots \ot x^{\alpha_k}$ where $\alpha=(\alpha_1,\cdots, \alpha_k)$ is a composition, together with the identity $1$. Through the natural linear bijection
$$\rho: \bfk\calc\to MS(A), \quad \alpha\mapsto x^\alpha,$$
on the space spanned by the set $\calc$ of compositions, the quasi-shuffle product $\ast$ on $MS(A)$ defines a product,
also denoted $\ast$, on $\bfk\calc$ by transport of structures. More precisely, if $\alpha=(\alpha_1,\cdots,\alpha_m)$,
$\beta=(\beta_1,\cdots,\beta_n)$, then we have the recursion
\begin{align}\label{compqsh}
\alpha\ast\beta=\left(\alpha_1,(\alpha_2,\cdots,\alpha_m)\ast\beta\right) \!+\!\left(\beta_1,\alpha\ast(\beta_2,\cdots,\beta_n)\right)
\!+\!\left(\alpha_1+\beta_1,(\alpha_2,\cdots,\alpha_m)\ast(\beta_2,\cdots,\beta_n)\right)
\end{align}
with the convention that $1\ast\alpha =\alpha \ast 1=\alpha$.

\begin{convention} To simplify notations in the rest of the paper, we will adapt the following convention. Let $\bfk X$ be the free module on a set $X$ and let $\cdot: X\times X \to \bfk X$ be a multiplication that extends to one on $\bfk X$. Then for $x, y\in X$, we have $x \cdot y = \sum\limits_{z\in X}c_z z$ which we denote by $\sum\limits_{z\in (x\cdot y)} z$. Thus $V$ is a $\bfk$-algebra whose multiplication is suppressed. Then a linear map $f: \bfk X\to V$ preserves the multiplications: $f(x)f(y)=f(x\cdot y)$ if and only if $f(x)f(y)=\sum\limits_{u\in (x\cdot y)} f(u)$. Note that here $u\in (x\cdot y)$ does not mean that $u$ is a basis element in $X$.
\label{con:index}
\end{convention}

In particular, we let $\alpha\ast\beta =\sum\limits_{\gamma\in (\alpha\ast\beta)} \gamma$ denote the linear combination resulting from $\alpha\ast \beta$.
By~\cite{Ho} we obtain an algebra isomorphism of $MS(A)$ with
the algebra $\qsym$
by sending $x^{\alpha}$ to $M_\alpha$.
Here the multiplication on \qsym\ is the natural multiplication of power series. Therefore, given two compositions $\alpha$ and $\beta$, we have
$$M_\alpha M_\beta=\sum_{\gamma\in (\alpha\ast \beta)} M_{\gamma},$$
as in Eq.~\eqref{compqsh}. In other words, let
\begin{equation}
\phi: \bfk \calc \to \qsym, \quad \alpha \mapsto M_\alpha, \alpha\in \calc.
\label{eq:phi}
\end{equation}
Then
$$M_\alpha M_\beta = \phi(\alpha\ast \beta).$$

Evaluating $x_n$ at $n^{-1}$, $M_\alpha$ gives the MZV
\begin{equation}\label{eq:mzv}
\zeta(\alpha):=\zeta(\alpha_1,\cdots,\alpha_k):=\sum_{1\leq n_1<\cdots<n_k} \frac{1}{n_1^{\alpha_1}\cdots n_k^{\alpha_k}},
\end{equation}
which converges for $\alpha_j\geq 1, 1\leq j\leq k$ and $\alpha_k\geq 2$. The quasi-shuffle relation or stuffle relation of MZVs is a direct consequence of the quasi-shuffle product on \qsym.

\section{Free nonunitary Rota--Baxter algebras and \eqsyms}
\label{sec:nonu}
In this section, we first outline a construction for free commutative nonunitary Rota--Baxter algebras on one generator by making use of the mixable shuffle algebras recalled in the last section. We then introduce a class of power series that on the one hand contains quasi-symmetric functions with similar properties and on the other hand is isomorphic to the free nonunitary Rota--Baxter algebra on one generator.

Let $A$ be a commutative unitary $\bfk$-algebra. Define
\begin{align*}
\sha_{\bfk}(A):=A\otimes MS(A)=\bigoplus_{k\geq1} A^{\otimes k}
\end{align*}
to be the tensor product of the algebras $A$ and $MS(A)$.
More precisely, for two pure tensors $a_0\otimes \fraka =a_0\otimes a_1\otimes \cdots \otimes a_m$
 and $b_0\otimes \frakb =b_0\otimes b_1\otimes \cdots \otimes b_n$, the {\bf augmented mixable shuffle product} $\diamond$ on $\sha_{\bfk}(A)$ is defined by
\begin{align}\label{defdiamond}
(a_0\otimes \fraka )\diamond (b_0\otimes \frakb ):=\begin{cases}
a_0b_0,&m=n=0,\\
(a_0b_0)\otimes \fraka, & m>0,n=0, \\
(a_0b_0)\otimes \frakb, & m=0,n>0,\\
(a_0b_0)\otimes(\fraka \ast   \frakb ),&m>0,n>0.
\end{cases}
\end{align}

\begin{theorem}\cite{G-K1}\label{freerbamsp}
The algebra $(\sha_{\bfk}(A),\diamond)$, with the linear
operator $P_A :\sha_{\bfk}(A)\rightarrow \sha_{\bfk}(A)$ sending $\fraka$ to $1\otimes \fraka$, is the free commutative \rbto algebra generated by $A$.
\end{theorem}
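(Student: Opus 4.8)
The plan is to establish the two requirements that characterize the free object: that the triple $(\sha_{\bfk}(A),\diamond,P_A)$ is itself a commutative Rota--Baxter $\bfk$-algebra of weight $1$, and that it enjoys the universal property relative to the canonical map $j_A\colon A\to\sha_{\bfk}(A)$ which embeds $A$ as the summand $A^{\otimes 1}$. For the first requirement, commutativity and associativity of $\diamond$ follow by a short case analysis from the corresponding properties of the mixable shuffle product $\ast$ on $MS(A)$ recorded above, and $1_A\in A^{\otimes 1}$ serves as the unit since $1_A\diamond(b_0\otimes\frakb)=b_0\otimes\frakb$. That $P_A$ satisfies the weight-$1$ Rota--Baxter identity \eqref{rtequ} I would check on pure tensors: expanding $P_A(\frakx)\diamond P_A(\fraky)=1\otimes(\frakx\ast\fraky)$ through the recursion \eqref{mixableshuprod} splits the product into the three summands $P_A(\frakx\diamond P_A(\fraky))$, $P_A(P_A(\frakx)\diamond\fraky)$ and $P_A(\frakx\diamond\fraky)$.

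The crux of the universal property is the decomposition
\begin{equation*}
a_0\otimes a_1\otimes\cdots\otimes a_m=j_A(a_0)\diamond P_A(a_1\otimes\cdots\otimes a_m),
\end{equation*}
which is immediate from the clause $m=0,\ n>0$ of \eqref{defdiamond}. Iterating it exhibits every pure tensor as an alternating nest of multiplications by $j_A(A)$ and applications of $P_A$, so $j_A(A)$ generates $\sha_{\bfk}(A)$ as a Rota--Baxter algebra. Hence, given a Rota--Baxter algebra $(R,P)$ and an algebra homomorphism $f\colon A\to R$, any Rota--Baxter homomorphism $\tilde{f}$ with $\tilde{f}\circ j_A=f$ must obey
\begin{equation*}
\tilde{f}(a_0\otimes a_1\otimes\cdots\otimes a_m)=f(a_0)\,P\!\left(f(a_1)\,P\!\left(\cdots P(f(a_m))\cdots\right)\right),
\end{equation*}
which settles uniqueness.

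For existence I would take this formula as the definition of $\tilde{f}$, equivalently the recursion $\tilde{f}(a_0\otimes\fraka)=f(a_0)\,P(\tilde{f}(\fraka))$ with base case $\tilde{f}(a_0)=f(a_0)$. Well-definedness and $\bfk$-linearity are clear because the right-hand side is $\bfk$-multilinear in $a_0,\ldots,a_m$, and $\tilde{f}\circ P_A=P\circ\tilde{f}$ is built into the definition of $P_A$. The one genuine point is multiplicativity $\tilde{f}(\frakx\diamond\fraky)=\tilde{f}(\frakx)\tilde{f}(\fraky)$. When a tail is empty this is a direct check against the first three clauses of \eqref{defdiamond}; when $\frakx=a_0\otimes\fraka$ and $\fraky=b_0\otimes\frakb$ with $\fraka,\frakb$ nonempty, then $\frakx\diamond\fraky=(a_0b_0)\otimes(\fraka\ast\frakb)$ and, using that $f$ is multiplicative, the identity reduces to the auxiliary claim
\begin{equation*}
P(\tilde{f}(\fraka\ast\frakb))=P(\tilde{f}(\fraka))\,P(\tilde{f}(\frakb)),
\end{equation*}
which I would prove by induction on the total length of $\fraka$ and $\frakb$. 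Expanding $\fraka\ast\frakb$ by \eqref{mixableshuprod} and applying the induction hypothesis yields the intermediate identity $\tilde{f}(\fraka\ast\frakb)=\tilde{f}(\fraka)P(\tilde{f}(\frakb))+P(\tilde{f}(\fraka))\tilde{f}(\frakb)+\tilde{f}(\fraka)\tilde{f}(\frakb)$; applying $P$ to both sides and invoking the Rota--Baxter equation \eqref{rtequ} collapses the three terms on the right into $P(\tilde{f}(\fraka))P(\tilde{f}(\frakb))$, as claimed.

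I expect the inductive verification of this auxiliary claim to be the main obstacle. The subtlety is that multiplicativity cannot be matched termwise against the three branches of the mixable shuffle recursion directly; one must first pass to the $P$-wrapped identity and let the Rota--Baxter equation perform the collapse, all while carefully treating the length-one and empty-tail boundary cases in \eqref{defdiamond} and \eqref{mixableshuprod}. By contrast the algebra axioms, the decomposition identity, and the uniqueness argument are essentially formal. This result is due to Guo and Keigher~\cite{G-K1}, so the paper may invoke it directly, but the argument above is the one I would reconstruct.
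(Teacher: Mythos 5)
Your proposal is correct, and since the paper itself gives no proof of this theorem---it is quoted directly from \cite{G-K1}---the only meaningful comparison is with the original Guo--Keigher argument, which your reconstruction follows essentially verbatim: verify the weight-$1$ Rota--Baxter identity for $P_A$ via the quasi-shuffle recursion, prove uniqueness from the decomposition $a_0\otimes \fraka = j_A(a_0)\diamond P_A(\fraka)$, and establish multiplicativity of $\tilde{f}$ by reducing to the $P$-wrapped identity $P(\tilde{f}(\fraka\ast\frakb))=P(\tilde{f}(\fraka))P(\tilde{f}(\frakb))$, proved by induction on total tensor length with the Rota--Baxter equation collapsing the three terms. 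You have also correctly identified the genuinely delicate point (that multiplicativity cannot be matched termwise against the three branches of the recursion but must pass through the $P$-wrapped form), so no gaps remain.
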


Now let $A$ be a commutative nonunitary $\bfk$-algebra and let $\tilde{A}=\bfk\oplus A$ be the unitarization of $A$. Define
\begin{align*}
\sha_{\bfk}(A)^0:=\bigoplus_{k\geq 0} (\tilde{A}^{\otimes k}\otimes A)
\end{align*}
with the convention that $\tilde{A}^{\otimes 0}=\bfk$ and hence $\tilde{A}^{\otimes 0}\otimes A=A$.
Then $\sha_{\bfk}(A)^0$ is a $\bfk$-submodule of $\sha_{\bfk}(\tilde{A})$, additively spanned by pure tensors of the form
\begin{align*}
a_0\otimes\cdots\otimes a_n,\qquad a_i\in \tilde{A},\ 0\leq i\leq n-1,\ a_n\in A.
\end{align*}
With the restriction of $P_{\tilde{A}}$, denoted by $P_A$, $\sha_{\bfk}(A)^0$ is a subobject of $\sha_{\bfk}(\tilde{A})$ in the category of
commutative nonunitary \rbto algebras. By \cite[Proposition 2.6]{G-K2}, $(\sha_{\bfk}(A)^0, P_A)$ is the free
commutative nonunitary \rbto algebra generated by $A$ characterized by its usual universal property.

We now consider the special case when $A =\bfk[x]$ or $A=x\bfk[x]$. Then Theorem \ref{freerbamsp} can be restated as
\begin{theorem}
The $\bfk$-module
\begin{align}\label{shaklambdax11}
\sha(x):=\sha_{\bfk}\left(\bfk[x]\right)=\bigoplus_{k\geq 0}  \bfk[x]^{\otimes (k+1)}
=\bigoplus_{\alpha_i\geq0, 0\leq i\leq k}\bfk x^{\alpha_0}\otimes\cdots\otimes x^{\alpha_k},
\end{align}
with the product in Eq.~\eqref{defdiamond} and the operator $P_x: \fraka\mapsto 1\ot \fraka$, is the free unitary Rota--Baxter algebra on $x$. The restriction of the product and operator on the submodule
\begin{align}\label{shaklambdax10}
\sha(x)^0:=\sha_{\bfk}(x\bfk[x])^0=\bigoplus_{k\geq 0} \left(\bfk[x]^{\otimes k}\otimes x\bfk[x]\right)
=\bigoplus_{\alpha_i\geq0, 0\leq i\leq k-1,\alpha_k\geq 1}\bfk x^{\alpha_0}\otimes\cdots\otimes x^{\alpha_k}
\end{align}
gives the free nonunitary \rbto algebra on $x$.
\mlabel{freenonurba}
\end{theorem}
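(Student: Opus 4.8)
The plan is to derive this statement from the general constructions recalled above, by specializing them to $A=\bfk[x]$ and $A=x\bfk[x]$ and then upgrading "free on the algebra $A$" to "free on the single generator $x$" via a composition of universal properties, after which only the explicit tensor-module bookkeeping remains.

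First I would treat the unitary case. Applying Theorem~\ref{freerbamsp} with $A=\bfk[x]$ shows that $\left(\sha_\bfk(\bfk[x]),\diamond\right)$ together with $P_x$ is the free commutative Rota--Baxter algebra generated by the algebra $\bfk[x]$. To reidentify this as the free \emph{unitary} Rota--Baxter algebra on the one generator $x$, I would use that $\bfk[x]$ is itself the free commutative unitary $\bfk$-algebra on the one-element set $\{x\}$. Then for any commutative unitary Rota--Baxter algebra $(R,P)$ and any $r\in R$, the assignment $x\mapsto r$ extends uniquely to a $\bfk$-algebra homomorphism $\bfk[x]\to R$, which by Theorem~\ref{freerbamsp} extends uniquely to a Rota--Baxter homomorphism $\sha_\bfk(\bfk[x])\to R$; conversely every Rota--Baxter homomorphism out of $\sha_\bfk(\bfk[x])$ sending $x$ to $r$ restricts to such an algebra homomorphism, and both extensions are unique. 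This composite is exactly the universal property defining the free unitary Rota--Baxter algebra on $x$. Abstractly this is just the composition of two left adjoints, the free commutative algebra functor followed by the free Rota--Baxter algebra functor.

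For the nonunitary case I would argue in complete parallel, using the version recorded just before the theorem: by \cite[Proposition~2.6]{G-K2}, $\left(\sha_\bfk(A)^0,P_A\right)$ is the free commutative nonunitary Rota--Baxter algebra generated by the nonunitary algebra $A$. Taking $A=x\bfk[x]$, whose unitarization is $\tilde A=\bfk\oplus x\bfk[x]=\bfk[x]$, and using that $x\bfk[x]$ is the free commutative nonunitary $\bfk$-algebra on $\{x\}$, the identical composition-of-universal-properties argument shows that $\sha(x)^0$ is the free nonunitary Rota--Baxter algebra on $x$. It then remains to record the explicit decompositions. For the unitary case this is immediate from $\sha_\bfk(\bfk[x])=\bfk[x]\otimes MS(\bfk[x])=\bigoplus_{k\geq 0}\bfk[x]^{\otimes(k+1)}$, whose $\bfk$-basis consists of the pure tensors $x^{\alpha_0}\otimes\cdots\otimes x^{\alpha_k}$ with all $\alpha_i\geq 0$, giving \eqref{shaklambdax11}. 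For the nonunitary case, substituting $\tilde A=\bfk[x]$ and $A=x\bfk[x]$ into $\sha_\bfk(A)^0=\bigoplus_{k\geq 0}(\tilde A^{\otimes k}\otimes A)$ yields $\bigoplus_{k\geq 0}\left(\bfk[x]^{\otimes k}\otimes x\bfk[x]\right)$; since the final tensor factor lies in $x\bfk[x]$ it has no constant term, forcing $\alpha_k\geq 1$ while $\alpha_0,\dots,\alpha_{k-1}\geq 0$, which is \eqref{shaklambdax10}.

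The only genuine point requiring care, and hence the main obstacle, is the passage from "free on the algebra $A$" to "free on the generator $x$": one must verify that the one-element set $\{x\}$ generates $\bfk[x]$ (resp.\ $x\bfk[x]$) freely as a commutative unitary (resp.\ nonunitary) algebra, and that composing this freeness with the Rota--Baxter freeness of Theorem~\ref{freerbamsp} (resp.\ \cite[Proposition~2.6]{G-K2}) reproduces exactly the universal property of the free (non)unitary Rota--Baxter algebra on one generator. Everything else is routine manipulation of tensor factors; the one place the argument could go astray is keeping the unitary/nonunitary distinction consistent across both adjunctions, in particular remembering that the unitarization of $x\bfk[x]$ is $\bfk[x]$.
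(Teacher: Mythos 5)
Your proposal is correct and takes essentially the same route as the paper: the paper states Theorem~\ref{freenonurba} as an immediate restatement of Theorem~\ref{freerbamsp} and \cite[Proposition~2.6]{G-K2} specialized to $A=\bfk[x]$ and $A=x\bfk[x]$, giving no separate proof. The composition-of-universal-properties step you spell out (freeness of $\bfk[x]$, resp.\ $x\bfk[x]$, as the commutative unitary, resp.\ nonunitary, algebra on $\{x\}$, with the unitarization of $x\bfk[x]$ being $\bfk[x]$) is precisely the standard argument the paper leaves implicit, and your tensor-module bookkeeping matches Eqs.~\eqref{shaklambdax11} and \eqref{shaklambdax10}.
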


As recalled in Section~\mref{sec:quasi}, the mixable shuffle algebra
$$MS(x\bfk[x]) =\bfk 1\bigoplus \bfk \{x^{i_1}\ot \cdots \ot x^{i_k}\,|\, (i_1,\cdots,i_k)\in \PP^k, k\geq 1\}$$
is isomorphic to the algebra of quasi-symmetric functions \qsym\ in the formal power series $\bfk[[\mathfrak{X}]]$.
Let
$$MS(x\bfk[x])^0:= \bfk\{x^{i_1}\ot \cdots \ot x^{i_k}\,|\, (i_1,\cdots,i_k)\in \NN^{k-1}\times \PP,k\geq 1\}.$$
Then, from the above theorem, we have
$\sha(x)^0=MS(x\bfk[x])^0$
which is larger than $MS(x\bfk[x])$.

In order to realize $\sha(x)^0$ as formal power series, we first need to do so for $MS(x\bfk[x])^0$.
To this end, we first generalize the definition of quasi-symmetric functions, by generalizing the concept of compositions.

A {\bf weak composition} \cite{Sta} $\alpha=(\alpha_1,\cdots,\alpha_k)$ of a nonnegative integer $n$, denoted $\alpha\wcomp n$, is a sequence of nonnegative integers whose sum is $n$. We call the $\alpha_i$ the {\bf components} of $\alpha$ and $\ell(\alpha):=k$ the {\bf length} of $\alpha$.
If $\alpha\wcomp n$, then we call $n$ the {\bf size} of $\alpha$, denoted $|\alpha|$.

\begin{defn}
A {\bf left weak composition}\footnote{Relating weak compositions to Rota-Baxter algebras needs a different strategy and will be considered in a separate work.}
is a weak composition that ends with a positive component, that is, if $\alpha=(\alpha_1,\cdots,\alpha_k)$ with $\alpha_k$ being a positive integer. Let $\LWC$ denote the set of left weak compositions.
\end{defn}

For a left weak composition $\alpha$, it is convenient to group blocks of zeros together and denote
\begin{align*}
\alpha=(0^{i_1}, s_1,0^{i_2}, s_2,\cdots,0^{i_{j}},s_{j}),\quad s_p\geq1, i_{p}\geq0, p=1,2,\cdots,j,
\end{align*}
where $0^i$ means a string of $i$ zero components. We also write
$\alpha=0^{i_1}s_10^{i_2}s_2\cdots0^{i_j}s_j$ for convenience.

Recall that QSym can be equivalently defined to be the subspace of formal power series in the countable variables $\frakX:=\{x_i\,|\, i\geq 1\}$ spanned by the monomial quasi-symmetric functions $M_\alpha$, as $\alpha$ run through all the compositions.
We extend this notion for left weak compositions as follows. 

\begin{defn}
we call a formal power series of finite degree $f\in \bfk[[\mathfrak{X}]]$  a {\bf left weak composition quasi-symmetric function}, or simply an {\bf \lwc quasi-symmetric function}, if
$f$ is a linear combination of the following formal power series \begin{equation}
M_\alpha:=\sum_{1\leq n_1<\cdots<n_k} x_{n_1}^{\alpha_1}\cdots x_{n_k}^{\alpha_k},
\mlabel{eq:eqsym}
\end{equation}
indexed by left weak compositions $\alpha=(\alpha_1,\cdots,\alpha_k)$.
We call the $M_\alpha$ the
{\bf \lwc monomial quasi-symmetric function} and denote $\EQSYM$ the space of \eqsyms.
\end{defn}

With the convention that $\binom{m}{n}=0$ if $m<n$, we have the following formula for an \lwc monomial quasi-symmetric function $M_{\alpha}$.
\begin{lemma}\label{lemmalphamon}
Let $\alpha=(0^{i_1}, s_1,0^{i_2}, s_2,\cdots,0^{i_{k}},s_{k})$ be a left weak composition, where $s_p\geq1, i_{p}\geq0, p=1,2,\cdots,k$. Then
\begin{align}\label{Malphas}
M_\alpha=\sum_{1\leq n_1<n_2<\cdots<n_k}\binom{n_1-1}{i_1}\binom{n_2-n_1-1}{i_2}\cdots\binom{n_k-n_{k-1}-1}{i_k}
x_{n_1}^{s_1}x_{n_2}^{s_2}\cdots x_{n_k}^{s_k}.
\end{align}
\end{lemma}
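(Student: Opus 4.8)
The plan is to expand the defining sum in Eq.~\eqref{eq:eqsym} and regroup it according to the indices that carry the \emph{positive} exponents, so that the claim reduces to a purely combinatorial count of the remaining zero-exponent indices. Writing out $\alpha=(0^{i_1},s_1,\dots,0^{i_k},s_k)$, its length is $\ell=k+\sum_{p=1}^k i_p$, and Eq.~\eqref{eq:eqsym} reads $M_\alpha=\sum_{1\le m_1<\cdots<m_\ell} x_{m_1}^{\alpha_1}\cdots x_{m_\ell}^{\alpha_\ell}$, where $\alpha_j$ is the $j$-th exponent read off from the displayed form of $\alpha$. First I would observe that every factor coming from a zero component is $x_{m_j}^0=1$, so each monomial in this sum collapses to $x_{n_1}^{s_1}\cdots x_{n_k}^{s_k}$, where $n_p$ denotes the index $m_j$ occupying the position of $s_p$. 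Thus $M_\alpha$ is a power series in the monomials $x_{n_1}^{s_1}\cdots x_{n_k}^{s_k}$ with $n_1<\cdots<n_k$, and it remains to compute the coefficient of each such monomial.

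Next I would fix a strictly increasing tuple $n_1<\cdots<n_k$ and count the tuples $m_1<\cdots<m_\ell$ whose positive-exponent positions carry exactly $n_1,\dots,n_k$. The key point is that specifying such an $m$-tuple is the same as independently filling the $k$ blocks of zeros: the $i_1$ indices preceding $s_1$ must be a strictly increasing sequence drawn from $\{1,\dots,n_1-1\}$, and for $2\le p\le k$ the $i_p$ indices lying in the block before $s_p$ must be drawn, strictly increasing, from the open interval $\{n_{p-1}+1,\dots,n_p-1\}$, which contains $n_p-n_{p-1}-1$ integers. Since the blocks occupy disjoint ranges and the global monotonicity $m_1<\cdots<m_\ell$ is equivalent to monotonicity within each block together with the bracketing inequalities $n_{p-1}<(\text{block }p)<n_p$, these choices are independent. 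Hence the number of admissible $m$-tuples is $\binom{n_1-1}{i_1}\binom{n_2-n_1-1}{i_2}\cdots\binom{n_k-n_{k-1}-1}{i_k}$ (with $n_0:=0$), and summing over all $n_1<\cdots<n_k$ yields Eq.~\eqref{Malphas}. The convention $\binom{m}{n}=0$ for $m<n$ is exactly what records that a block cannot be filled when its interval is too short.

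The step requiring the most care---and the one where the defining property of a left weak composition is used---is the verification that the decomposition into blocks is a genuine bijection with no leftover indices. Because $\alpha$ \emph{ends} in the positive component $s_k$, there is no trailing block of zeros after $n_k$; consequently every index $m_j$ is accounted for either as some $n_p$ or as a filler in one of the bounded intervals above, and in particular no interval is unbounded. This is precisely what keeps each coefficient finite and makes the product of binomials terminate; for a general weak composition ending in zeros the trailing filler block would range over an infinite set and the formula would fail. Establishing this bijection cleanly is the main obstacle, after which the counting is routine.
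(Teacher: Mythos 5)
Your proof is correct and follows essentially the same route as the paper's: both expand the defining sum of Eq.~\eqref{eq:eqsym} over all $\ell(\alpha)$ indices and then count, for each fixed $n_1<\cdots<n_k$ with $n_0=0$, the $\binom{n_p-n_{p-1}-1}{i_p}$ independent choices of zero-exponent indices in each gap, with the convention $\binom{m}{n}=0$ for $m<n$ handling short intervals. Your closing observation that the absence of a trailing zero block (i.e.\ the left weak composition condition) is what keeps every coefficient finite is a correct and worthwhile point that the paper leaves implicit.
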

\begin{proof}
By Eq. \eqref{eq:eqsym}, we know
\begin{align*}
M_\alpha=\sum_{1\leq n_{11}<\cdots<n_{1i_1}<n_1<\cdots<n_{k1}<\cdots<n_{ki_{k}}<n_k}
x_{n_{11}}^0\cdots x_{n_{1i_1}}^0 x_{n_1}^{s_1}x_{n_{21}}^0\cdots x_{n_{2i_2}}^0 x_{n_2}^{s_2}\cdots x_{n_{k1}}^0\cdots x_{n_{ki_k}}^0 x_{n_k}^{s_k}.
\end{align*}
For fixed $n_{p-1}$ and $n_p$, $p=1,\cdots, k$, with the convention that $n_0=0$, one can choose $n_{p1}, \cdots, n_{pi_p}$ satisfying $n_{p-1}<n_{p1}<\cdots<n_{p\,i_p}<n_{p}$
in $\binom{n_{p}-n_{p-1}-1}{i_{p}}$ ways. Note that $\binom{m}{n}=0$ if $m<n$ and $x^0=1$ for all indeterminants $x$. Then Eq.~\eqref{Malphas} follows.
\end{proof}
For example,
\begin{align*}
M_{(0^2,3)}=\sum_{1\leq n_1<n_2<n_3}x_{n_1}^0x_{n_2}^0x_{n_3}^3=\sum_{n\geq1}\binom{n-1}{2}x_{n}^3=\sum_{n\geq3}\binom{n-1}{2}x_{n}^3.
\end{align*}
Clearly, $M_\alpha$ is a quasi-symmetric function if and only if $i_{p}= 0$ for all $1\leq {p}\leq k$.

To make the parallel to monomial quasi-symmetric functions complete, we have
\begin{lemma}\label{qsym0basis}
The set $\{M_\alpha|\alpha\in \NN^{k-1}\times \PP, k\geq1\}$ is a $\bfk$-basis for $\EQSYM$.
\end{lemma}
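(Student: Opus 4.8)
The plan is to note that spanning is immediate and to devote essentially all the work to linear independence. By definition $\EQSYM$ is the $\bfk$-span of the $M_\alpha$ as $\alpha$ runs over all left weak compositions, and every left weak composition lies in $\NN^{k-1}\times\PP$ for a unique $k\geq 1$; so the given set spans by construction, and the real content is that the $M_\alpha$ are $\bfk$-linearly independent.

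For independence I would first record a support decomposition. Writing $\alpha=(0^{i_1},s_1,\dots,0^{i_k},s_k)$, Lemma \ref{lemmalphamon} (formula \eqref{Malphas}) shows that $M_\alpha$ is supported entirely on monomials of the shape $x_{n_1}^{s_1}\cdots x_{n_k}^{s_k}$ with $n_1<\dots<n_k$, whose sequence of positive exponents is exactly $\mathbf{s}:=(s_1,\dots,s_k)$. Since such a monomial determines $\mathbf{s}$ uniquely (read the positive exponents off in increasing order of index), the supports of $M_\alpha$ and $M_{\alpha'}$ are disjoint whenever their positive parts differ. Hence in any finite relation $\sum_\alpha c_\alpha M_\alpha=0$ the terms segregate by positive part, and it suffices to prove independence within a single block, namely for the family $\{M_{(0^{i_1},s_1,\dots,0^{i_k},s_k)}\mid(i_1,\dots,i_k)\in\NN^k\}$ with $\mathbf{s}$ fixed.

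Inside such a block I would extract the coefficient of $x_{n_1}^{s_1}\cdots x_{n_k}^{s_k}$ from \eqref{Malphas}. Using the substitution $m_p:=n_p-n_{p-1}-1$ (with $n_0:=0$), which is a bijection between the strictly increasing tuples $1\leq n_1<\dots<n_k$ and all of $\NN^k$, a vanishing combination becomes the system
\[
\sum_{(i_1,\dots,i_k)\in\NN^k} c_{i_1,\dots,i_k}\binom{m_1}{i_1}\cdots\binom{m_k}{i_k}=0 \quad\text{for all } (m_1,\dots,m_k)\in\NN^k.
\]
It then remains to show that the functions $(m_1,\dots,m_k)\mapsto\prod_{p=1}^k\binom{m_p}{i_p}$, indexed by $(i_1,\dots,i_k)\in\NN^k$, are $\bfk$-linearly independent.

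This last point is where I expect the only real difficulty, since $\bfk$ is merely a commutative ring and the usual degree argument over a field is unavailable. The fix is to use that the single-variable evaluation matrix $\bigl(\binom{m}{i}\bigr)_{m,i}$ is lower unitriangular over $\ZZ$, because $\binom{m}{i}=0$ for $i>m$ and $\binom{m}{m}=1$; restricted to the finite index range occurring in a given relation it is an integer matrix of determinant $1$, hence invertible over any $\bfk$. The multivariate family is the corresponding Kronecker product, so its finite evaluation matrix is a product of unitriangular matrices and is again invertible over $\bfk$, forcing every $c_{i_1,\dots,i_k}=0$. Combined with the support decomposition this yields $c_\alpha=0$ for all $\alpha$, establishing independence and hence the basis claim. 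The main obstacle, as indicated, is precisely securing linear independence over an arbitrary base ring rather than a field, and the unitriangularity of the binomial matrix is exactly what makes this go through.
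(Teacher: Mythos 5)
Your proof is correct, but it takes a genuinely different route from the paper's. The paper argues by contradiction via a single extremal coefficient extraction: given a purported nontrivial relation $c_1M_{\alpha_1}+\cdots+c_nM_{\alpha_n}=0$ among distinct left weak compositions, it picks $\alpha_1$ of minimal length $r_1$ and reads off the coefficient of the one ``tight'' monomial $x_{i_{11}+1}^{s_{11}}x_{i_{11}+i_{12}+2}^{s_{12}}\cdots x_{r_1}^{s_{1k_1}}$ of $\alpha_1$; by Lemma \ref{lemmalphamon} this coefficient is $1$ in $M_{\alpha_1}$, while in any other $M_{\alpha_j}$ it vanishes (the monomial forces the positive parts to match, nonvanishing of the binomials then forces $i_{jp}\leq i_{1p}$ for all $p$, while $\ell(\alpha_j)\geq r_1$ forces the opposite inequality on the sums, whence $\alpha_j=\alpha_1$), so the relation yields $c_1=0$, a contradiction. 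You instead proceed globally: first segregating the relation by the positive part via disjointness of supports, then inverting the entire evaluation system at once using the substitution $m_p=n_p-n_{p-1}-1$ and the unitriangularity over $\ZZ$ of the binomial matrix $\bigl(\binom{m}{i}\bigr)$ and its Kronecker powers. Both arguments ultimately rest on the same triangularity of binomial coefficients, namely $\binom{m}{i}=0$ for $i>m$ and $\binom{m}{m}=1$, and both are valid over an arbitrary commutative ring $\bfk$ --- note that the paper's extraction already handles this silently, since the isolated coefficient of the relation is exactly $c_1\cdot 1$ and no division is needed, so the concern you flag as the ``only real difficulty'' is present in both but never an obstacle. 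What your version buys is a sharper structural conclusion: it kills all coefficients simultaneously rather than isolating one at a time, and it exhibits each fixed-positive-part block of $\{M_\alpha\}$ as an explicit invertible (unitriangular, determinant $1$) integer change of coordinates from the monomial coefficients, which in particular shows the evaluation map on each block is injective over any $\bfk$. The paper's proof is shorter; yours is more systematic and makes the base-ring independence explicit.
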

\begin{proof}
It suffices to show that $M_\alpha$ for any finitely many distinct $\alpha$ are linear independent.
Let $\alpha_j=(0^{i_{j1}},s_{j1},0^{i_{j2}},s_{j2},\cdots,0^{i_{jk_j}},s_{jk_j})$ be distinct left weak compositions,
$j=1,\cdots,n$. Assume that $c_1M_{\alpha_1}+\cdots+c_nM_{\alpha_n}=0$ is a non-zero combination with $c_j\neq 0$,  $j=1,\cdots,n$.
Let $r_j=\ell(\alpha_j)$, $j=1,\cdots,n$.
By permuting $r_1,\cdots,r_n$ if necessary, we may assume  $r_1=\min\{r_1,\cdots,r_{n}\}$.
By Lemma \ref{lemmalphamon}, the coefficient of
$x_{i_{j1}+1}^{s_{j1}}x_{i_{j1}+i_{j2}+2}^{s_{j2}}\cdots x_{r_j}^{s_{jk_j}}$ in $M_{\alpha_j}$ is $1$.
Since $\alpha_j$ are different from each other, the coefficient of
$x_{i_{11}+1}^{s_{11}}x_{i_{11}+i_{12}+2}^{s_{12}}\cdots x_{r_1}^{s_{1k_1}}$ in
$M_j$ is $\delta_{1j}$, where $\delta_{ij}$ is the Kronecker delta.
Thus,  the coefficient of the above monomial in
$c_1M_{\alpha_1}+\cdots+c_nM_{\alpha_n}$ is $c_1$ and hence $c_1$ must be $0$, a contradiction.
This completes the proof.
\end{proof}

Analogous to Eq.~\eqref{compqsh} we define the quasi-shuffle product, also denoted by $\ast$, for two left weak compositions, and denote it by $\alpha \ast \beta=\sum\limits_{\gamma\in (\alpha\ast\beta)}\gamma$ with the notation in Convention~\ref{con:index}.
Then we have the following multiplication formula for \lwc monomial quasi-symmetric functions.

\begin{theorem}
Let $\alpha$ and $\beta$ be left weak compositions. Then
 $M_\alpha M_\beta=\sum\limits_{\gamma\in (\alpha\ast\beta)} M_{\gamma}$ with the notation in Convention~\ref{con:index}. In other words, defining
\begin{equation}
 \phi: \bfk \LWC \to \EQSYM, \quad \alpha\mapsto M_\alpha,
 \mlabel{eq:ephi}
 \end{equation}
extending the linear bijection $\phi$ in Eq.~(\ref{eq:phi}), then $M_\alpha M_\beta = \phi(\alpha\ast \beta).$
\label{thmmonproduct}
\end{theorem}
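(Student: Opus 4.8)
The plan is to prove the identity directly, by expanding each factor as a formal sum indexed by strictly increasing index functions, multiplying, and then reorganizing the resulting double sum according to the pattern in which the two index sequences interleave and overlap. The crucial feature distinguishing the left weak setting from the classical one is that this reorganization must be carried out at the level of the index functions (the ``slots''), \emph{before} the relation $x^0=1$ collapses the monomials. Concretely, writing $\alpha=(\alpha_1,\dots,\alpha_m)$ and $\beta=(\beta_1,\dots,\beta_n)$, I would set $M_\alpha=\sum_g\prod_{i=1}^m x_{g(i)}^{\alpha_i}$ with the sum over all strictly increasing $g\colon\{1,\dots,m\}\to\PP$, and similarly $M_\beta=\sum_h\prod_{j=1}^n x_{h(j)}^{\beta_j}$ over strictly increasing $h\colon\{1,\dots,n\}\to\PP$, so that $M_\alpha M_\beta=\sum_{g,h}\prod_i x_{g(i)}^{\alpha_i}\prod_j x_{h(j)}^{\beta_j}$. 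Since the coefficient of each fixed monomial is a finite sum on both sides, the rearrangements below are legitimate coefficientwise.

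To each pair $(g,h)$ I would attach its \emph{merge pattern}: list the union of the images in increasing order as $q_1<\dots<q_\ell$ and record, for each $q_r$, whether it lies in the image of $g$ only, of $h$ only, or of both. Because $g$ and $h$ are strictly increasing, the slots of type $g$ or both are matched in order with $1,\dots,m$ and those of type $h$ or both with $1,\dots,n$; hence a merge pattern is exactly one of the terms produced by the recursion defining $\alpha\ast\beta$, counted with its multiplicity in the sense of Convention~\ref{con:index}. The exponent accumulated at $q_r$ is $\alpha_i$, $\beta_j$, or $\alpha_i+\beta_j$ according to its type, giving a sequence $\gamma=(e_1,\dots,e_\ell)$; because the final slot is never empty, $\gamma$ ends in a positive part, so $\alpha\ast\beta$ indeed lands in $\bfk\LWC$ and each $M_\gamma\in\EQSYM$.

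The key step is the bijection: for a \emph{fixed} merge pattern, the pairs $(g,h)$ realizing it correspond bijectively to the strictly increasing sequences $q_1<\dots<q_\ell$, since $g$ and $h$ are recovered as the subsequences of $(q_r)$ at the slots of the appropriate types. Summing the associated monomials over this family therefore reproduces exactly $M_\gamma=\sum_{q_1<\dots<q_\ell}\prod_r x_{q_r}^{e_r}$. Summing over all merge patterns then yields $M_\alpha M_\beta=\sum_{\gamma\in(\alpha\ast\beta)}M_\gamma$, which is precisely the assertion $M_\alpha M_\beta=\phi(\alpha\ast\beta)$ extending Eq.~\eqref{eq:phi}.

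I expect the main obstacle to lie exactly in the handling of the zero components. Once a slot carries exponent $0$ the factor $x_{q_r}^0=1$ disappears from the actual monomial, so distinct index functions can produce the same monomial and a naive monomial-by-monomial matching breaks down (this is already visible in $M_{(0,1)}=\sum_{m\geq 2}(m-1)x_m$). Performing the entire count at the level of index functions circumvents this, and with that phrasing the argument is uniform, specializing to the standard computation for $\qsym$ when all components are positive. A possible alternative is an induction on $\ell(\alpha)+\ell(\beta)$ through the three-term recursion~\eqref{compqsh}, but isolating $M_\alpha$ by its smallest occupied index becomes awkward when the leading components vanish, so the direct slot-counting argument above seems preferable.
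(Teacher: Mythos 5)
Your proof is correct, but it takes a genuinely different route from the paper's. The paper proves the theorem by induction on $\ell(\alpha)+\ell(\beta)$: it expands $M_\alpha M_\beta$ as a double sum over index sequences $n_1<\cdots<n_k$ and $m_1<\cdots<m_l$, splits according to the trichotomy $n_1<m_1$, $m_1<n_1$, $n_1=m_1$, applies the induction hypothesis to each piece, and observes that the three pieces match the three terms of the recursion~\eqref{compqsh}. Your argument is instead a single global bijection: you classify all pairs $(g,h)$ of strictly increasing index functions by their merge pattern, identify the set of patterns with the formal terms of $\alpha\ast\beta$ (with multiplicity, as in Convention~\ref{con:index}), and recover each $M_\gamma$ as the sum over the $q$-sequences realizing a fixed pattern. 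What your approach buys is transparency of the multiplicity bookkeeping --- since zero exponents make distinct index sequences collapse to the same monomial, working at the slot level before imposing $x^0=1$ is exactly the right safeguard, and your checks (finiteness of each coefficient thanks to the final positive part, and that the last merged slot always carries a positive exponent so $\gamma\in\LWC$) cover the points where a naive monomial-level argument would fail. What the paper's induction buys is brevity and a proof that mirrors the recursive definition of $\ast$ directly. One side remark of yours is not borne out, though: you suggest that the inductive route ``becomes awkward when the leading components vanish,'' but the paper's induction also operates at the level of index sequences (comparing the smallest indices $n_1$ and $m_1$, not the smallest variables actually appearing in the monomial), so a leading $\alpha_1=0$ causes no difficulty there --- the factor $x_{n_1}^{0}$ is simply carried along formally. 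In effect your merge-pattern classification is the ``unrolled'' global form of the same trichotomy that the paper peels off one slot at a time.
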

\begin{proof}
Let $\alpha=(\alpha_1,\cdots,\alpha_k)\in \NN^{k-1}\times \PP$ and $\beta=(\beta_1,\cdots,\beta_l)\in \NN^{l-1}\times \PP$ be two left weak compositions.
It is easy to see that every summand of $\alpha*\beta$ is also a left weak composition.
Here we use the convention that $M_\alpha=1$ if $\ell(\alpha)=0$. We apply the induction on the sum $k+l$ of the lengths of $\alpha$ and $\beta$.
If $k+l=1$, then either $M_\alpha$ or $M_{\beta}$ is $1$, so the desired result is clearly true. Assume that it has been verified for
$k+l\leq t$ for a given $t\geq 1$ and consider the case $k+l=t+1$.
Then, by Eq. \eqref{eq:eqsym},
\begin{align*}
M_\alpha M_\beta=\sum_{\stackrel{1\leq n_1<n_2<\cdots<n_k}{1\leq m_1<m_2<\cdots<m_l}}
x_{n_1}^{\alpha_1}\cdots x_{n_k}^{\alpha_k}x_{m_1}^{\beta_1}\cdots x_{m_l}^{\beta_l}.
\end{align*}
The sum on the right hand can be divided into three parts:

(a) if $n_1<m_1$, then, by the induction hypothesis,
$$\sum_{1\leq n_1}x_{n_1}^{\alpha_1}
\left(\sum_{n_1<n_2<\cdots<n_k}x_{n_2}^{\alpha_2}\cdots x_{n_k}^{\alpha_k}\sum_{n_1< m_1<m_2<\cdots<m_l}x_{m_1}^{\beta_1}\cdots x_{m_l}^{\beta_l}\right)
=M_{(\alpha_1,(\alpha_2,\cdots,\alpha_k)*\beta)},$$

(b) if $m_1<n_1$, then, by the induction hypothesis,
$$\sum_{1\leq m_1}x_{m_1}^{\beta_1}
\left(\sum_{m_1<n_1<\cdots<n_k}x_{n_1}^{\alpha_1}\cdots x_{n_k}^{\alpha_k}\sum_{m_1< m_2<\cdots<m_l}x_{m_2}^{\beta_2}\cdots x_{m_l}^{\beta_l}\right)
=M_{(\beta_1,\alpha*(\beta_2,\cdots,\beta_l))},
$$

(c) if $m_1=n_1$, then, by the induction hypothesis,
$$\sum_{1\leq n_1=m_1}x_{n_1}^{\alpha_1+\beta_1}
\left(\sum_{n_1<n_2<\cdots<n_k}x_{n_2}^{\alpha_2}\cdots x_{n_k}^{\alpha_k}\sum_{n_1< m_2<\cdots<m_l}x_{m_2}^{\beta_2}\cdots x_{m_l}^{\beta_l}\right)
=M_{(\alpha_1+\beta_1,(\alpha_2,\cdots,\alpha_k)*(\beta_2,\cdots,\beta_l))}.
$$
The multiplication of two left weak compositions satisfies the same recursion as in Eq.~(\ref{compqsh}). Thus combining the three parts , we obtain $M_\alpha M_\beta=\sum\limits_{\gamma\in (\alpha\ast\beta)} M_{\gamma}$, as desired.
\end{proof}

By Theorem \ref{thmmonproduct},
the space of \eqsyms is a graded algebra
\begin{equation*}
\EQSYM:=\bigoplus_{n\geq 0}\EQSYM_n,
\end{equation*}
where $\EQSYM_0$ is spanned by $M_\emptyset=1$ and $\EQSYM_n$ for $n\geq 1$
consists of homogeneous \eqsyms of degree $n$. Here the degree of an \lwc monomial quasi-symmetric function $M_\alpha$ is defined to be the size of $\alpha$.
Note that since we allow the entries of $\alpha$ to be $0$, the rank of the homogeneous component $\EQSYM_n$ is infinite for each $n\geq 1$.

For a vector $\alpha:=(\alpha_0,\cdots,\alpha_k)\in \NN^{k}\times \PP$, denote $\alpha^\prime:=(\alpha_1,\cdots,\alpha_k)\in \NN^{k-1}\times \PP$ and $\alpha=(\alpha_0,\alpha^\prime)$. Let $\bar{\mathfrak{X}}=\{x_0\}\cup\mathfrak{X}$.  In the power series algebra $\bfk[[\overline{\mathfrak{X}}]]$, define
\begin{equation}
\overline{M}_{\alpha}:=x_0^{\alpha_0} M_{\alpha^\prime} =
x_0^{\alpha_0}\sum_{1\leq n_1<\cdots<n_k}x_{n_1}^{\alpha_1}\cdots x_{n_k}^{\alpha_k}.
\mlabel{eq:eeqsym}
\end{equation}

Define an augmented mixable shuffle product for left weak compositions by the same recursion as Eq.~\eqref{defdiamond}. More precisely,
if $(\alpha_0,\alpha')=(\alpha_0,\alpha_1,\cdots,\alpha_m)$ and $(\beta_0,\beta')=(\beta_0,\beta_1,\cdots,\beta_n)$ are left weak compositions,
then we define their {\bf augmented mixable shuffle product} by
\begin{align}\label{augmixshuweakcomp}
(\alpha_0,\alpha')\diamond(\beta_0,\beta')
:=\left\{\begin{array}{ll}
(\alpha_0+\beta_0),&m=n=0\\
(\alpha_0+\beta_0, \alpha'), & m>0,n=0 \\
(\alpha_0+\beta_0,\beta'), & m=0,n>0\\
(\alpha_0+\beta_0,\alpha'\ast \beta' ),&m>0,n>0
\end{array}\right \}
=\sum_{\gamma\in (\alpha\diamond\beta)} \gamma
\end{align}
with the notation in Convention~\ref{con:index}.
This defines a multiplication on the space spanned by left weak compositions.

\begin{lemma}
Let $\alpha=(\alpha_0,\alpha^\prime)$ and $\beta=(\beta_0,\beta')$ be weak compositions. Then
\begin{align*}
\overline{M}_{\alpha}\overline{M}_{\beta}=\sum_{\gamma\in (\alpha\diamond \beta)} \overline{M}_{\gamma},
\end{align*}
with the notation in Convention~\ref{con:index}.
\label{lem:eqsymmult}
\end{lemma}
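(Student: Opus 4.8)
The plan is to reduce everything to the multiplication formula for \lwc monomial quasi-symmetric functions already established in Theorem~\ref{thmmonproduct}, exploiting the fact that the extra variable $x_0$ is disjoint from $\mathfrak{X}$ and therefore simply accumulates in its own exponent. Writing $\alpha=(\alpha_0,\alpha')$ and $\beta=(\beta_0,\beta')$, the definition in Eq.~\eqref{eq:eeqsym} factors each basis element as $\overline{M}_\alpha=x_0^{\alpha_0}M_{\alpha'}$ and $\overline{M}_\beta=x_0^{\beta_0}M_{\beta'}$, where $M_{\alpha'}$ and $M_{\beta'}$ involve only the variables in $\mathfrak{X}$. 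First I would multiply these two power series in $\bfk[[\overline{\mathfrak{X}}]]$ and use commutativity to collect the $x_0$-powers, obtaining
\[
\overline{M}_\alpha\,\overline{M}_\beta=x_0^{\alpha_0+\beta_0}\,M_{\alpha'}M_{\beta'}.
\]

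Next I would apply Theorem~\ref{thmmonproduct} to expand $M_{\alpha'}M_{\beta'}=\sum_{\delta\in(\alpha'\ast\beta')}M_\delta$, and then distribute the factor $x_0^{\alpha_0+\beta_0}$ across the sum. Each summand then has the form $x_0^{\alpha_0+\beta_0}M_\delta$, which by Eq.~\eqref{eq:eeqsym} is exactly $\overline{M}_{(\alpha_0+\beta_0,\delta)}$. Since every $\delta$ appearing in $\alpha'\ast\beta'$ is itself a left weak composition (as observed in the proof of Theorem~\ref{thmmonproduct}), the index $(\alpha_0+\beta_0,\delta)$ lies in $\NN^{k}\times\PP$, so each $\overline{M}_{(\alpha_0+\beta_0,\delta)}$ is a well-defined basis element.

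Finally I would match the resulting sum $\sum_{\delta\in(\alpha'\ast\beta')}\overline{M}_{(\alpha_0+\beta_0,\delta)}$ against the right-hand side $\sum_{\gamma\in(\alpha\diamond\beta)}\overline{M}_\gamma$ by inspecting the four cases in the definition of $\diamond$ in Eq.~\eqref{augmixshuweakcomp}. In the generic case $m,n>0$ one has $\alpha\diamond\beta=(\alpha_0+\beta_0,\alpha'\ast\beta')$, so the two sums coincide term by term. The boundary cases, when $\alpha'$ or $\beta'$ is empty, must be checked separately, but they follow from the conventions $M_\emptyset=1$ and $1\ast\gamma=\gamma\ast 1=\gamma$: for instance if $m=0$ then $M_{\alpha'}=1$ and $\alpha'\ast\beta'=\beta'$, recovering $\overline{M}_\alpha\overline{M}_\beta=\overline{M}_{(\alpha_0+\beta_0,\beta')}$, in agreement with the third line of Eq.~\eqref{augmixshuweakcomp}. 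I do not expect a genuine obstacle here: the content of the lemma is entirely carried by Theorem~\ref{thmmonproduct}, and the only care needed is bookkeeping, namely ensuring the empty-index conventions line up with the degenerate cases of $\diamond$ and confirming that the separateness of $x_0$ justifies pulling $x_0^{\alpha_0+\beta_0}$ out of and back into the quasi-shuffle sum. One could alternatively argue by induction on $\ell(\alpha')+\ell(\beta')$ mirroring the proof of Theorem~\ref{thmmonproduct}, but the factorization argument above is shorter and makes the role of $x_0$ transparent.
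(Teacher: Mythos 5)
Your proposal is correct and follows essentially the same route as the paper's proof: factor out $x_0^{\alpha_0+\beta_0}$, apply Theorem~\ref{thmmonproduct} to $M_{\alpha'}M_{\beta'}$, and identify the result with $\overline{M}_{(\alpha_0+\beta_0,\alpha'\ast\beta')}$ via the definition of $\diamond$ in Eq.~\eqref{augmixshuweakcomp}. Your explicit check of the degenerate cases $m=0$ or $n=0$ is a small bookkeeping addition that the paper leaves implicit in its conventions, but it is not a different argument.
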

\begin{proof}
By Eq.~\eqref{eq:eeqsym}, we have
$\overline{M}_{\alpha}\overline{M}_{\beta}=x_0^{\alpha_0} M_{\alpha^\prime}x_0^{\beta_0} M_{\beta^\prime}
=x_0^{\alpha_0+\beta_0} M_{\alpha^\prime} M_{\beta^\prime}$. It follows from Eq.~\eqref{augmixshuweakcomp} and Theorem~\ref{thmmonproduct} that
\begin{align*}
\overline{M}_{\alpha}\overline{M}_{\beta}
=x_0^{\alpha_0+\beta_0} M_{\alpha^\prime*\beta^\prime}
=\overline{M}_{(\alpha_0+\beta_0,\alpha^\prime*\beta^\prime)}
=\sum\limits_{\gamma\in (\alpha\diamond \beta)}\overline{M}_{\gamma},
\end{align*}
as desired.
\end{proof}

Thus we have identified the free nonunitary Rota--Baxter algebra of weight $1$ on $x$ with the algebra of \eqsyms and thus identified this free Rota-Baxter algebra as a subalgebra of $\bfk[[\overline{\mathfrak{X}}]]$. This result goes in the same spirit as Theorem~\ref{thm:rota} of Rota relating free Rota-Baxter algebras with symmetric functions.

Let $\overline{{\EQSYM}}$ be the $\bfk$-submodule of $\bfk[[\overline{\mathfrak{X}}]]$ spanned by $\overline{M}_{\alpha}$,
where $\alpha\in \NN^{k}\times \PP$, $k\geq0$.
Define a $\bfk$-linear operator
\begin{equation}
P_Q:\overline{\EQSYM}\longrightarrow \overline{\EQSYM}, \quad P_Q(\overline{M}_{\alpha})=\overline{M}_{(0,\alpha)}
\label{eq:pq}
\end{equation}
and then extend linearly.

\begin{theorem}\label{nonunitarymainth}
The linear operator $P_Q$ is a Rota--Baxter operator of weight $1$ on $\overline{\EQSYM}$,
and $(\overline{{\EQSYM}}, P_Q)$ is the free nonunitary commutative Rota--Baxter algebra  of weight $1$ on one generator.
\end{theorem}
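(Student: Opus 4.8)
The plan is to prove both assertions at once by transporting the Rota--Baxter structure from the model for the free object supplied by Theorem~\ref{freenonurba}. That theorem presents the free nonunitary commutative Rota--Baxter algebra on one generator as $\sha(x)^0$, spanned by the pure tensors $x^{\alpha_0}\ot\cdots\ot x^{\alpha_k}$ with $(\alpha_0,\ldots,\alpha_k)\in\NN^k\times\PP$, $k\geq0$, equipped with the augmented mixable shuffle product $\diamond$ and the operator $P_x\colon\fraka\mapsto 1\ot\fraka$; the distinguished generator is $x=x^1$. Since this indexing set is exactly the one labelling the spanning set $\{\overline{M}_\alpha\}$ of $\overline{\EQSYM}$, I would introduce the candidate map
\[
\Phi\colon \sha(x)^0\longrightarrow\overline{\EQSYM},\qquad x^{\alpha_0}\ot\cdots\ot x^{\alpha_k}\longmapsto \overline{M}_{(\alpha_0,\ldots,\alpha_k)},
\]
and show it is an isomorphism of Rota--Baxter algebras; both claims of the theorem then follow at once.

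First I would check that $\Phi$ is a $\bfk$-module isomorphism. As it is a bijection on the spanning sets by construction, the only point is the linear independence of $\{\overline{M}_\alpha\}$ (this also legitimizes the definition of $P_Q$ on a basis). Given a finite relation $\sum_\alpha c_\alpha\overline{M}_\alpha=0$, I would group the summands by the exponent $\alpha_0$ of the auxiliary variable $x_0$; since distinct powers of $x_0$ are linearly independent over $\bfk[[\mathfrak{X}]]$, each block $x_0^{d}\sum_{\alpha'}c_{(d,\alpha')}M_{\alpha'}$ must vanish separately, and then Lemma~\ref{qsym0basis} forces every coefficient to be $0$.

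Next I would verify that $\Phi$ is an algebra homomorphism intertwining the two operators. Multiplicativity is precisely Lemma~\ref{lem:eqsymmult}, once one observes that the augmented mixable shuffle on left weak compositions of Eq.~\eqref{augmixshuweakcomp} is the very recursion $\diamond$ that defines the product on $\sha(x)^0$; hence $\Phi\bigl((x^{\alpha_0}\ot\cdots)\diamond(x^{\beta_0}\ot\cdots)\bigr)=\sum_{\gamma\in(\alpha\diamond\beta)}\overline{M}_\gamma=\overline{M}_\alpha\,\overline{M}_\beta$. For the operators, $P_x$ prepends the factor $1=x^0$, so $\Phi(P_x(x^{\alpha_0}\ot\cdots\ot x^{\alpha_k}))=\overline{M}_{(0,\alpha_0,\ldots,\alpha_k)}=P_Q(\overline{M}_{(\alpha_0,\ldots,\alpha_k)})$, which is exactly $\Phi\circ P_x=P_Q\circ\Phi$.

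Finally I would conclude by transport of structure. Because $\Phi$ is an algebra isomorphism with $\Phi\circ P_x=P_Q\circ\Phi$ and $P_x$ satisfies the weight~$1$ Rota--Baxter identity, $P_Q$ satisfies it as well, so $(\overline{\EQSYM},P_Q)$ is a nonunitary commutative Rota--Baxter algebra of weight~$1$. Moreover $\Phi$ is then an isomorphism of Rota--Baxter algebras onto the free object of Theorem~\ref{freenonurba}, so $(\overline{\EQSYM},P_Q)$ is itself free on one generator, the generator being $\Phi(x)=\overline{M}_{(1)}=x_0$: for any target $(R,P)$ and any $r\in R$, the unique Rota--Baxter homomorphism out of $\sha(x)^0$ sending $x\mapsto r$, precomposed with $\Phi^{-1}$, is the unique such homomorphism out of $\overline{\EQSYM}$ sending $x_0\mapsto r$. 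The only genuine computation is the linear independence in the second step; the rest is matching the two product recursions and the two operators under the relabelling $\Phi$, which is where care is needed but which the definitions in Eq.~\eqref{augmixshuweakcomp} and Eq.~\eqref{eq:pq} were arranged to guarantee.
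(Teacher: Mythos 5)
Your proposal is correct and follows essentially the same route as the paper: both construct the linear bijection $x^{\alpha_0}\otimes\cdots\otimes x^{\alpha_k}\mapsto \overline{M}_{(\alpha_0,\ldots,\alpha_k)}$ from the model $\sha(x)^0$ of Theorem~\ref{freenonurba}, verify multiplicativity via Lemma~\ref{lem:eqsymmult} and the matching recursions \eqref{defdiamond} and \eqref{augmixshuweakcomp}, check the intertwining with \eqref{eq:pq}, and conclude by transport of structure. Your only addition is spelling out the linear independence of the $\overline{M}_\alpha$ by separating powers of $x_0$ and invoking Lemma~\ref{qsym0basis}, a detail the paper asserts without elaboration.
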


\begin{proof}
By Theorem~\mref{freenonurba}, as a free $\mathbf{k}$-module, $\sha(x)^0$ has the $\mathbf{k}$-linear basis
\begin{align*}
\{x^{\alpha_0}\otimes x^{\alpha_1}\otimes\cdots\otimes x^{\alpha_k}|\alpha_i\in \mathbb{N}, \alpha_k\in \mathbb{P}, 0\leq i\leq k-1,k\in \mathbb{N}\}.
\end{align*}
By Lemma \ref{qsym0basis}, $\overline{\EQSYM}$ has the $\mathbf{k}$-linear basis
\begin{align*}
\{{\bar{M}}_{(\alpha_0,{\alpha_1},\cdots,{\alpha_k})}|\alpha_i\in \mathbb{N}, \alpha_k\in \mathbb{P}, 0\leq i\leq k-1,k\in \mathbb{N}\}.
\end{align*}
Therefore, the assignment
$$ \varphi:\sha(x)^0\longrightarrow \overline{\EQSYM}, \quad
x^{\alpha_0}\otimes x^{\alpha_1}\otimes\cdots\otimes x^{\alpha_k}
\mapsto {\bar{M}}_{(\alpha_0,{\alpha_1},\cdots,{\alpha_k})}$$
defines a linear bijection. Further, by Eqs.~\eqref{defdiamond}, \eqref{augmixshuweakcomp}, \eqref{eq:pq} and  Lemma~\ref{lem:eqsymmult}, $\varphi$ is a Rota--Baxter algebra isomorphism. Thus the conclusion follows from Theorem~\ref{freenonurba}.
\end{proof}

Using the classical case of Spitzer's identity, Theorem \ref{nonunitarymainth} gives a relation among the monomial quasi-symmetric functions of the same degree.

Recall that if $(R,P)$ is a commutative Rota--Baxter $\QQ$-algebra of weight $1$, then for $b\in R$, we have the following Spitzer's identity~\cite{RS} in the ring of power series $R[[t]]$:
\begin{align}\label{sptizexpand2}
\exp\left(P\left(\log(1+bt)\right)\right)
=\sum_{n=0}^{\infty}(Pb)^{[n]}t^n,
\end{align}
where
\begin{align*}
(Pb)^{[n]}=\underbrace{P(b(P(b\cdots(P(b))\cdots)))}_{n\ {\rm iteration}}
\end{align*}
with the convention that $(Pb)^{[1]}=P(b)$ and $(Pb)^{[0]}=1$.

\begin{prop}\label{proprelacommon}
Let $n,k$ be positive integers. We write $k^n$ for the composition $(k,\cdots,k)$ where $k$ appears $n$ times.
Then
\begin{align*}
M_{k^n}=(-1)^n\sum_{\alpha\models n}\frac{(-1)^{\ell(\alpha)}M_{k{\alpha_1}}\cdots M_{k{\alpha_{\ell(\alpha)}}}}{\ell(\alpha)!\alpha_1\cdots\alpha_{\ell(\alpha)}}.
\end{align*}
\end{prop}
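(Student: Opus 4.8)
The plan is to apply Spitzer's identity~\eqref{sptizexpand2} inside the free nonunitary commutative Rota--Baxter algebra $(\overline{\EQSYM},P_Q)$ of Theorem~\ref{nonunitarymainth}, taking as input the element $b=x_0^k=\overline{M}_{(k)}$. First I would record the two elementary computations that make $b$ the right choice. On the one hand, $P_Q(x_0^{kj})=\overline{M}_{(0,kj)}=M_{(kj)}$ directly from \eqref{eq:pq} and \eqref{eq:eeqsym}. On the other hand, I claim that $(P_Q b)^{[n]}=M_{k^n}$ for all $n\geq 1$, which I would prove by induction on $n$: the base case is $(P_Qb)^{[1]}=P_Q(x_0^k)=M_{(k)}$, and for the inductive step I would use Lemma~\ref{lem:eqsymmult} together with \eqref{augmixshuweakcomp} to compute $\overline{M}_{(k)}\cdot\overline{M}_{(0,k^n)}=\overline{M}_{(k,k^n)}$, so that $(P_Qb)^{[n+1]}=P_Q(\overline{M}_{(k,k^n)})=\overline{M}_{(0,k^{n+1})}=M_{k^{n+1}}$.

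With these in hand, the right-hand side of \eqref{sptizexpand2} becomes $\sum_{n\geq0}M_{k^n}t^n$ (using the convention $M_{k^0}=1$), while the argument of the exponential on the left simplifies, via $\log(1+x_0^k t)=\sum_{j\geq1}\tfrac{(-1)^{j-1}}{j}x_0^{kj}t^j$ and the linearity of $P_Q$, to
\begin{align*}
P_Q\bigl(\log(1+x_0^k t)\bigr)=\sum_{j\geq1}\frac{(-1)^{j-1}}{j}M_{(kj)}t^j .
\end{align*}
Spitzer's identity then yields the generating-function identity $\sum_{n\geq0}M_{k^n}t^n=\exp\bigl(\sum_{j\geq1}\tfrac{(-1)^{j-1}}{j}M_{(kj)}t^j\bigr)$ in the power series ring $\overline{\EQSYM}[[t]]$.

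To finish, I would extract the coefficient of $t^n$ from the exponential by the standard expansion $\exp(\sum_j c_jt^j)=\sum_{m\geq0}\frac{1}{m!}(\sum_j c_jt^j)^m$ with $c_j=\tfrac{(-1)^{j-1}}{j}M_{(kj)}$. Grouping the resulting monomials $t^{\alpha_1+\cdots+\alpha_m}$ according to the composition $\alpha=(\alpha_1,\dots,\alpha_m)\models n$ gives the coefficient $\sum_{\alpha\models n}\frac{1}{\ell(\alpha)!}\,c_{\alpha_1}\cdots c_{\alpha_{\ell(\alpha)}}$; substituting $c_{\alpha_i}$ and using $\sum_i(\alpha_i-1)=n-\ell(\alpha)$ to collect the signs into $(-1)^{n-\ell(\alpha)}=(-1)^n(-1)^{\ell(\alpha)}$ produces exactly the claimed formula.

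I do not expect a serious obstacle; the one point demanding care is the identification $(P_Qb)^{[n]}=M_{k^n}$, since this is where the Rota--Baxter operator and the multiplication rule of Lemma~\ref{lem:eqsymmult} interact, and the induction must be arranged so that each iteration reinserts a single leading $x_0^k$ before $P_Q$ is applied. I would also note that, because the stated coefficients involve $1/(\ell(\alpha)!\,\alpha_1\cdots\alpha_{\ell(\alpha)})$ and \eqref{sptizexpand2} requires a $\QQ$-algebra, the identity is to be read over a base ring containing $\QQ$ (or formally in $\overline{\EQSYM}\otimes\QQ$); the coefficient extraction itself is the usual exponential formula and is routine.
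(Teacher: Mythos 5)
Your proposal is correct and takes essentially the same route as the paper's proof: both apply Spitzer's identity \eqref{sptizexpand2} in the free nonunitary Rota--Baxter algebra $(\overline{\EQSYM},P_Q)$ with $b=\overline{M}_{(k)}=x_0^k$, use $P_Q(b^i)=M_{(ki)}$ and $(P_Qb)^{[n]}=M_{k^n}$, and then extract the coefficient of $t^n$ from the expanded exponential, grouping terms by compositions $\alpha\models n$ with the same sign bookkeeping $(-1)^{n-\ell(\alpha)}=(-1)^n(-1)^{\ell(\alpha)}$. The only difference is cosmetic: you specialize $b$ before expanding the exponential and supply an explicit induction for $(P_Qb)^{[n]}=M_{k^n}$ (a step the paper asserts without proof), whereas the paper expands first and then substitutes.
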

\begin{proof}
It follows from Eq.~\eqref{sptizexpand2} that
\begin{align*}
\exp\left(-\sum_{i=1}^\infty\frac{(-1)^i}{i}P(b^i)t^i\right)=\sum_{n=0}^{\infty}(Pb)^{[n]}t^n.
\end{align*}
By a direct calculation we obtain
\begin{align*}
\sum_{n=0}^{\infty}(Pb)^{[n]}t^n=&\sum_{j=0}^\infty \frac{(-1)^j}{j!}\left(\sum_{i=1}^\infty\frac{(-1)^i}{i}P(b^i)t^i\right)^j\cr
=&\sum_{n=0}^{\infty}\left((-1)^n\sum_{\alpha\models n}\frac{(-1)^{\ell(\alpha)}P(b^{\alpha_1})\cdots P(b^{\alpha_{\ell(\alpha)}})}{\ell(\alpha)!\alpha_1\cdots\alpha_{\ell(\alpha)}}\right)t^n.
\end{align*}

Take $b={\bar{M}}_{k}$ in the free nonunitary Rota--Baxter algebra $\overline{\EQSYM}$.
Then $P(b^i)=P({\bar{M}}_{k}^i)=P({\bar{M}}_{ki})=M_{ki}$ and
$(Pb)^{[n]}=M_{k^n}$.
Consequently,  the desired conclusion follows.
\end{proof}

\section{$P$-partitions and \lwc fundamental quasi-symmetric functions}\label{Sec:baseseqsym0}

In this section, we use the concept of a $P$-partition to generalize fundamental quasi-symmetric functions for compositions to left weak compositions. We show that \lwc fundamental quasi-symmetric functions, as well as the \lwc monomial quasi-symmetric functions,
form a $\bfk$-basis for $\EQSYM$. We also give the transformation formulas between these two bases.
We assume that $\bfk=\QQ$ in this section.

\subsection{$P$-partitions and \eqsyms}

$P$-partitions is an important combinatorial tool for finding the number
of permutations in a given set of permutations with a given descent set.
The definition of $P$-partitions is due to Stanley \cite{Sta}. Gessel \cite{Ge} applied this concept to connect permutation descents and  quasi-symmetric functions.
In this subsection, we use the $P$-partition to give a combinatorial meaning for \eqsyms.

A partial order on a set of  $n$ positive integers is called a {\bf labeled poset}. These integers are referred to as the {\bf labels}
of the poset. Following the notation of \cite{Ge}, we write the set of labels as $[n]=\{1,2,\cdots,n\}$, $<$ for the usual total order
 on $[n]$. Let $P$ denote the set $[n]$ equipped with a partial order $<_P$.
We use the same symbol to refer to both the poset and its set of labels when the meaning is clear from the context.

A {\bf P-partition} is a function $f:P\rightarrow \PP$ satisfying
\begin{enumerate}
\item  $a<_Pb$ implies $f(a)\leq f(b)$, and
\item $a<_Pb$ and $a>b$ imply $f(a)<f(b)$.
\end{enumerate}
We denote the set of all P-partitions by $A(P)$.

Given two disjoint posets $P$ and $Q$,
the {\bf ordinal sum} of $P$ and $Q$ is the poset $P\oplus Q$ on the union
$P\cup Q$, such that all of the order relations of $P$ and $Q$ are retained, and in addition, $p<q$ for all $p\in P$ and $q\in Q$.

\begin{const}\label{constp-partextend}
Let $\alpha=(0^{i_1},s_1, \cdots,0^{i_k},s_k)$ be a left weak composition, where $i_p\geq0$, $s_p>0$,
$p=1,\cdots,k$. Construct a chain $P_\alpha:=C_1 \oplus P_1\oplus \cdots\oplus C_{k}\oplus P_k$ on $i_1+s_1+\cdots+i_k+s_k$
elements, where $C_p$ and $P_p$ are chains with $i_p$ and $s_p$ elements respectively.
Make $P_\alpha$ into a labeled poset by numbering first the elements of $C_1,C_2,\cdots,C_k$ (in the designated order) and then the elements of $P_k,P_{k-1},\cdots,P_1$ (in the designated order), such that the labeled order is compatible with the order of $C_p$ and $P_p$.
\end{const}

For example,
let $\alpha=(0^2,2,0,1,3)$. Then $C_3=\emptyset$ and $P_\alpha=\{1,2\}\oplus\{8,9\}\oplus\{3\}\oplus\{7\}\oplus\emptyset\oplus\{4,5,6\}$.

For any  $P$-partition $f\in A(P_\alpha)$, define the {\bf weight} of $f$ to be the monomial
\begin{align*}
w(f)=\prod_{q\in P_\alpha} x_{f(q)}^{\sigma_q},\qquad {\rm where}\qquad
\sigma_q=\begin{cases}
0, &q\in C_p\ {\rm for\ some}\ p,\\
1, &q\in P_p\ {\rm for\ some}\ p,
\end{cases}
\end{align*}
and define the {\bf generating function} for $P_\alpha$ to be
\begin{align*}
\Gamma(P_\alpha)=\sum_{f\in A(P_\alpha)}w(f).
\end{align*}

There is an explicit expression for $\Gamma(P_\alpha)$.

\begin{lemma}\label{lemfundqusm}
Let  $\alpha=(0^{i_1},s_1,\cdots,0^{i_k},s_k)$ be a  left weak composition and let $a_p=i_1+s_1+\cdots+i_p+s_p$, $p=1,2,\cdots,k$. Then
\begin{align*}
\Gamma(P_\alpha)=\sum x_{n_1}^0\cdots x_{n_{i_1}}^0 x_{n_{i_1+1}}\cdots x_{n_{a_1}}
\cdots x_{n_{a_{k-1}+1}}^0\cdots x_{n_{a_{k-1}+i_{k}}}^0 x_{n_{a_{k-1}+i_{k}+1}}\cdots x_{n_{a_k}},
\end{align*}
where the sum is over all variables in $\mathfrak{X}$ subject to the condition
$$1\leq n_1\leq \cdots \leq n_{a_1}<n_{a_1+1}\leq\cdots< n_{a_{k-1}+1}\leq\cdots \leq n_{a_{k}}.$$
\end{lemma}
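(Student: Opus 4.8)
The plan is to exploit the fact that $P_\alpha$ is a \emph{chain}: since the order $<_P$ is total, a $P$-partition $f\in A(P_\alpha)$ is completely determined by the list of values it takes on the elements read in increasing poset order, and the two defining conditions of a $P$-partition reduce to a single chain of inequalities on these values --- weak between every consecutive pair, and strict exactly at those consecutive pairs that form a descent of the labeling, that is, where the lower element carries the larger label. First I would list the elements in poset order as the concatenation $C_1,P_1,C_2,P_2,\dots,C_k,P_k$ and write the values assigned to them as $n_1,\dots,n_{a_k}$, so that $C_p$ occupies the positions immediately before $P_p$ and the block $P_p$ ends at position $a_p$.

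The heart of the argument is to locate the descents of the labeled chain. Set $I:=i_1+\cdots+i_k$, the number of elements in the $C$-blocks. By the labeling convention, the labels $1,\dots,I$ are distributed to $C_1,\dots,C_k$ in this order, and the labels $I+1,\dots,a_k$ to $P_k,P_{k-1},\dots,P_1$ in this order, each block being labeled compatibly with the chain. I would then inspect the consecutive pairs one kind at a time. Within a single block $C_p$ or $P_p$ the labels increase along the chain, so no internal consecutive pair is a descent; at the junction from the top of $C_p$ to the bottom of $P_p$ the label jumps from a value $\leq I$ up to a value $>I$, again an ascent. The only descents sit immediately after each $P_p$ with $1\leq p\leq k-1$: if $C_{p+1}\neq\emptyset$, the label drops from a value $>I$ in $P_p$ to a value $\leq I$ in $C_{p+1}$; and if $C_{p+1}=\emptyset$, the chain passes directly from $P_p$ to $P_{p+1}$, where, because the $P$-blocks are numbered in the \emph{reversed} order $P_k,\dots,P_1$, the block $P_{p+1}$ carries strictly smaller labels than $P_p$. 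In either case the lower element has the larger label, so the positions $a_1,a_2,\dots,a_{k-1}$ are exactly the descents, and no others occur.

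It then remains to translate. The first defining condition forces $n_1\leq n_2\leq\cdots\leq n_{a_k}$, while the second upgrades these to strict inequalities exactly at the descent positions $a_1,\dots,a_{k-1}$; this is precisely the constraint
$$1\leq n_1\leq \cdots \leq n_{a_1}<n_{a_1+1}\leq\cdots< n_{a_{k-1}+1}\leq\cdots \leq n_{a_{k}}$$
in the statement. For the weight, $\sigma_q=0$ on every element of a $C$-block and $\sigma_q=1$ on every element of a $P$-block, so $w(f)$ is the product of the $x_{n_j}$ over the positions lying in the $P$-blocks, the $C$-block positions contributing only the harmless factors $x_{n_j}^0=1$. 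Since $f\mapsto(n_1,\dots,n_{a_k})$ is a bijection from $A(P_\alpha)$ onto the set of admissible value sequences, summing $w(f)$ over $f\in A(P_\alpha)$ reproduces verbatim the claimed expression for $\Gamma(P_\alpha)$.

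I expect the one genuinely delicate point to be the descent computation when $C_{p+1}=\emptyset$, where one must invoke the reversed labeling order $P_k,\dots,P_1$ to see that $P_{p+1}$ receives smaller labels than $P_p$; the remaining steps are a direct unwinding of the definitions of $P$-partition, weight, and generating function.
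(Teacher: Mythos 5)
Your proof is correct and follows essentially the same route as the paper: identify the labeled chain $C_1\oplus P_1\oplus\cdots\oplus C_k\oplus P_k$ with positions $1,\dots,a_k$, set $n_r=f(q)$ for the $r$-th element in poset order, and read off the inequality chain from the definition of a $P$-partition. The paper simply states that the result ``follows from the definition of $P$-partition,'' whereas you make explicit the descent analysis it leaves implicit --- in particular the observation that the reversed labeling of $P_k,\dots,P_1$ forces a descent at each position $a_p$, $1\leq p\leq k-1$, even when $C_{p+1}=\emptyset$ --- which is a faithful filling-in of the same argument rather than a different one.
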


Continue with the above example of $\alpha=(0^2,2,0,1,3)$ and $P_\alpha=\{1,2\}\oplus\{8,9\}\oplus\{3\}\oplus\{7\}\oplus\emptyset\oplus\{4,5,6\}$. With the convention of $n_r=f(q)$  if $q$ is the $r$-th element of the poset $(P_\alpha,<_{P_{\alpha}})$, we obtain
\begin{align*}
\Gamma\left(P_{(0^2,2,0,1,3)}\right)=\sum_{1\leq n_1\leq n_2\leq n_3\leq n_4< n_5\leq n_6< n_7\leq n_8\leq n_9}
x_{n_1}^0x_{n_2}^0x_{n_3}x_{n_4}x_{n_5}^0x_{n_6}x_{n_7}x_{n_8}x_{n_9}.
\end{align*}

\begin{proof}
By Construction \ref{constp-partextend}, as a labeled poset, the underlying set of $P_\alpha$ is $\{1,2,\cdots,a_k\}$, where $a_k:=i_1+s_1+\cdots+i_k+s_k$.
With the convention of $i_0=s_0=0$ for $p=1,\cdots, k$, we have
\begin{align*}
C_p=\left\{\left. t\in \NN\,\right|\,i_1+\cdots+i_{p-1}+1\leq t\leq i_1+\cdots+i_{p-1}+i_p\right\}
\end{align*}
and
\begin{align*}
P_p=\left\{t\in \NN\,|\,i_1+\cdots+i_k+s_1+\cdots+s_{k-p}+1\leq t\leq i_1+\cdots+i_k+s_1+\cdots+s_{k-p+1}\right\}.
\end{align*}
For a  $P$-partition $f\in A(P_\alpha)$,  we write $n_{r}=f(q)$ if $q$ is the $r$-th element of the poset $(P_\alpha,<_{P_{\alpha}})$.
Then the desired statement follows from the definition of P-partition.
\end{proof}

\subsection{\lwc fundamental quasi-symmetric functions}

As in the case of quasi-symmetric functions, we will next show, in Theorem \ref{fundamentalbasmonomialbs}, that $\Gamma(P_\alpha)$ give a class of \eqsyms, called \lwc fundamental quasi-symmetric functions.
If $\alpha$ is a composition, then we recover the fundamental quasi-symmetric functions.

We first need a partial order on the set of left weak compositions of $n$.
Let $\alpha,\beta$ be two compositions of $n$.
Recall that $\alpha\preceq \beta$, if we can obtain the parts of $\beta$ by adding some adjacent parts of $\alpha$.
Let $\alpha,\beta$ be left weak compositions of $n$, and let $\beta=(0^{j_1},\beta_1,\cdots,0^{j_k},\beta_k)$ where
$j_p$ are nonnegative integers and $\beta_p$ are
positive integers, $p=1, \cdots,k$.
We extend the order $\preceq$ on compositions to the set of left weak compositions of $n$, still denoted by $\preceq$,
by setting $\alpha\preceq\beta$
if we can write
$\alpha=(0^{i_1},\alpha_{1},\cdots,0^{i_k},\alpha_{k})$
such that $0\leq i_p\leq j_p$
and $\alpha_p$ is a composition of $\beta_p$, $p=1, \cdots,k$.
For example, let $\alpha=(1,2,0^2,1,2,1,2)$, $\beta=(3,0^4,1,0^3,3,2)$ be two left weak compositions.
Then we can write $\beta$ as $(3,0^4,1,0^3,3,0^0,2)$
and $\alpha$ as $(1,2,0^2,1,0^0,2,1,0^0,2)$. Clearly,
$$(1,2,0^2,1,0^0,2,1,0^0,2)\preceq (3,0^4,1,0^3,3,0^0,2)$$
so that $\alpha\preceq \beta$.
But $(1,2,0^2,1,0^4,2,1,2)$ is incomparable with $\beta$.

Let  $\alpha=(0^{i_1},s_1,\cdots,0^{i_k},s_k)$ be a left weak composition, where $i_p\geq0$, $s_p>0$,
$p=1,\cdots,k$.
Recall that the \lwc monomial quasi-symmetric function $M_\alpha$ is given by
$$ {M}_{\alpha}=\sum_{1\leq n_1<\cdots<n_{i_1+\cdots+i_{k}+k}}
x_{n_1}^0\cdots x_{n_{i_1}}^0 x_{n_{i_1+1}}^{s_1} \cdots  x_{n_{i_1+\cdots+i_{k-1}+k}}^0
\cdots x_{n_{i_1+\cdots+i_{k}+k-1}}^0 x_{n_{i_1+\cdots+i_{k}+k}}^{s_k}.
$$
Denote
$a_p=i_1+s_1+\cdots+i_p+s_p$, $p=1,2,\cdots,k$.
Then the {\bf left weak composition (\lwc) fundamental  quasi-symmetric function} ${F_\alpha}$ is a formal power series
defined by
\begin{align}\label{fundmbasis1}
 {F}_{\alpha}=\sum x_{n_1}^0\cdots x_{n_{i_1}}^0 x_{n_{i_1+1}}\cdots x_{n_{a_1}}
\cdots x_{n_{a_{k-1}+1}}^0\cdots x_{n_{a_{k-1}+i_{k}}}^0 x_{n_{a_{k-1}+i_{k}+1}}\cdots x_{n_{a_k}},
\end{align}
where the summation is subject to the condition
\begin{align}\label{ni<leqni+1}
1\leq n_1\leq \cdots \leq n_{a_1}<n_{a_1+1}\leq\cdots< n_{a_{k-1}+1}\leq\cdots \leq n_{a_{k}}.
\end{align}

\begin{theorem}\label{fundamentalbasmonomialbs}
Let a left weak composition be written as $\alpha=(0^{i_1},s_1,\cdots,0^{i_k},s_k)$,
where $i_1,\cdots,i_{k}$ are nonnegative integers and $s_1,\cdots,s_k$ are positive integers. Then
\begin{align}\label{fmalphas}
 {F}_{\alpha}= \sum_{\beta\preceq\alpha}c_{\alpha,\beta}  {M}_\beta,
\end{align}
where $c_{\alpha,\beta}=\binom{i_1}{j_1}\cdots
\binom{i_k}{j_k} $  if
$\beta=(0^{j_1},\beta_1,\cdots,0^{j_k},\beta_k)$ with $\beta_p$ a composition of $s_p$ and $0\leq j_p\leq i_p$ for $1\leq p\leq k$.
In particular, $c_{\alpha,\alpha}=1$.
Moreover, when $\alpha$ runs through all left weak compositions, the elements ${F}_{\alpha}$, together with $F_\emptyset=1$, form a basis for $\EQSYM$.
\end{theorem}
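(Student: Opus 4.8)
The plan is to establish the transformation formula \eqref{fmalphas} first and then derive the basis assertion from it by triangularity. For the formula I would begin with the defining expansion of $F_\alpha$ in \eqref{fundmbasis1}, whose indices $n_1,\dots,n_{a_k}$ obey the chain \eqref{ni<leqni+1} in which the relations are \emph{weak} ($\leq$) inside each block $(0^{i_p},s_p)$ but \emph{strict} ($<$) between consecutive blocks. The central device is to partition the admissible index tuples by their \emph{equality pattern}, resolving every weak relation $n_r\leq n_{r+1}$ into either $n_r=n_{r+1}$ or $n_r<n_{r+1}$. For a fixed resolution the distinct indices are strictly increasing, equal indices merge their variables with exponents added (the zero-exponent factors contributing $1$), and summing over all index values compatible with the pattern reproduces exactly one monomial \lwc quasi-symmetric function $M_\beta$, where $\beta$ is the exponent sequence of the merged groups. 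Since these patterns partition the index set, one obtains $F_\alpha=\sum_\beta c_{\alpha,\beta}M_\beta$ with $c_{\alpha,\beta}$ equal to the number of patterns yielding $\beta$.

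Because consecutive blocks are separated by a \emph{forced} strict inequality, a pattern is an independent choice inside each block, so $c_{\alpha,\beta}$ factors as a product over $p=1,\dots,k$. Within block $p$ the exponent string is $0^{i_p}1^{s_p}$; to reach a target pattern $(0^{j_p},\beta_p)$ with $\beta_p\models s_p$, the cuts among the $s_p$ ones are forced by $\beta_p$, and the only freedom is how the $i_p$ zeros split between the $j_p$ pure-zero groups and the zeros absorbed into the first positive group. If $z$ zeros are absorbed, the remaining $i_p-z$ zeros form a composition into $j_p$ positive parts in $\binom{i_p-z-1}{j_p-1}$ ways, and summing over $z$ gives $\sum_{w=j_p}^{i_p}\binom{w-1}{j_p-1}=\binom{i_p}{j_p}$ by the hockey-stick identity. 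Hence $c_{\alpha,\beta}=\prod_{p}\binom{i_p}{j_p}$, the surviving $\beta=(0^{j_1},\beta_1,\dots,0^{j_k},\beta_k)$ with $j_p\leq i_p$ and $\beta_p\models s_p$ is exactly the condition $\beta\preceq\alpha$, and $c_{\alpha,\alpha}=1$. I expect this per-block count — correctly tracking the absorbed zeros and matching the result to the definition of $\preceq$ — to be the main obstacle; the rest is formal.

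For the basis assertion I would argue by unitriangularity. Formula \eqref{fmalphas} reads $F_\alpha=M_\alpha+\sum_{\beta\prec\alpha}c_{\alpha,\beta}M_\beta$, and for each fixed $\alpha$ the down-set $\{\beta:\beta\preceq\alpha\}$ is finite, so every such sum is finite. For linear independence, take a finite relation $\sum_{\alpha\in S}d_\alpha F_\alpha=0$, expand it in the basis $\{M_\beta\}$ of Lemma~\ref{qsym0basis}, and pick $\alpha_0$ maximal in $S$ under $\preceq$; the coefficient of $M_{\alpha_0}$ then receives a contribution only from $\alpha_0$ (any other contributor $\alpha$ would satisfy $\alpha_0\prec\alpha$, contradicting maximality), forcing $d_{\alpha_0}=d_{\alpha_0}c_{\alpha_0,\alpha_0}=0$, and iteration gives $d_\alpha=0$ for all $\alpha$. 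For spanning, on each finite down-set $\{\beta:\beta\preceq\gamma\}$ the matrix $(c_{\alpha,\beta})$ is unitriangular, hence invertible, so $M_\gamma$ lies in the span of $\{F_\beta:\beta\preceq\gamma\}$; as the $M_\gamma$ span $\EQSYM$, so do the $F_\alpha$. Here I would stress that the graded pieces $\EQSYM_n$ have infinite rank, so the argument cannot use finite-dimensionality of graded components; what makes the triangular inversion legitimate is solely the finiteness of the down-sets of $\preceq$. Adjoining $F_\emptyset=1$ handles the degree-zero part and completes the proof.
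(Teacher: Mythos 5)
Your proof is correct, and its overall skeleton --- expanding $F_\alpha$ from \eqref{fundmbasis1} over the $M_\beta$ by resolving the weak inequalities in \eqref{ni<leqni+1}, then inverting a unitriangular transition matrix supported on the finite down-sets of $\preceq$ --- is the same as the paper's; the difference lies in how the coefficient $c_{\alpha,\beta}$ is extracted. The paper proceeds in two stages: it first resolves only the weak relations along each zero chain, where choosing which $j_p$ of the $i_p$ relations become strict yields the factor $\binom{i_p}{j_p}$ directly with no summation, reducing $F_\alpha$ to an auxiliary series $\widetilde{F}_{(0^{j_1},s_1,\cdots,0^{j_k},s_k)}$ with strict zeros, and then it quotes the classical quasi-symmetric expansion $F_L=\sum_{K\preceq L}M_K$ to handle the positive runs. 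You instead stratify by the complete equality pattern in a single pass and enumerate the zero-splittings within each block by the number $z$ of zeros absorbed into the first positive group, recovering $\binom{i_p}{j_p}$ via the hockey-stick sum $\sum_{w=j_p}^{i_p}\binom{w-1}{j_p-1}=\binom{i_p}{j_p}$; your count is right (zeros necessarily precede the ones in a block, so they can only be absorbed by the first positive group, and the strict signs between blocks decouple the blocks, giving the product formula), and it has the merit of re-proving the classical composition case as the instance $i_p=0$ rather than citing it, at the cost of a slightly longer enumeration. Your basis argument (maximal-element extraction for independence, inversion on finite down-sets for spanning) matches the paper's triangularity proof, and your emphasis that the finiteness of down-sets --- not finite rank of the graded pieces $\EQSYM_n$, which is infinite --- is what legitimizes the inversion is exactly the right point.
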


As an example of Eq.~(\ref{fmalphas}), we have
\begin{align*}
 {F}_{(0^2,2)}&= {M}_{(0^2,2)}+2 {M}_{(0,2)}+ {M}_{2}+ {M}_{(0^2,1,1)}+2 {M}_{(0,1,1)}+ {M}_{(1,1)}.
\end{align*}

\begin{proof}
For the given left weak composition $\alpha$, denote $a_p:=i_1+s_1+\cdots+i_p+s_p$, $p=1,2,\cdots,k$.
Define
\begin{align*}
\widetilde{ {F}}_{\alpha}=\sum x_{n_1}^0\cdots x_{n_{i_1}}^0 x_{n_{i_1+1}}\cdots x_{n_{a_1}}
\cdots x_{n_{a_{k-1}+1}}^0\cdots x_{n_{a_{k-1}+i_{k}}}^0 x_{n_{a_{k-1}+i_{k}+1}}\cdots x_{n_{a_k}},
\end{align*}
where the summation is subject to the condition in Eq.~\eqref{ni<leqni+1} and
\begin{align*}
n_{a_p+1}<\cdots<n_{a_p+i_{p+1}}<n_{a_p+i_{p+1}+1},\qquad p=0,1,\cdots, k-1,
\end{align*}
with the convention that $a_0=0$. In Eq.~\eqref{fundmbasis1}, we can choose $j_{p+1}$ inequalities in the condition
$n_{a_p+1}\leq n_{a_p+2}\leq\cdots\leq n_{a_p+i_{p+1}}\leq n_{a_p+i_{p+1}+1}$,  where $0\leq j_{p+1}\leq i_{p+1}$, $0\leq p\leq k-1$.
Hence we have
\begin{align*}
 {F}_{\alpha}
=&\sum_{j_1,\cdots,j_{k}}\binom{i_1}{j_1}\cdots
\binom{i_k}{j_k} \widetilde{ {F}}_{(0^{j_1},s_1,\cdots,0^{j_k},s_k)},
\end{align*}
where the summation is subject to the condition $0\leq j_p\leq i_p$, $p=1,\cdots,k$.
Then, by applying the relation of monomial quasi-symmetric functions $M_K$ and fundamental quasi-symmetric functions $F_L$, that is,
$F_L=\sum_{K\preceq L}M_K$, we obtain
\begin{align*}
 {F}_{\alpha}=
\sum_{j_1,\cdots,j_{k}}\binom{i_1}{j_1}\cdots
\binom{i_k}{j_k} \sum_{\alpha_{11},\cdots,\alpha_{1r_1},\cdots,\alpha_{k1},\cdots,
\alpha_{kr_k}}  {M}_{(0^{j_1},\alpha_{11},\cdots,\alpha_{1r_1},\cdots,0^{j_k},\alpha_{k1},\cdots,
\alpha_{kr_k})},
\end{align*}
where $(\alpha_{p1},\cdots,\alpha_{pr_p})$ runs through all compositions of $s_p$, $p=1,\cdots,k$, so that $(\alpha_{p1},\cdots,\alpha_{pr_p})\preceq s_p$.
Hence
$$(0^{j_1},\alpha_{11},\cdots,\alpha_{1r_1},\cdots,0^{j_k},\alpha_{k1},\cdots,a_{kr_k})\preceq\alpha.$$
Thus, Eq.~\eqref{fmalphas} follows and $c_{\alpha,\alpha}=1$ is clearly true.

Since $\EQSYM=\bigoplus\limits_{n\geq 0}\EQSYM_n$, to prove the last statement, we only need to show that, for each $n\geq 1$,
\begin{align*}
\{ {F}_{\alpha}|\alpha\ {\rm is\ a\ left\ weak\ composition\ of}\ n\}
\end{align*}
is a basis for $\EQSYM_n$.

By the definition of the partial order $\preceq$, for any given left weak composition $\alpha$ of size $n$, there are only finite many left weak compositions less than $\alpha$. These left weak compositions also have size $n$.
It follows from Eq.~\eqref{fmalphas} that
the transition matrix which expresses the $ {F}_{\alpha}$ in terms of the $ {M}_{\beta}$,
with respect to any linear order of left weak compositions that extends $\preceq$, is upper triangular with $1$ on the main diagonal. So it is an invertible matrix, which shows that
$ {F}_{\alpha}$ is a basis for $\EQSYM_n$ knowing that $M_\alpha$ for $\alpha$ of size $n$ is a basis of $\EQSYM_n$.
\end{proof}

Indeed, as in the case of quasi-symmetric functions, we can also express $ {M}_\alpha$ in terms of the basis $ {F}_\beta$.

\begin{theorem}\label{fundamentalbastomonomialbs}
 For any left weak composition $\alpha$, we have
\begin{align}\label{fphasmaleq}
 {M}_\alpha=\sum_{\beta\preceq\alpha}(-1)^{\ell(\beta)-\ell(\alpha)}c_{\alpha,\beta}  {F}_\beta.
\end{align}
\end{theorem}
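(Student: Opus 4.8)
The plan is to prove \eqref{fphasmaleq} by direct substitution, using the already-established expansion \eqref{fmalphas} of $F_\beta$ in the monomial basis and reducing everything to a single inversion identity in the incidence algebra of the locally finite poset $(\LWC,\preceq)$. Write the claimed right-hand side as $\sum_{\beta\preceq\alpha}(-1)^{\ell(\beta)-\ell(\alpha)}c_{\alpha,\beta}F_\beta$, substitute $F_\beta=\sum_{\gamma\preceq\beta}c_{\beta,\gamma}M_\gamma$ from Theorem~\ref{fundamentalbasmonomialbs}, and interchange the order of summation (legitimate because, as noted in that proof, each $\alpha$ dominates only finitely many $\beta$) to collect the coefficient of each $M_\gamma$ with $\gamma\preceq\alpha$. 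It then suffices to prove the combinatorial identity
\[
\sum_{\gamma\preceq\beta\preceq\alpha}(-1)^{\ell(\beta)-\ell(\alpha)}c_{\alpha,\beta}c_{\beta,\gamma}=\delta_{\alpha,\gamma},
\]
since then only the $\gamma=\alpha$ term survives and the right-hand side collapses to $M_\alpha$.

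To establish this identity, write $\alpha=(0^{i_1},s_1,\cdots,0^{i_k},s_k)$ and $\gamma=(0^{h_1},\gamma_1,\cdots,0^{h_k},\gamma_k)$ with $\gamma_p\models s_p$ and $0\le h_p\le i_p$ (the shape forced by $\gamma\preceq\alpha$), and parametrise the intermediate $\beta$ block by block as $\beta=(0^{j_1},\beta_1,\cdots,0^{j_k},\beta_k)$ with $h_p\le j_p\le i_p$ and $\gamma_p\preceq\beta_p\models s_p$. The crucial observation is that, although the positive parts of $\alpha$ may be refined in $\beta$ so that $\beta$ carries a finer block decomposition, the coefficient $c_{\beta,\gamma}$ only absorbs binomial factors from the leading zeros of each $\alpha$-block: every later sub-block of $\beta$ inside a given $\alpha$-block has no leading zeros and hence contributes $\binom{0}{0}=1$. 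Consequently $c_{\alpha,\beta}=\prod_p\binom{i_p}{j_p}$, $c_{\beta,\gamma}=\prod_p\binom{j_p}{h_p}$, and $\ell(\beta)-\ell(\alpha)=\sum_p\big[(j_p-i_p)+(\ell(\beta_p)-1)\big]$. The summand therefore factors completely across the $k$ blocks, and within each block across the zero-data $j_p$ and the composition-data $\beta_p$, yielding
\[
\prod_{p=1}^{k}\left(\sum_{j_p=h_p}^{i_p}(-1)^{j_p-i_p}\binom{i_p}{j_p}\binom{j_p}{h_p}\right)\left(\sum_{\gamma_p\preceq\beta_p\models s_p}(-1)^{\ell(\beta_p)-1}\right).
\]

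The two inner sums are then evaluated by standard identities. The first equals $\delta_{i_p,h_p}$: using the absorption $\binom{i_p}{j_p}\binom{j_p}{h_p}=\binom{i_p}{h_p}\binom{i_p-h_p}{j_p-h_p}$ together with $(1-1)^{i_p-h_p}$, the alternating sum vanishes unless $i_p=h_p$, in which case it is $1$. The second equals $\delta_{\gamma_p,(s_p)}$: under the bijection $\set$ between compositions of $s_p$ and subsets of $[s_p-1]$, refinement corresponds to reverse inclusion and $\ell(\beta_p)-1=|\set(\beta_p)|$, so the sum becomes $\sum_{\set(\beta_p)\subseteq\set(\gamma_p)}(-1)^{|\set(\beta_p)|}$, which vanishes unless $\set(\gamma_p)=\emptyset$, i.e. $\gamma_p=(s_p)$; this is precisely the defining alternating-sum property of the M\"obius function of the Boolean lattice. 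Multiplying the block contributions gives $\prod_p\delta_{i_p,h_p}\delta_{\gamma_p,(s_p)}=\delta_{\alpha,\gamma}$, as required.

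The step I expect to be the main obstacle is the bookkeeping in the second paragraph: one must verify carefully that reading off $c_{\beta,\gamma}$ through the block decomposition of $\beta$ is compatible with that of $\alpha$, i.e. that refining a positive part never creates a new zero-block and hence never spoils the clean factorisation $c_{\beta,\gamma}=\prod_p\binom{j_p}{h_p}$. Once this compatibility is made precise, the remainder is the routine evaluation of the two classical sums above. Equivalently, the whole argument may be phrased as verifying that the matrix $\big((-1)^{\ell(\beta)-\ell(\alpha)}c_{\alpha,\beta}\big)$ is the two-sided inverse of $(c_{\alpha,\beta})$ in the incidence algebra of $(\LWC,\preceq)$, which is meaningful because $\preceq$ is locally finite; the underlying computation is identical.
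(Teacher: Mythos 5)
Your proposal is correct and rests on exactly the same computation as the paper's proof: the orthogonality identity $\sum_{\beta\preceq\gamma\preceq\alpha}(-1)^{\ell(\gamma)-\ell(\alpha)}c_{\alpha,\gamma}c_{\gamma,\beta}=\delta_{\alpha,\beta}$, evaluated blockwise via the absorption identity $\binom{m}{n}\binom{n}{q}=\binom{m}{q}\binom{m-q}{n-q}$ with the alternating sum $(1-1)^{i_p-t_p}$, and the Boolean-lattice M\"obius argument through the bijection $\set$ between compositions of $s_p$ and subsets of $[s_p-1]$. The only difference is packaging: the paper wraps this identity in an induction on $\preceq$ (with base case $M_{1^n}=F_{1^n}$ and a separate reduction when $\alpha$ is an honest composition), whereas you substitute and interchange sums directly, which is a cosmetic simplification rather than a different route.
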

For example,
\begin{align*}
M_{(0^2,2)}&= {F}_{(0^2,2)}-2 {F}_{(0,2)}+ {F}_{2}- {F}_{(0^2,1,1)}+2 {F}_{(0,1,1)}- {F}_{(1,1)}.
\end{align*}

\begin{proof}
By the definition of the order $\preceq$, there are only finite left weak compositions less than $\alpha$, and
$1^n$ is the least element of the set of left weak compositions with size $n$.
Clearly, $ {M}_{1^n}= {F}_{1^n}$.
We next prove that Eq.~\eqref{fphasmaleq} holds for a left weak composition $\alpha$ provide it holds for
all left weak compositions less than $\alpha$.

From Eq.~\eqref{fmalphas} we obtain
$ {M}_\alpha= {F}_\alpha-\sum\limits_{\gamma\prec\alpha}c_{\alpha,\gamma}  {M}_\gamma$,
which together with the induction hypothesis yields
\begin{align}\label{malpahseqbga}
 {M}_\alpha=& {F}_\alpha-\sum_{\gamma\prec\alpha}c_{\alpha,\gamma}
\sum_{\beta\preceq\gamma}(-1)^{\ell(\beta)-\ell(\gamma)}c_{\gamma,\beta}  {F}_\beta\cr
=& {F}_\alpha-\sum_{\beta\preceq\gamma\preceq\alpha}(-1)^{\ell(\beta)-\ell(\gamma)}c_{\alpha,\gamma}
c_{\gamma,\beta}  {F}_\beta
+\sum_{\beta\preceq\gamma=\alpha}(-1)^{\ell(\beta)-\ell(\gamma)}c_{\alpha,\gamma}
c_{\gamma,\beta}  {F}_\beta\cr
=& {F}_\alpha-\sum_{\beta\preceq\alpha}\left(\sum_{\beta\preceq\gamma\preceq\alpha}
(-1)^{\ell(\beta)-\ell(\gamma)}c_{\alpha,\gamma}
c_{\gamma,\beta} \right) {F}_\beta
+\sum_{\beta\preceq\alpha}(-1)^{\ell(\beta)-\ell(\alpha)}
c_{\alpha,\beta}  {F}_\beta,
\end{align}
since $\beta\preceq\gamma\preceq\alpha$ can be divided into two disjoint cases $\beta\preceq\gamma\prec\alpha$ and $\beta\preceq\gamma=\alpha$.

Let $\alpha=(0^{i_1},s_1,\cdots,0^{i_k},s_k)$ be a left weak composition of $n$ where $s_p>0$, $1\leq p\leq k$.
Assume that $\beta=(0^{t_1},\beta_1,\cdots,0^{t_k},\beta_k)$,
$\gamma=(0^{j_1},\gamma_1,\cdots,0^{j_k},\gamma_k)$ with $\beta\preceq\gamma\preceq\alpha$.
Then for each $p$ with $1\leq p\leq k$, $\beta_p$ and $\gamma_p$ are compositions of $s_p$ with $\beta_p\preceq\gamma_p$.
Denote $\overline{\alpha}=(s_1,\cdots,s_k)$ and analogously $\overline{\beta}=({\beta_1},\cdots,{\beta_k})$
and $\overline{\gamma}=(\gamma_1,\cdots,\gamma_k)$.
Then $\ell(\alpha)=\ell(\overline{\alpha})+i_1+\cdots+i_{k}$.
If $\ell(\alpha)=\ell(\overline{\alpha})$, then $\alpha$ is a composition so that $ {M}_\alpha$ is a monomial  quasi-symmetric
function and $ {F}_\alpha$ is a fundamental quasi-symmetric functions.  So $c_{\alpha,\beta}=1$ and hence Eq.~\eqref{fphasmaleq} reduces to
$ {M}_\alpha=\sum\limits_{\beta\preceq\alpha}(-1)^{\ell(\beta)-\ell(\alpha)} {F}_\beta$, which
holds as a well-known fact for quasi-symmetric functions.
Next we suppose that at least one of $i_p$ is positive, $1\leq p\leq k$.
Thus, for fixed $\alpha$ and $\beta$, we have
\begin{align*}
\sum_{\beta\preceq\gamma\preceq\alpha}(-1)^{\ell(\gamma)}
c_{\alpha,\gamma}c_{\gamma,\beta}
=&\sum_{\beta\preceq\gamma\preceq\alpha}(-1)^{\ell(\overline{\gamma})+j_1+\cdots+j_{k}}
\binom{i_1}{j_1}\cdots
\binom{i_k}{j_k} \binom{j_1}{t_1}\cdots
\binom{j_k}{t_k}.
\end{align*}
Notice that $\beta\preceq\gamma\preceq\alpha$ is equivalent to $\overline{\beta}\preceq\overline{\gamma}\preceq\overline{\alpha}$
and $t_p\leq j_p\leq i_p$ for all $1\leq p\leq k$, so
\begin{align}\label{coefficcalphabetagamms}
\sum_{\beta\preceq\gamma\preceq\alpha}(-1)^{\ell(\gamma)}
c_{\alpha,\gamma}c_{\gamma,\beta}
=&\sum_{ \overline{\beta}\preceq\overline{\gamma}\preceq\overline{\alpha}}
(-1)^{\ell(\overline{\gamma})}\times
\prod_{p=1}^k\left(\sum_{j_p=t_p}^{i_p}(-1)^{j_p}
\binom{i_p}{j_p}\binom{j_p}{t_p}\right).
\end{align}
Applying the identities
\begin{align*}
\binom{m}{n}\binom{n}{q}=\binom{m}{q}\binom{m-q}{n-q} \qquad {\rm and}
\qquad \sum_{n=0}^{m}(-1)^{n}\binom{m}{n}
=\begin{cases}
0,&m> 0,\\
1,&m=0,
\end{cases}
\end{align*}
we obtain
\begin{align*}
\sum_{j_p=t_p}^{i_p}(-1)^{j_p}\binom{i_p}{j_p}\binom{j_p}{t_p}
=(-1)^{t_p}\binom{i_p}{t_p}\sum_{j_p=t_p}^{i_p}(-1)^{j_p-t_p}\binom{i_p-t_p}{j_p-t_p}=
\begin{cases}
0,& i_p>t_p,\\
(-1)^{i_p},& i_p=t_p,
\end{cases}
(1\leq p\leq k),
\end{align*}
which yields $\sum\limits_{\beta\preceq\gamma\preceq\alpha}(-1)^{\ell(\gamma)}
c_{\alpha,\gamma}c_{\gamma,\beta}=0$ if there exists $p$ with $1\leq p\leq k$ such that $i_p>t_p$. Otherwise, we have
$i_p=t_p$ for all $1\leq p\leq k$ so that Eq.~\eqref{coefficcalphabetagamms} becomes
\begin{align}\label{coeeqcases21}
\sum_{\beta\preceq\gamma\preceq\alpha}(-1)^{\ell(\gamma)}
c_{\alpha,\gamma}c_{\gamma,\beta}
=&\begin{cases}
0,& {\rm there\ exists}\ 1\leq p\leq k\ {\rm such\ that}\ i_p>t_p,\\
\sum\limits_{\overline{\beta}\preceq\overline{\gamma}\preceq\overline{\alpha}}(-1)^{\ell(\overline{\gamma})}\times
\prod_{p=1}^{k}(-1)^{i_p}, &i_p=t_p\ {\rm for\ all}\ 1\leq p\leq k;
\end{cases}\cr
=&\begin{cases}
0,& {\rm there\ exists}\ 1\leq p\leq k\ {\rm such\ that}\ i_p>t_p,\\
(-1)^{i_1+\cdots+i_{k}}\sum\limits_{\overline{\beta}\preceq\overline{\gamma}\preceq\overline{\alpha}}(-1)^{\ell(\overline{\gamma})},
&i_p=t_p\ {\rm for\ all}\ 1\leq p\leq k.
\end{cases}
\end{align}
Noting that $s_p$ are positive integers, $\overline{\beta}\preceq\overline{\gamma}\preceq\overline{\alpha}$
means that $\beta_p$ and $\gamma_p$ are compositions of $s_p$ with $\beta_p\preceq\gamma_p\preceq s_p$, $1\leq p\leq k$.
So we can write
\begin{align}\label{coeeqcases22}
\sum_{\overline{\beta}\preceq\overline{\gamma}\preceq\overline{\alpha}}(-1)^{\ell(\overline{\gamma})}
=\sum_{\overline{\beta}\preceq\overline{\gamma}\preceq\overline{\alpha}}
(-1)^{\ell({\gamma_1})+\cdots+\ell({\gamma_k})}
=\prod_{p=1}^{k}\left(\sum_{{\beta_p}\preceq {\gamma_p}\preceq {s_p}}(-1)^{\ell(\gamma_p)}\right).
\end{align}

Recall that the compositions of $n$ are in one-to-one correspondence with the subsets of $\{1,2,\cdots$,
$n-1\}$,
and the bijection is given by sending a composition $I$ to $\set(I)$.
So $\ell(I)=\sharp \set(I)+1$ and $\set(I)\supseteq \set(J)$ if and only if $I\preceq\ J$, where $\sharp \set(I)$ denotes the cardinality of the set $\set(I)$.
In particular,  since $s_p$ is a positive integer,
we have $\set(s_p)=\emptyset$. Then for fixed $\beta_p$ and $s_p$, we obtain
\begin{align*}
\sum_{\beta_p\preceq\gamma_p\preceq s_p}(-1)^{\ell(\gamma_p)}
=\sum_{\set(\beta_p)\supseteq \set(\gamma_p)\supseteq \emptyset}(-1)^{\sharp \set(\gamma_p)+1}
=\begin{cases}
0,& \beta_p\prec s_p,\\
-1, & \beta_p=s_p.
\end{cases}
\end{align*}
Then, by Eqs.~\eqref{coeeqcases21} and \eqref{coeeqcases22}, we see that for fixed $\alpha$ and $\beta$ with $\beta\preceq \alpha$,
\begin{align*}
\sum_{\beta\preceq\gamma\preceq\alpha}(-1)^{\ell(\gamma)}
c_{\alpha,\gamma}c_{\gamma,\beta}
=\begin{cases}
0,& \beta\prec\alpha,\\
(-1)^{k+i_1+\cdots+i_{k}},& \beta=\alpha;
\end{cases}
=\begin{cases}
0,& \beta\prec\alpha,\\
(-1)^{\ell(\alpha)},& \beta=\alpha.
\end{cases}
\end{align*}
Hence,
\begin{align*}
\sum_{\beta\preceq\alpha}\left(\sum_{\beta\preceq\gamma\preceq\alpha}
(-1)^{\ell(\beta)-\ell(\gamma)}c_{\alpha,\gamma}
c_{\gamma,\beta} \right) {F}_\beta
=&\sum_{\beta\preceq\alpha}(-1)^{\ell(\beta)} \left(\sum_{\beta\preceq\gamma\preceq\alpha}
(-1)^{\ell(\gamma)}c_{\alpha,\gamma}
c_{\gamma,\beta} \right) {F}_\beta= {F}_\alpha.
\end{align*}

Note that $c_{\alpha,\alpha}=1$, so Eq.~\eqref{malpahseqbga} eventually reduces to
\begin{align*}
 {M}_\alpha=&\sum_{\beta\preceq\alpha}(-1)^{\ell(\beta)-\ell(\alpha)}
c_{\alpha,\beta}  {F}_\beta,
\end{align*}
giving the desired result. This completes the proof.
\end{proof} \vspace{3mm}

\section{Application to multiple zeta values and their $q$-analogs}\label{sec:application}

In this section, we extend the connection of quasi-symmetric functions with MZVs recalled in  Eq.~\eqref{eq:mzv} to a connection of \eqsyms with a class of special values in multiple variables, called the left weak composition (\lwc) MZVs, and with $q$-MZVs.

\noindent
{\bf Convention.} Throughout this section, we adapt the convention that $\binom{p}{q}=0$ if $p<q$ or $q<0$.

\subsection{Left weak composition multiple zeta values}\label{sec:extendMZV}
We introduce the concept of a left weak composition MZV and apply \eqsyms to establish a
formula for left weak composition MZVs.

Let  $\alpha=(0^{i_1},s_1,\cdots,0^{i_k},s_k)$ be a left weak composition, where $i_p\geq0$, $s_p>0$ and
$p=1,\cdots,k$.
Recall from Lemma \ref{lemmalphamon} that the left weak monomial quasi-symmetric function $M_\alpha$ is given by
\begin{equation*}
 {M}_{(0^{i_1},s_1,0^{i_2},s_2,\cdots,0^{i_k},s_k)}
=\sum_{1\leq n_1<n_2<\cdots<n_k}\binom{n_1-1}{i_1}\binom{n_2-n_1-1}{i_2}\cdots\binom{n_k-n_{k-1}-1}{i_k}x_{n_1}^{s_1}x_{n_2}^{s_2}\cdots
x_{n_k}^{s_k}.
\end{equation*}
In analogous to the relationship between quasi-symmetric functions and MZVs recalled in Eq.~(\ref{eq:mzv}), this leads to the following left weak MZVs.

\begin{defn}
Let $\alpha=(0^{i_1},s_1,\cdots,0^{i_k},s_k)$ be a left weak composition and denote $I=(i_1,\cdots,i_k)$.
Define the {\bf \emzv} (\lwcmzv for short)
\begin{equation}
\zeta(\alpha)=\zeta(s_1,\cdots,s_k;I):=\sum_{1\leq n_1<n_2<\cdots<n_k}\binom{n_1-1}{i_1} \binom{n_2-n_1-1}{i_2}\cdots\binom{n_k-n_{k-1}-1}{i_k} \frac{1}{n_1^{s_1}n_2^{s_2} \cdots n_k^{s_k}}.
\mlabel{eq:emzv}
\end{equation}
\end{defn}
It converges for integers $s_p\geq i_p+2, 1\leq p\leq k.$
When $i_p$ is zero for each $p$ with $1\leq p\leq k$, we recover the usual MZVs.

By Theorem \ref{thmmonproduct}, \lwcmzvs satisfy the quasi-shuffle rule, that is, $\zeta(\alpha)\zeta(\beta)=\zeta(\alpha*\beta)$ if $\alpha,\beta$ are
left weak compositions.
Therefore we obtain, for example,
$$\zeta(a;1)\zeta(a;1)=2\zeta(a,a;1,1)+4\zeta(a,a;2,0) +4\zeta(a,a;1,0) +2\zeta(2a,2)+\zeta(2a;1).$$
We need some preparation before proving a general formula.

\begin{lemma}\label{00bstuff}
Let $b, m,n$ be nonnegative integers. Then
\begin{align}\label{eq00bst6uff}
0^m*(0^n,b)=\sum_{i=0}^{m}\sum_{k=n}^{m+n-i}\binom{k}{n}
\binom{n+1}{i+k-m+1}(0^k,b,0^i).
\end{align}
\end{lemma}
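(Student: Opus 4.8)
The plan is to compute the coefficient of each resulting weak composition directly from the combinatorial (quasi-shuffle) unfolding of the recursion defining $*$. Unfolding Eq.~\eqref{compqsh} for the two words $0^m=(0,\dots,0)$ and $(0^n,b)=(0,\dots,0,b)$ expresses $0^m*(0^n,b)$ as the sum, over all ways to interleave the two words while optionally merging (adding) a letter of the first word with a letter of the second, of the resulting word. Since every letter of $0^m$ is $0$ and the only nonzero letter of $(0^n,b)$ is the final $b$ (and $0+0=0$, $0+b=b$), every term of this sum has exactly one nonzero entry, equal to $b$, and hence has the form $(0^k,b,0^i)$. Thus it suffices to show that the number of such interleavings producing a fixed $(0^k,b,0^i)$ equals $\binom{k}{n}\binom{n+1}{i+k-m+1}$.

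To count these, I would read off the target word from left to right. The $i$ zeros after $b$ must all come from $0^m$, since the second word is exhausted once its final letter $b$ is consumed; they appear in order with no merges and so contribute no choice. The letter $b$ is either placed alone or merged with a single letter (necessarily a $0$) of $0^m$; write $j\in\{0,1\}$ for this binary choice, so that exactly $j$ letters of $0^m$ are used at the $b$-step. The remaining $m-i-j$ zeros of $0^m$ are then interleaved, with $t$ merges, against all $n$ zeros of $(0^n,b)$ to form the leading block $0^k$, where $t=(m-i-j)+n-k$. The number of such interleavings is the multinomial $\frac{k!}{(k-n)!\,(k+i-m+j)!\,(m+n-k-i-j)!}$, whose three denominator factors count the first-word-only, second-word-only, and merge steps respectively.

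Finally I would simplify: since $(k+i-m+j)+(m+n-k-i-j)=n$, this multinomial factors as $\binom{k}{n}\binom{n}{k+i-m+j}$. Summing over the two admissible values $j=0,1$ and applying Pascal's rule $\binom{n}{k+i-m}+\binom{n}{k+i-m+1}=\binom{n+1}{k+i-m+1}$ yields the coefficient $\binom{k}{n}\binom{n+1}{i+k-m+1}$, as claimed; the stated summation ranges $0\le i\le m$ and $n\le k\le m+n-i$ are precisely those outside which these binomials vanish under the convention $\binom{p}{q}=0$ for $p<q$ or $q<0$.

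The main obstacle is the bookkeeping in the counting step: in particular, justifying that the single nonzero letter $b$ participates in at most one merge (so that $j\in\{0,1\}$) and correctly apportioning which zeros land before, at, and after $b$. Once that is set up cleanly, the collapse of the multinomial to a single application of Pascal's rule is routine. As an alternative I could instead induct on $m+n$ using the recursion~\eqref{compqsh} directly, splitting $0^m*(0^n,b)$ into its three recursive terms and substituting the inductive formula into each; there the obstacle shifts to reconciling the three resulting shifted binomial sums, again ultimately via Pascal's rule.
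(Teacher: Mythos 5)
Your proof is correct, but it proceeds by a genuinely different route than the paper, whose proof is exactly your stated fallback: an induction on $m+n$ that splits $0^m*(0^n,b)$ into the three terms of the recursion \eqref{compqsh}, substitutes the inductive formula into each, re-indexes ($j=k-1$), and merges the three resulting binomial products with Pascal's identity. What you do instead is a direct, closed-form count of the mixable-shuffle expansion: each term is a step sequence (left-letter, right-letter, or merge step), and since a merge step consumes exactly one letter from each word, the single letter $b$ participates in at most one merge --- so the ``obstacle'' you flag about $j\in\{0,1\}$ is in fact immediate from the recursion. Your bookkeeping checks out: the $i$ trailing zeros must come from $0^m$ with no choices once the right word is exhausted; the prefix $0^k$ is built from $k-n$ left-only steps, $k+i+j-m$ right-only steps and $m+n-k-i-j$ merges, giving the multinomial $\tfrac{k!}{(k-n)!\,(k+i+j-m)!\,(m+n-k-i-j)!}=\binom{k}{n}\binom{n}{k+i-m+j}$ because the last two denominator indices sum to $n$; and summing over $j=0,1$ via Pascal yields $\binom{k}{n}\binom{n+1}{i+k-m+1}$. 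Degenerate configurations (e.g.\ $j=1$ with $i=m$) are correctly annihilated by the convention $\binom{p}{q}=0$ for $p<q$ or $q<0$, which also justifies the stated summation ranges. Comparing the two: your counting argument is self-contained and explains the coefficient structurally --- $\binom{k}{n}$ locates the right-word contributions among the leading $k$ slots, while $\binom{n+1}{i+k-m+1}$ aggregates the merge pattern together with the binary choice at $b$ --- whereas the paper's induction is more mechanical, requires guessing the formula in advance, but needs no global model of the mixable-shuffle expansion and stays entirely within the recursive definition of $*$.
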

\begin{proof}
The case when at least one of $m,n$ is zero is trivial. Next we assume that $m,n\geq1$.
We use the induction on $m+n$ with $m,n\geq1$. The case of $m+n=2$, that is, $m=n=1$, can be verified directly.
Now suppose that Eq.~\eqref{eq00bst6uff} holds for $m+n< j$ and consider the case when $m+n=j$.
By the definition of the quasi-shuffle product and applying the induction hypothesis, we have
\begin{align*}
&0^m*(0^n,b)\\
=&(0,0^{m-1}*0^nb)+(0,0^m*0^{n-1}b)+(0,0^{m-1}*0^{n-1}b)\cr
=&\sum_{i=0}^{m-1}\sum_{j=n}^{m+n-i-1}\binom{j}{n}
\binom{n+1}{i+j-m+2}(0^{j+1},b,0^i)+\sum_{i=0}^{m}\sum_{j=n-1}^{m+n-i-1}\binom{j}{n-1}
\binom{n}{i+j-m+1}(0^{j+1},b,0^i)\cr
&+\sum_{i=0}^{m-1}\sum_{j=n-1}^{m+n-i-2}\binom{j}{n-1}
\binom{n}{i+j-m+2}(0^{j+1},b,0^i)\cr
=&
\sum_{i=0}^{m}\sum_{k=n}^{m+n-i}\left(
\binom{k-1}{n}
\binom{n+1}{i+k-m+1}+\binom{k-1}{n-1}
\binom{n}{i+k-m}+\binom{k-1}{n-1}
\binom{n}{i+k-m+1}\right)
(0^k,b,0^i),
\end{align*}
where for the last equation, we take $j=k-1$ and use $\binom{p}{q}=0$ if $p<q$.
By Pascal's identity, one has
\begin{align*}
\binom{k-1}{n}
\binom{n+1}{i+k-m+1}+\binom{k-1}{n-1}
\binom{n}{i+k-m}+\binom{k-1}{n-1}
\binom{n}{i+k-m+1}
=
\binom{k}{n}
\binom{n+1}{i+k-m+1}.
\end{align*}
Therefore, we have
\begin{align*}
0^m*(0^n,b)=\sum_{i=0}^{m}\sum_{k=n}^{m+n-i}\binom{k}{n}
\binom{n+1}{i+k-m+1}(0^k,b,0^i),
\end{align*}
completing the induction.
\end{proof}

If $b=0$, we could obtain a formula for $0^m*0^n$.
\begin{coro}\label{oomn}
For any nonnegative integers $m,n$,
\begin{align*}
0^m*0^n=\sum_{k=n}^{m+n}\binom{k}{n}\binom{n}{k-m}0^{k}.
\end{align*}
\end{coro}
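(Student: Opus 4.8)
The plan is to obtain Corollary~\ref{oomn} as a direct specialization of Lemma~\ref{00bstuff} at $b=0$, followed by collecting like terms. First I would set $b=0$ in Eq.~\eqref{eq00bst6uff}. The point is that inserting a zero between a block of $k$ zeros and a block of $i$ zeros merely lengthens the string, so $(0^k,0,0^i)=0^{k+i+1}$, and every summand becomes a scalar multiple of the single power $0^{k+i+1}$. This immediately expresses $0^m*0^{n+1}$ as a linear combination of the powers $0^\ell$, but with each $0^\ell$ produced by many different splittings $(k,i)$; the remaining work is to gather these contributions.

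The key simplification I expect is that, writing $\ell=k+i+1$, the second binomial coefficient in Eq.~\eqref{eq00bst6uff} depends only on $\ell$ and not on the splitting: its upper entry $i+k-m+1$ equals $\ell-m$, so $\binom{n+1}{i+k-m+1}=\binom{n+1}{\ell-m}$ factors out of the inner sum. For fixed $\ell$ one is then left with $\binom{n+1}{\ell-m}\sum_{k}\binom{k}{n}$, where $k$ ranges over the values compatible with the length $\ell$ inside the summation bounds of the lemma. Tracking the constraints $0\le i\le m$ and $n\le k\le m+n-i$ under $i=\ell-1-k$ shows that $k$ runs over $\max(n,\ell-1-m)\le k\le \ell-1$, and that in the relevant range $m\le \ell\le m+n+1$ (where $\binom{n+1}{\ell-m}\neq 0$) the lower bound is simply $n$. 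The hockey-stick identity $\sum_{k=n}^{\ell-1}\binom{k}{n}=\binom{\ell}{n+1}$ then collapses the inner sum, giving the coefficient $\binom{\ell}{n+1}\binom{n+1}{\ell-m}$ for $0^\ell$. Relabeling $n+1\mapsto n$ and $\ell\mapsto k$ yields the stated formula.

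The main obstacle is purely bookkeeping: verifying that, for each fixed total length $\ell$, the doubly-indexed ranges of the lemma really do reduce to exactly $n\le k\le \ell-1$, with no boundary terms lost or double-counted, so that the hockey-stick identity applies cleanly. As an independent sanity check one can recover the answer combinatorially, since the coefficient of $0^k$ in $0^m*0^n$ counts interleavings of two all-zero strings using $m+n-k$ merges; classifying the $k$ slots into $k-n$ solo-from-the-first, $k-m$ solo-from-the-second, and $m+n-k$ merged slots gives the multinomial $\frac{k!}{(k-n)!(k-m)!(m+n-k)!}=\binom{k}{n}\binom{n}{k-m}$, in agreement with the formula derived from the lemma.
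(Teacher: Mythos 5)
Your proposal is correct and follows essentially the same route as the paper's proof: both specialize Lemma~\ref{00bstuff} at $b=0$, reindex by the total length of the resulting string of zeros, factor out the second binomial coefficient (which depends only on that total length), and collapse the remaining single-binomial sum with the hockey-stick/Pascal identity, the only cosmetic difference being that you apply the lemma to $(0^n,0)=0^{n+1}$ and relabel $n+1\mapsto n$ at the end, whereas the paper writes $0^n=(0^{n-1},0)$ at the outset. Your closing combinatorial verification that the coefficient of $0^k$ is the multinomial $\frac{k!}{(k-n)!\,(k-m)!\,(m+n-k)!}=\binom{k}{n}\binom{n}{k-m}$ is a nice independent check, though not needed for the argument.
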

\begin{proof}
It follows from Lemma \ref{00bstuff} that
\begin{align*}
0^m*0^{n}=&0^m*(0^{n-1},0)=\sum_{i=0}^{m}\sum_{j=n-1}^{m+n-i-1}\binom{j}{n-1}
\binom{n}{i+j-m+1}0^{j+i+1}.
\end{align*}
Taking $k=j+i+1$, then
\begin{align*}
0^m*0^{n}=\sum_{i=0}^{m}\sum_{k=n+i}^{m+n}\binom{k-i-1}{n-1}
\binom{n}{k-m}0^{k}.
\end{align*}
Exchanging the order of $i$ and $k$ in the summation, we obtain
\begin{align*}
0^m*0^{n}=\sum_{k=n}^{m+n}\sum_{i=0}^{k-n}\binom{k-i-1}{n-1}
\binom{n}{k-m}0^{k}.
\end{align*}
It follows from Pascal's identity that
\begin{align*}
\sum_{i=0}^{k-n}\binom{k-i-1}{n-1}
=&\sum_{i=0}^{k-n-1}\binom{k-i-1}{n-1}+\binom{n}{n}=\binom{k}{n},
\end{align*}
so that
\begin{align*}
0^m*0^{n}
=\sum_{k=n}^{m+n}\binom{k}{n}\binom{n}{k-m}0^{k},
\end{align*}
as required.
\end{proof} \vspace{3mm}

Combining Lemma \ref{00bstuff} and Corollary \ref{oomn}, we obtain an explicit formula for $(0^m,a)*(0^n,b)$.
\begin{coro}\label{omastuffle0mb}
Let $a,b,m,n$ be nonnegative integers. Then
\begin{align*}
(0^m,a)*(0^n,b)
&=\sum_{i=0}^{m}\sum_{k=n}^{m+n-i}\binom{k}{n}
\binom{n+1}{i+k-m+1}(0^k,b,0^i,a)\cr
&+\sum_{i=0}^{n}\sum_{k=m}^{m+n-i}\binom{k}{m}
\binom{m+1}{i+k-n+1}(0^k,a,0^i,b) +\sum_{k=n}^{m+n}\binom{k}{n}\binom{n}{k-m}(0^{k},a+b).
\end{align*}
\end{coro}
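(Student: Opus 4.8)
The plan is to sort the terms of the quasi-shuffle $(0^m,a)*(0^n,b)$ according to what happens to the two trailing entries $a$ and $b$. In any single mixable shuffle of $(0^m,a)$ and $(0^n,b)$ the relative order within each factor is preserved, so the last letter of the resulting word is necessarily the trailing $a$, the trailing $b$, or the merged entry $a+b$; no other letter can occupy the final position. This produces a decomposition of the product that mirrors, from the right, the left-to-right recursion of Eq.~\eqref{compqsh}:
\begin{align*}
(0^m,a)*(0^n,b)=\big(0^m*(0^n,b),\,a\big)+\big((0^m,a)*0^n,\,b\big)+\big(0^m*0^n,\,a+b\big),
\end{align*}
where $(S,c)$ means appending the letter $c$ to every word occurring in the formal sum $S$. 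I would justify this identity either by the provenance-based case analysis just sketched, which partitions the mixable shuffles into the three groups, or by invoking the reversal symmetry $\overline{u*v}=\overline{u}*\overline{v}$ of the quasi-shuffle product together with the forward recursion Eq.~\eqref{compqsh}.

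With this decomposition in hand, each of the three groups is one of the already-computed products with a single letter appended. The first group $\big(0^m*(0^n,b),a\big)$ is evaluated by Lemma~\ref{00bstuff} and, upon appending $a$, yields exactly the first double sum of the claim. By commutativity of $*$, the second group equals $\big(0^n*(0^m,a),b\big)$; applying Lemma~\ref{00bstuff} after the substitution $m\leftrightarrow n$ and $b\mapsto a$, and then appending $b$, produces the second double sum, with binomial weight $\binom{k}{m}\binom{m+1}{i+k-n+1}$ and ranges $0\le i\le n$, $m\le k\le m+n-i$. The third group $\big(0^m*0^n,a+b\big)$ is evaluated by Corollary~\ref{oomn} and gives the final single sum after appending $a+b$. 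Summing the three contributions gives the stated formula, and one checks directly that the small case $m=n=1$ reproduces the expected expansion, which is a useful sanity test.

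The main obstacle is establishing the right-hand recursion rigorously: one must verify that the three provenance classes are exhaustive and pairwise disjoint as sets of mixable shuffles (equivalently, confirm the reversal symmetry of $*$), since everything downstream is a direct quotation of Lemma~\ref{00bstuff} and Corollary~\ref{oomn}. The only other point requiring care is the bookkeeping of the index ranges under the swap $m\leftrightarrow n$ in the second group; once these are matched, no further binomial manipulation is needed, in contrast to the Pascal-identity computations used to prove the two cited results.
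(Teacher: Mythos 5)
Your proposal is correct and takes essentially the same route as the paper: the paper's proof likewise splits off the last letter via $(0^m,a)*(0^n,b)=\big(0^m*(0^n,b),a\big)+\big((0^m,a)*0^n,b\big)+\big(0^m*0^n,a+b\big)$, uses commutativity of $*$, and then quotes Lemma~\ref{00bstuff} and Corollary~\ref{oomn} directly. Your provenance/reversal justification of this right-hand recursion simply spells out a step the paper treats as immediate.
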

\begin{proof}
It suffices to note that the quasi-shuffle product is commutative and
$(0^m,a)*(0^n,b)=(0^m*0^nb,a)+(0^ma*0^n,b)+(0^m*0^n,a+b)$. So the desired result follows from Lemma \ref{00bstuff} and Corollary \ref{oomn} directly.
\end{proof}

Now we can prove a quasi-shuffle formula for \lwcmzvs.
\begin{theorem}\label{binomazetafomucx}
Let $a,b,m,n$ be nonnegative integers with $a\geq m+2$, $b\geq n+2$. Then
\begin{align*}
\zeta(a;m)\zeta(b;n)=&\sum_{i=0}^{m}\sum_{k=n}^{m+n-i}\binom{k}{n}
\binom{n+1}{i+k-m+1}\zeta(b,a;k,i)\cr
&+\sum_{i=0}^{n}\sum_{k=m}^{m+n-i}\binom{k}{m}
\binom{m+1}{i+k-n+1}\zeta(a,b;k,i)+\sum_{k=n}^{m+n}\binom{k}{n}\binom{n}{k-m}\zeta(a+b; {k}).
\end{align*}
\end{theorem}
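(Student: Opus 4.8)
The plan is to deduce the identity directly from the quasi-shuffle rule for \lwcmzvs together with the explicit product already computed in Corollary~\ref{omastuffle0mb}, so that no new combinatorics is required. The first step is to record the dictionary between the values in the statement and their indexing left weak compositions, reading powers and zero-blocks off the grouped form $\alpha=(0^{i_1},s_1,0^{i_2},s_2,\cdots)$: namely $\zeta(a;m)=\zeta((0^m,a))$, $\zeta(b;n)=\zeta((0^n,b))$, $\zeta(b,a;k,i)=\zeta((0^k,b,0^i,a))$, $\zeta(a,b;k,i)=\zeta((0^k,a,0^i,b))$ and $\zeta(a+b;k)=\zeta((0^k,a+b))$.

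The main step is then a single application of the quasi-shuffle rule $\zeta(\alpha)\zeta(\beta)=\zeta(\alpha*\beta)$ recorded after Eq.~\eqref{eq:emzv} (itself a consequence of Theorem~\ref{thmmonproduct} under the evaluation $x_n\mapsto n^{-1}$), applied to $\alpha=(0^m,a)$ and $\beta=(0^n,b)$:
\begin{align*}
\zeta(a;m)\zeta(b;n)=\zeta\!\left((0^m,a)*(0^n,b)\right),
\end{align*}
where $\zeta$ is extended linearly to $\bfk\LWC$ as in Convention~\ref{con:index}. I would then substitute the expansion of $(0^m,a)*(0^n,b)$ given by Corollary~\ref{omastuffle0mb} and translate each summand back through the dictionary above; this reproduces the three sums of the statement verbatim, with identical binomial coefficients, since Lemma~\ref{00bstuff} and Corollaries~\ref{oomn} and~\ref{omastuffle0mb} have already performed all the binomial manipulation.

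The only genuine issue, and the step I expect to be the real obstacle, is convergence and the legitimacy of passing from the formal power-series identity $M_{(0^m,a)}M_{(0^n,b)}=\sum_{\gamma}c_\gamma M_\gamma$ (valid in $\bfk[[\frakX]]$ by Theorem~\ref{thmmonproduct}, with all $c_\gamma\geq 0$) to the corresponding numerical identity at $x_n=n^{-1}$. Because every $M_\gamma$ has nonnegative coefficients by Lemma~\ref{lemmalphamon} and $x_n=n^{-1}>0$, all series in sight have nonnegative terms, so the rearrangement is governed by Tonelli's theorem: the interchange of summation is unconditionally valid, and each right-hand \lwcmzv, being bounded above by the finite product $\zeta(a;m)\zeta(b;n)$, converges automatically. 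The hypotheses $a\geq m+2$ and $b\geq n+2$ are exactly what makes this product finite. If one prefers a direct verification, estimating the inner sum $\sum_{n_2>n_1}\binom{n_2-n_1-1}{i}n_2^{-a}=O(n_1^{\,i+1-a})$ and then the outer sum shows that each $\zeta(b,a;k,i)$ converges as soon as $a\geq i+2$ and $a+b\geq k+i+3$, and these follow from $0\leq i\leq m$ and $i+k\leq m+n$ (whence $a+b\geq m+n+4>k+i+3$); the same estimate handles the $\zeta(a,b;k,i)$ and $\zeta(a+b;k)$ terms.
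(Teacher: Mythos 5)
Your proposal matches the paper's own proof essentially verbatim: the paper likewise proves the theorem in one step by invoking the quasi-shuffle rule $\zeta(\alpha)\zeta(\beta)=\zeta(\alpha*\beta)$ under the evaluation $x_i\mapsto 1/i$ and substituting the expansion of $(0^m,a)*(0^n,b)$ from Corollary~\ref{omastuffle0mb}. Your added Tonelli-type convergence check is correct and is in fact a worthwhile refinement of a point the paper leaves implicit, since terms such as $\zeta(b,a;k,i)$ can have $k>b-2$, falling outside the blanket condition $s_p\geq i_p+2$ stated after Eq.~\eqref{eq:emzv}, yet still converge because $a\geq i+2$ and $a+b\geq k+i+3$ as you verify.
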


\begin{proof}
Since \lwcmzvs are obtained from \lwc monomial quasi-symmetric functions by evaluation $x_i\mapsto 1/i$, the multiplication of these \lwc quasi-symmetric functions satisfy the quasi-shuffle product. Thus the theorem follows from Corollary~\ref{omastuffle0mb}.
\end{proof}

We also obtain a relation between \lwcmzvs and MZVs.
Recall that  $s(i,k)$ is the Stirling number of the first kind defined by
the equation
\begin{align}\label{stirlingfirstkd1}
(t)_i:=t(t-1)\cdots (t-i+1)=\sum_{k=0}^i s(i,k) t^k.
\end{align}

\begin{theorem}
Let $a,b,m,n$ be nonnegative integers  with $a\geq m+2$ and $b\geq n+2$. Then
\begin{align*}
\zeta&(a;m)\zeta(b;n)=\sum_{k=0}^m \sum_{l=0}^n \frac{s(m,k)}{m!} \frac{s(n,l)}{n!} \zeta(a-k) \zeta(b-l)
-\sum_{k=0}^m \sum_{l=0}^n\frac{s(m,k)}{m!} \frac{s(n,l)}{(n-1)!}\zeta(a-k)\zeta(b-l+1)\cr
&-\sum_{k=0}^m\sum_{l=0}^n \frac{s(m,k)}{(m-1)!}\frac{s(n,l)}{n!}\zeta(a-k+1)\zeta(b-l)
+\sum_{k=0}^m \sum_{l=0}^n\frac{s(m,k)}{(m-1)!}\frac{s(n,l)}{(n-1)!}\zeta(a-k+1)\zeta(b-l+1).
\end{align*}
\end{theorem}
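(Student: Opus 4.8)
The plan is to exploit a hidden factorization: although the right-hand side is written as four double sums, these regroup into a single product of two one-variable expressions, which reduces the theorem to a depth-one identity. First I would check that the right-hand side equals
\[
\left(\sum_{k=0}^m \frac{s(m,k)}{m!}\zeta(a-k)-\sum_{k=0}^m \frac{s(m,k)}{(m-1)!}\zeta(a-k+1)\right)\!\left(\sum_{l=0}^n \frac{s(n,l)}{n!}\zeta(b-l)-\sum_{l=0}^n \frac{s(n,l)}{(n-1)!}\zeta(b-l+1)\right),
\]
where $1/(m-1)!$ is read as $0$ when $m=0$. Expanding this product of two differences term by term returns precisely the four double sums in the statement, so it is enough to prove the single identity
\[
\zeta(a;m)=\sum_{k=0}^m \frac{s(m,k)}{m!}\zeta(a-k)-\sum_{k=0}^m \frac{s(m,k)}{(m-1)!}\zeta(a-k+1)
\]
together with its twin in the variables $(b,n)$.

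To prove this depth-one identity, I would begin from the series $\zeta(a;m)=\sum_{n\geq1}\binom{n-1}{m}n^{-a}$ furnished by Eq.~\eqref{eq:emzv} and rewrite the binomial coefficient as a polynomial in $n$. The key algebraic step is the falling-factorial identity
\[
\binom{n-1}{m}=\frac{(n)_m}{m!}-\frac{(n-1)_{m-1}}{(m-1)!},
\]
which follows in one line from $(n)_m=n\,(n-1)_{m-1}$ via $\frac{n(n-1)_{m-1}}{m!}-\frac{(n-1)_{m-1}}{(m-1)!}=\frac{(n-1)_{m-1}(n-m)}{m!}=\frac{(n-1)_m}{m!}$. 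Applying the Stirling expansion \eqref{stirlingfirstkd1}, that is $(n)_m=\sum_{k=0}^m s(m,k)n^k$ and correspondingly $(n-1)_{m-1}=(n)_m/n=\sum_{k=0}^m s(m,k)n^{k-1}$ (valid since $s(m,0)=0$ for $m\geq1$, so no negative power of $n$ appears), turns this into $\binom{n-1}{m}=\frac1{m!}\sum_k s(m,k)n^k-\frac1{(m-1)!}\sum_k s(m,k)n^{k-1}$.

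It then remains to multiply by $n^{-a}$ and sum over $n\geq1$, interchanging the finite sum over $k$ with the summation over $n$. Because $a\geq m+2$ gives $a-k\geq a-m\geq 2$ and $a-k+1\geq 3$ for every $k$ in range, each resulting series is a convergent Riemann zeta value $\zeta(a-k)$ or $\zeta(a-k+1)$, and the depth-one identity drops out. Repeating verbatim in $(b,n)$ and expanding the bracketed product then completes the proof.

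I do not anticipate a real obstacle: the only non-clerical ingredient is the falling-factorial identity, which is elementary. The point requiring genuine care is the boundary case $m=0$ (and $n=0$), where one must use the convention $1/(m-1)!=0$ and the vanishing $s(m,0)=0$ for $m\geq1$ consistently — both to suppress the spurious $\zeta(a+1)$ contribution and to keep the factorization exact.
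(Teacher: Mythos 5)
Your proposal is correct and takes essentially the same route as the paper: both reduce the theorem to the depth-one identity $\zeta(a;m)=\sum_{k=0}^m \frac{s(m,k)}{m!}\zeta(a-k)-\sum_{k=0}^m \frac{s(m,k)}{(m-1)!}\zeta(a-k+1)$ by expanding $\binom{n-1}{m}$ through the Stirling numbers of the first kind and then multiply the two resulting brackets. Your falling-factorial split $\binom{n-1}{m}=\frac{(n)_m}{m!}-\frac{(n-1)_{m-1}}{(m-1)!}$ is just an algebraic rewriting of the paper's step $\binom{n-1}{m}=\frac{n(n-1)\cdots(n-m+1)}{m!}\cdot\frac{n-m}{n}=\sum_{k=0}^m\frac{s(m,k)}{m!}\bigl(n^k-mn^{k-1}\bigr)$, and your explicit care with the $m=0$ case only makes the paper's implicit convention precise.
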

\begin{proof}
From Eq.~\eqref{stirlingfirstkd1} it follows that for any nonnegative integers $i$ and $n$, we have
\begin{align*}
\binom{n-1}{i}&=\frac{n(n-1)\cdots (n-i+1)}{i!}\frac{n-i}{n}= \sum_{k=0}^i\frac{s(i,k)}{i!}(n^{k}-in^{k-1}).
\end{align*}
Plugging this into $\zeta(a;m)$, we obtain
$$
\zeta(a;m)= \sum_{n\geq 1}\binom{n-1}{m}\frac{1}{n^a}
= \sum_{k=0}^m \frac{s(m,k)}{m!} \zeta(a-k) - \sum_{k=0}^m \frac{s(m,k)}{(m-1)!}\zeta(a-k+1)
$$
and similarly for $\zeta(b;n)$, which gives the desired identity.
\end{proof}

As illustrated in the above proof, an \lwcmzv can be expressed as a linear combination of MZVs with rational coefficients. But the combination can be quite complicated. So it is useful to study \lwcmzvs directly.

\subsection{A $q$-analog of multiple zeta values}\label{sec:aqmzv}
In \cite{Zhao2014a}, the third author studied the double shuffle relations
and the duality relations among various $q$-analogs of the MZVs.
One candidate, first studied by Ohno et al.\ \cite{OOZ2012}, is defined as follows\footnote{The order of indices in the definition \eqref{equ:qMZVtypeII} is opposite to the one used in \cite{Zhao2014a}.}.
Let $q$ be a fixed complex number such that $|q|<1$. For any positive integer $n$
we set $[n]_q=1+q+\dots+q^{n-1}=(1-q^n)/(1-q)$. For
$k$ complex variables $s_1,\dots, s_k$ we define the
$q$-analogs of the multiple zeta function of depth $k$ by
\begin{equation}\mlabel{equ:qMZVtypeII}
\zeta_q(s_1,\dots,s_k)
:=\sum_{1\leq n_1<\dots<n_k} \frac{q^{n_1s_1+\cdots+n_ks_k}}{[n_1]_q^{s_1}\cdots[n_k]_q^{s_k}}.
\end{equation}
By \cite[Prop.~2.2]{Zhao2007c} this function converges if ${\rm Re}(s_j+\cdots+s_k)>0$ for
all $j=1,\dots,k$. In particular, the special value $\zeta_q(\alpha)$ is well-defined
for all $\alpha\in \NN^{k-1}\times\PP$. We call it a {\bf{ $q$-analog of multiple zeta value}}
($q$-MZV for short).

Similar to the lwcmzvs, by evaluating $x_n$ at $q^n/[n]_q$,
we can send $M_\alpha$ to the $q$-MZV $\zeta_q(\alpha)$. Again,
the quasi-shuffle relation or stuffle relation of the  $q$-MZVs is a
direct consequence of the quasi-shuffle product on $\EQSYM$. Hence
the following result can be proved in the same way as that of Theorem~\ref{binomazetafomucx}.

For $I=(i_1,\cdots,i_k)\in \NN^k$ and $(s_1,\cdots,s_k)\in \PP^k$,
we write
\begin{equation*}
\zeta_q(s_1,\cdots,s_k;I):=\zeta_q(0^{i_1},s_1,\cdots,0^{i_k},s_k).
\end{equation*}

\begin{theorem}
Let $a,b,m,n$ be nonnegative integers with $a\geq m+2$, $b\geq n+2$. Then
\begin{align*}
\zeta_q(a;m)\zeta_q(b;n)=&\sum_{i=0}^{m}\sum_{k=n}^{m+n-i}\binom{k}{n}
\binom{n+1}{i+k-m+1}\zeta_q(b,a;k,i)\cr
+&\sum_{i=0}^{n}\sum_{k=m}^{m+n-i}\binom{k}{m}
\binom{m+1}{i+k-n+1}\zeta_q(a,b;k,i)
+\sum_{k=n}^{m+n}\binom{k}{n}\binom{n}{k-m}\zeta_q(a+b; {k}).
\end{align*}
\end{theorem}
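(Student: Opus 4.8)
The plan is to run the argument of Theorem~\ref{binomazetafomucx} with the single change that the classical evaluation $x_n\mapsto 1/n$ is replaced by the $q$-evaluation $x_n\mapsto q^n/[n]_q$. All of the combinatorial content is already packaged in the product formula of Theorem~\ref{thmmonproduct} and in the explicit expansion of $(0^m,a)\ast(0^n,b)$ supplied by Corollary~\ref{omastuffle0mb}; what remains is only to check that the $q$-evaluation converts these algebraic identities into numerical identities among $q$-MZVs.

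First I would record that the assignment $M_\alpha\mapsto\zeta_q(\alpha)$ is exactly the substitution $x_n=q^n/[n]_q$ into the power series $M_\alpha$, and that by \eqref{equ:qMZVtypeII} together with \cite[Prop.~2.2]{Zhao2007c} the resulting series converges for every left weak composition $\alpha$: since $\alpha$ ends in a positive component, every tail sum of its exponent sequence is bounded below by that final (positive) exponent and is hence strictly positive. In fact the convergence is unconditional here, because $|q|<1$ makes $q^{ns}$ decay geometrically and so dominates the polynomial growth contributed by the zero components; thus the substitution is a well-defined algebra homomorphism on all of $\EQSYM$. Because two absolutely convergent multiple series may be multiplied and reindexed, the identity $M_\alpha M_\beta=\sum_{\gamma\in(\alpha\ast\beta)}M_\gamma$ of Theorem~\ref{thmmonproduct} descends to
$$\zeta_q(\alpha)\zeta_q(\beta)=\sum_{\gamma\in(\alpha\ast\beta)}\zeta_q(\gamma).$$
Specializing to $\alpha=(0^m,a)$ and $\beta=(0^n,b)$, inserting Corollary~\ref{omastuffle0mb}, and translating each resulting term $\zeta_q(0^k,b,0^i,a)$, $\zeta_q(0^k,a,0^i,b)$, $\zeta_q(0^k,a+b)$ into the abbreviations $\zeta_q(b,a;k,i)$, $\zeta_q(a,b;k,i)$, $\zeta_q(a+b;k)$ fixed just before the theorem, reproduces the three displayed sums exactly.

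I expect no serious obstacle; the only point needing care is the analytic justification that the $q$-evaluation is multiplicative, that is, that the product of the two multiple $q$-series defining $\zeta_q(\alpha)$ and $\zeta_q(\beta)$ may be reorganized along the quasi-shuffle. This is where one invokes absolute convergence to license the reindexing (Fubini for the multiple series), which is comfortably guaranteed by the geometric decay of $q^{n}/[n]_q$. The hypotheses $a\geq m+2$ and $b\geq n+2$ are inherited from Theorem~\ref{binomazetafomucx} and are more than sufficient; indeed, in the $q$-setting they could be relaxed, since every $q$-MZV indexed by a left weak composition already converges.
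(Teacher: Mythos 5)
Your proposal is correct and takes essentially the same route as the paper, which proves this theorem by exactly the argument you describe: the evaluation $x_n\mapsto q^n/[n]_q$ sends $M_\alpha$ to $\zeta_q(\alpha)$, the quasi-shuffle identity of Theorem~\ref{thmmonproduct} then descends to the stuffle relation for $q$-MZVs, and specializing to $\alpha=(0^m,a)$, $\beta=(0^n,b)$ via Corollary~\ref{omastuffle0mb} gives the displayed formula, just as in the proof of Theorem~\ref{binomazetafomucx}. Your convergence check (every tail sum of a left weak composition is bounded below by its final positive component, so $\zeta_q(\alpha)$ converges for all $\alpha\in\NN^{k-1}\times\PP$) matches the paper's appeal to \cite[Prop.~2.2]{Zhao2007c}, and your observation that $a\geq m+2$, $b\geq n+2$ are not needed in the $q$-setting is consistent with the paper's remarks preceding the theorem.
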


Unlike the \lwcmzvs, $q$-MZVs with 0 argument appear naturally
due to the following duality relations.
\begin{theorem} \emph{(\cite[Theorem~8.3]{Zhao2014a})}
For $(s_1,\cdots,s_k),(t_1,\cdots,t_k)\in \PP^k$, we have
\begin{align*}
\zeta_q(t_1,\cdots,t_k;s_1-1,\cdots,s_k-1)
=\zeta_q(s_k,\cdots,s_1;t_k-1,\cdots,t_1-1).
\end{align*}
\end{theorem}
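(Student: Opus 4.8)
The plan is to prove this as a \emph{duality} relation, which — unlike the stuffle product already recorded for $\zeta_q$ — does not follow from the quasi-shuffle structure and instead requires the ``integral side'' of the theory. My strategy has a combinatorial part and an analytic part, and the combinatorial part can already be settled. First I would encode every left weak composition index by a word in two letters via the dictionary $\alpha=(\alpha_1,\dots,\alpha_m)\mapsto w(\alpha):=\phi_1\phi_0^{\alpha_1}\phi_1\phi_0^{\alpha_2}\cdots\phi_1\phi_0^{\alpha_m}$, so that each entry contributes one $\phi_1$ followed by a block of $\phi_0$'s. Such a word always begins with $\phi_1$, and the dictionary is invertible by cutting the word at each occurrence of $\phi_1$ and recording the number of trailing $\phi_0$'s. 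Under this dictionary the left-hand index $(0^{s_1-1},t_1,\dots,0^{s_k-1},t_k)$ becomes $\phi_1^{s_1}\phi_0^{t_1}\phi_1^{s_2}\phi_0^{t_2}\cdots\phi_1^{s_k}\phi_0^{t_k}$ (each block $0^{s_p-1}$ supplies $\phi_1^{s_p-1}$, which merges with the following $\phi_1$). Applying the anti-automorphism $\iota$ that \emph{reverses} a word and \emph{swaps} $\phi_0\leftrightarrow\phi_1$ turns this into $\phi_1^{t_k}\phi_0^{s_k}\cdots\phi_1^{t_1}\phi_0^{s_1}$, which decodes to $(0^{t_k-1},s_k,\dots,0^{t_1-1},s_1)$ — exactly the right-hand index. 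Thus the theorem is \emph{equivalent} to the single assertion that $\zeta_q$ takes equal values on indices whose words are related by $\iota$.

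To obtain that $\iota$-invariance I would install a $q$-iterated-integral (Jackson integral) representation. The goal is to produce two $q$-one-forms $\omega_0,\omega_1$ on $[0,1]$ so that the iterated Jackson integral of $\omega_{u_1}\cdots\omega_{u_L}$ over the $q$-simplex $0<t_1<\cdots<t_L<1$ reproduces $\zeta_q$ of the index whose word is $u_1\cdots u_L$, compatibly with the dictionary above; here the normalization $\int_0^1 t^{\,n-1}\,d_qt=1/[n]_q$ accounts for the denominators, while the OOZ numerators $q^{n_js_j}$ are absorbed into the choice of $\omega_0$ via a dilation $t\mapsto qt$. The duality is then produced by the $q$-analog of the substitution $t\mapsto 1-t$: an order-reversing involution of the integration domain that interchanges $\omega_0$ and $\omega_1$. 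Tracking the induced effect on words shows that the integral of $\omega_{u_1}\cdots\omega_{u_L}$ equals that of $\omega_{\iota(u)_1}\cdots\omega_{\iota(u)_L}$, i.e. $\zeta_q$ is $\iota$-invariant, and combining this with the word computation of the first paragraph finishes the proof.

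The hard part will be exactly this analytic step: pinning down $\omega_0,\omega_1$ and verifying that the $q$-substitution genuinely reverses the $q$-simplex and swaps the two forms while reproducing the numerators $q^{n_js_j}$ on the nose. The classical proof hinges on the clean symmetry of $dt/t$ and $dt/(1-t)$ under $t\mapsto 1-t$, and the delicate issue is to deform this so that the Jackson measure and the powers of $q$ stay compatible. An attractive alternative that sidesteps $q$-integrals, and is closer in spirit to the binomial manipulations used throughout (such as the Pascal-identity computations in the proof of Lemma~\ref{00bstuff}), is the connected-sum method: introduce a connected double series $Z(\mathbf a\mid\mathbf b)$ with a $q$-connector built from Gaussian binomial coefficients, prove two ``transport relations'' by a $q$-Pascal / $q$-partial-summation identity that move one letter at a time from the $\mathbf a$-side to the $\mathbf b$-side while swapping its type, and run the transport to completion so that $\mathbf a$ is fully carried onto its $\iota$-image; the boundary evaluations $Z(\mathbf a\mid\emptyset)=\zeta_q(\mathbf a)$ then deliver the duality. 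With this route the obstacle migrates to discovering the correct connector and proving the transport relation, but once the connector is identified both are finite, elementary verifications.
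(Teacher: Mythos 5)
Your first paragraph is correct and is essentially the standard word-level formulation: the dictionary $\alpha\mapsto\phi_1\phi_0^{\alpha_1}\cdots\phi_1\phi_0^{\alpha_m}$ does send the left-hand index $(0^{s_1-1},t_1,\cdots,0^{s_k-1},t_k)$ to $\phi_1^{s_1}\phi_0^{t_1}\cdots\phi_1^{s_k}\phi_0^{t_k}$, and the reverse-and-swap anti-automorphism $\iota$ carries this to the word of the right-hand index, so the theorem is indeed equivalent to $\iota$-invariance of $\zeta_q$ on admissible words. Be aware, though, that the paper itself gives no proof of this statement: it is imported verbatim from \cite[Theorem~8.3]{Zhao2014a}. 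So the entire burden of your proposal rests on the second step, and that is where there is a genuine gap.

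The $q$-analog of $t\mapsto 1-t$ that your main route requires does not exist as a point transformation, and this is not a technical detail to be ``pinned down'' later --- it is the central obstruction. A Jackson integral $\int_0^1 f(t)\,d_qt=(1-q)\sum_{n\geq 0}q^nf(q^n)$ is supported on the geometric lattice $\{q^n\mid n\in\NN\}$, which the reflection $t\mapsto 1-t$ does not preserve; consequently there is no change-of-variables formula transporting an iterated Jackson integral over the $q$-simplex to one over the reversed simplex with $\omega_0$ and $\omega_1$ interchanged. This is precisely why duality is delicate and model-dependent for $q$-MZVs: it fails for most models and holds for the OOZ model \cite{OOZ2012} only because the numerators $q^{n_js_j}$ conspire in a way that must be exploited by a different mechanism (in the cited source, via the Rota--Baxter operator representation by iterated $q$-summations and manipulations of the discrete summation variables, not by reflecting $[0,1]$). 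As written, the sentence ``tracking the induced effect on words shows that the integral \ldots equals that of \ldots'' asserts exactly the statement to be proved. Your fallback via connected sums is a legitimate strategy --- Gaussian-binomial connectors proving $q$-dualities of this type do exist in the literature --- but you have neither exhibited the connector nor stated, let alone verified, the transport relation, so that route too is a research plan rather than an argument. In sum: the combinatorial equivalence is established, but the substance of the theorem (the $\iota$-invariance) remains unproved on both of your proposed routes.
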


Recall that one can equip the  $q$-MZVs with a $q$-shuffle structure by
applying a suitable Rota--Baxter algebra as follows.
Let $A=\{\rho,y\}$ be an alphabet of two letters
and $A^*$  be the set of words generated by $A$.
Define $\fA=\QQ\langle \rho,y\rangle$ to be the noncommutative
polynomial $\QQ$-algebra of $A^*$. Let $\myone$ be the empty word.
Then we can define a $q$-shuffle product $\Cup$ on $\fA$ recursively
by first setting  $\myone\Cup\bfu=\bfu\Cup\myone=\bfu$
for any word $\bfu\in A^*$.
Then for any words $\bfu,\bfv\in A^*$, we define
\begin{align}
 (y\bfu)\Cup\bfv=&\,\bfu\Cup(y\bfv)=y(\bfu\Cup\bfv),	 \label{shuffle-y}\\
\rho\bfu\Cup\rho\bfv=&\,\rho (\bfu\Cup\rho\bfv)
        +\rho (\rho\bfu\Cup\bfv)+(1-q)\rho (\bfu\Cup\bfv).	 \label{shuffle-rhorho}
\end{align}
We call all the words in $\fA^0:=\rho \fA$ \emph{admissible}.

\begin{prop} \emph{(\cite{Zhao2014a})}
The algebra $(\fA^0,\Cup)$ is associative and commutative.
Moreover, there is an algebra homomorphism
\begin{align*}
     \zeta_q: (\fA^0, \Cup) \longrightarrow &\, \RR[\![q]\!] \cr
     \rho^{s_1}y\cdots \rho^{s_k}y \longmapsto &\, \zeta_q(s_k,\dots,s_1).
\end{align*}
\end{prop}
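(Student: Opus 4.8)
The plan is to prove the two assertions in turn: first deduce the associativity and commutativity of $\Cup$ by an induction whose only hard case is the Rota--Baxter relation itself, and then obtain the homomorphism property by realizing $\zeta_q$ as the composite of a Rota--Baxter homomorphism with an evaluation character. Throughout I would first note that $\fA^0$ is closed under $\Cup$ (the product of two words beginning with $\rho$ again begins with $\rho$, by \eqref{shuffle-rhorho}), so that it suffices to establish associativity and commutativity for $\Cup$ on all of $\fA$ and then restrict.

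For commutativity I would induct on the total length $|\bfu|+|\bfv|$ of the two factors, the base case being when one factor is $\myone$. In the inductive step, if either factor begins with $y$, then \eqref{shuffle-y}, being symmetric in its two arguments, lets me strip that $y$ from either side and invoke the inductive hypothesis on the shorter product; if both factors begin with $\rho$, then \eqref{shuffle-rhorho} is manifestly invariant under interchanging the two factors, so again the inductive hypothesis closes the case. For associativity I would induct on $|\bfu|+|\bfv|+|\mathbf{w}|$ and split on leading letters. Whenever one of the three factors begins with $y$, repeated use of \eqref{shuffle-y} together with the bilinearity of $\Cup$ pulls that $y$ outside the entire triple product on both sides, reducing to the inductive hypothesis. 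The one genuinely nontrivial case is when all three factors begin with $\rho$: expanding $(\rho\bfu\Cup\rho\bfv)\Cup\rho\mathbf{w}$ and $\rho\bfu\Cup(\rho\bfv\Cup\rho\mathbf{w})$ by \eqref{shuffle-rhorho} and reducing every inner product by the inductive hypothesis, the verification becomes exactly the associativity bookkeeping for the Rota--Baxter relation of weight $1-q$. I expect this $\rho\rho\rho$ case to be the main obstacle; it is, however, the same computation that underlies the existence of the free commutative Rota--Baxter algebra (Theorem \ref{freerbamsp}). Indeed, under the dictionary $\rho\leftrightarrow P$, $y\leftrightarrow x$ the word $\rho^{s_1}y\cdots\rho^{s_k}y$ corresponds to the nested element $P^{s_1}(x\,P^{s_2}(x\cdots P^{s_k}(x)\cdots))$, and checking that this dictionary carries $\Cup$ to the augmented mixable shuffle product $\diamond$ of \eqref{defdiamond} with weight $1-q$ identifies $(\fA^0,\Cup)$ with the free commutative nonunitary Rota--Baxter algebra of weight $1-q$ on one generator, whence associativity and commutativity follow from the weight-$(1-q)$ analog of Theorem \ref{freenonurba}; this is an alternative to, but not independent of, the direct induction.

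Once associativity is in hand, \eqref{shuffle-rhorho} shows that $\bfu\mapsto\rho\bfu$ is a genuine Rota--Baxter operator of weight $1-q$ on $(\fA^0,\Cup)$, so for the second claim I would produce a commutative Rota--Baxter $\RR[\![q]\!]$-algebra $(\calh,P_\calh)$ of weight $1-q$, a distinguished element $b\in\calh$, and an $\RR[\![q]\!]$-algebra character $\chi\colon\calh\to\RR[\![q]\!]$, and then factor $\zeta_q=\chi\circ\Theta$, where $\Theta\colon\fA^0\to\calh$ is the Rota--Baxter homomorphism determined by $\Theta(y)=b$ and $\Theta\circ\rho=P_\calh\circ\Theta$. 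Concretely $\calh$ is a space of $\RR[\![q]\!]$-valued functions of an index $n$ whose partial sums converge $q$-adically, $P_\calh$ is the Jackson-type summation operator of weight $1-q$ built from the kernel $q^{\bullet}/[\bullet]_q$, and $\chi$ is evaluation at infinity, so that unwinding $\chi\big(\Theta(\rho^{s_1}y\cdots\rho^{s_k}y)\big)$ reproduces the nested sum \eqref{equ:qMZVtypeII} defining $\zeta_q(s_k,\dots,s_1)$. Since $\Theta$ preserves products (by construction it intertwines $\rho$ with $P_\calh$ and sends $y$ to $b$, and its multiplicativity is checked by the very induction used for associativity, with \eqref{shuffle-rhorho} matched by the Rota--Baxter identity for $P_\calh$) and $\chi$ is multiplicative, the composite $\zeta_q$ is an algebra homomorphism. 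I expect the technical heart here to be the explicit construction of $(\calh,P_\calh,b,\chi)$ and the verification that $\chi\circ\Theta$ really equals the series \eqref{equ:qMZVtypeII}; equivalently, it is the identity expressing the product of two $q$-multiple zeta values as the shuffle of two Jackson iterated sums with the weight-$(1-q)$ correction on coincident indices. Once this iterated-sum identity is established, the homomorphism property is automatic and specializes to the $q$-stuffle-type relations recorded earlier in this section.
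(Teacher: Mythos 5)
The paper offers no proof of this proposition at all---it is imported verbatim from \cite{Zhao2014a}---so your attempt can only be compared with Zhao's method, which it does follow in outline (direct induction for commutativity/associativity, plus a Rota--Baxter realization of $\zeta_q$). Your first part is essentially sound: induction on total length, with \eqref{shuffle-y} stripping a leading $y$ from whichever factor carries it, and the all-$\rho$ case reducing after one application of \eqref{shuffle-rhorho} on each side to the standard weight-$\lambda$ quasi-shuffle bookkeeping with $\lambda=1-q$; closure of $\fA^0$ under $\Cup$ is immediate from \eqref{shuffle-rhorho}. However, your side identification is misstated: $(\fA^0,\Cup)$ is \emph{not} the free commutative nonunitary Rota--Baxter algebra of weight $1-q$ on one generator. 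The word $\rho\in\fA^0$ corresponds under your dictionary to $P(1)$, which has no meaning in the nonunitary object, while the generator itself (the word $y$) does not lie in $\fA^0$. The correct statement is that $y^{a_0}\rho y^{a_1}\cdots\rho y^{a_k}\mapsto x^{a_0}\ot x^{a_1}\ot\cdots\ot x^{a_k}$ identifies $(\fA,\Cup)$ with the free \emph{unitary} Rota--Baxter algebra $\sha_{\QQ}(\QQ[x])$ of weight $1-q$ (Eq.~\eqref{defdiamond} with the weight-$(1-q)$ mixable shuffle), under which $\fA^0=\rho\fA$ becomes the image $1\ot\sha_{\QQ}(\QQ[x])$ of the Rota--Baxter operator, a nonunitary subalgebra; this repaired identification still yields associativity and commutativity, so the slip is harmless but should be fixed. (It also exposes a glitch in the statement as transcribed: words of $\rho\fA$ ending in $\rho$ correspond to tensors ending in $1$ and would map to divergent quantities such as $\zeta_q(0)$; the homomorphism really lives on the span of $\myone$ and the words $\rho^{s_1}y\cdots\rho^{s_k}y$.)

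The genuine gap is in your second part, precisely at the step you flagged as the technical heart. The model you sketch---$\RR[\![q]\!]$-valued functions of an index $n$ under pointwise product, with $P_\calh$ the summation operator built from the kernel $\kappa(m)=q^m/[m]_q$, i.e.\ $P_\calh[f](n)=\sum_{m>n}\kappa(m)f(m)$---is \emph{not} a Rota--Baxter operator of weight $1-q$: splitting the double sum into $m<l$, $m>l$, $m=l$ gives $P_\calh[f]P_\calh[g]=P_\calh[fP_\calh g]+P_\calh[P_\calh f\cdot g]+P_\calh[\kappa fg]$, and since $\kappa(m)^2=\kappa(m)/[m]_q-(1-q)\kappa(m)$, the diagonal term equals $P_\calh\bigl[[\,\cdot\,]_q^{-1}fg\bigr]-(1-q)P_\calh[fg]$, which is not $(1-q)P_\calh[fg]$ (wrong sign, plus a non-constant-weight remainder). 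That kernel picture is in fact the stuffle side already encoded by Theorem~\ref{thmmonproduct}, not the $\Cup$ side. What works---and is Zhao's actual device---is to sum over $q$-dilations rather than over the index: on $\calh=x\RR[\![q]\!][\![x]\!]$ with the power-series product, let $E_q$ be the algebra endomorphism $f(x)\mapsto f(qx)$ and set $P_\calh:=(1-q)\sum_{j\geq1}E_q^{j}$; because $E_q$ is multiplicative, the same three-way splitting over the exponents $j$ yields weight exactly $1-q$, matching \eqref{shuffle-rhorho}. Taking $b=x/(1-x)$ and $\chi$ the evaluation at $x=1$ (convergent $q$-adically on the image of the admissible words, whose $x^n$-coefficients are divisible by $q^n$), one checks that $P_\calh$ multiplies the $n$th coefficient by $q^n/[n]_q$ while multiplication by $b$ takes strict partial sums, so $\chi\bigl(\Theta(\rho^{s_1}y\cdots\rho^{s_k}y)\bigr)$ unwinds exactly to \eqref{equ:qMZVtypeII} with the stated reversal of indices. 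With this substitution your factorization $\zeta_q=\chi\circ\Theta$ goes through; as written, your construction would stall at the weight verification.
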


\begin{lemma}\mlabel{lem:wordFormDZshuffle}
If we set $q=1$ then for any $a,b,m,n\in\PP$, we have
\begin{equation}\label{equ:dzshuffle}
   \rho^a y^m\Cup  \rho^b y^n=
   \sum_{i=0}^{b-1}\binom{a+i-1}{i}\rho^{a+i} y^m \rho^{b-i} y^n
   + \sum_{j=0}^{a-1}\binom{b+j-1}{j}\rho^{b+j} y^n \rho^{a-j} y^m.
\end{equation}
\end{lemma}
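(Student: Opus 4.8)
The plan is to prove \eqref{equ:dzshuffle} by induction on $a+b$, driven entirely by the defining recursion of $\Cup$. Setting $q=1$ kills the $(1-q)$-term in \eqref{shuffle-rhorho}, so that whenever both arguments begin with $\rho$ the product obeys the ordinary shuffle recursion on $\rho$,
\[
\rho^a y^m \Cup \rho^b y^n = \rho\bigl(\rho^{a-1}y^m \Cup \rho^b y^n\bigr) + \rho\bigl(\rho^a y^m \Cup \rho^{b-1}y^n\bigr),\qquad a,b\geq 1.
\]
The point I would stress is that the trailing blocks $y^m$ and $y^n$ never interact through this rule: the whole computation amounts to interleaving the two blocks of $\rho$'s, while the $y$-rule \eqref{shuffle-y} merely flushes out the leftover letters once one $\rho$-block is used up.

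First I would record the two base identities that terminate the recursion. Iterating \eqref{shuffle-y} (a short induction on $m$, respectively $n$, since each leading $y$ is forced to the front before any $\rho$ may be touched) gives
\[
y^m \Cup \rho^c y^n = y^m\rho^c y^n
\qquad\text{and}\qquad
\rho^c y^m \Cup y^n = y^n \rho^c y^m .
\]
These are exactly the values needed when $a$ or $b$ drops to $0$ inside the recursion.

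For the inductive step I would substitute the induction hypothesis for $\rho^{a-1}y^m \Cup \rho^b y^n$ and $\rho^a y^m \Cup \rho^{b-1}y^n$, namely the right-hand side of \eqref{equ:dzshuffle} evaluated at $(a-1,b)$ and $(a,b-1)$, prepend a $\rho$ to every resulting word, and then sort the outputs into the two shapes $\rho^{a+i}y^m\rho^{b-i}y^n$ and $\rho^{b+j}y^n\rho^{a-j}y^m$. After reindexing, the coefficient of $\rho^{a+i}y^m\rho^{b-i}y^n$ for $1\leq i\leq b-1$ becomes $\binom{a+i-2}{i}+\binom{a+i-2}{i-1}$, which collapses to $\binom{a+i-1}{i}$ by Pascal's identity (the same tool used in Lemma~\ref{00bstuff} and Corollary~\ref{oomn}); the extreme term $i=0$ matches directly, and the family with $y^n$ in front is handled symmetrically. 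This reproduces the claimed right-hand side.

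The one delicate point — and the step I expect to be the real obstacle — is the boundary bookkeeping. When $a=1$ (or $b=1$) the recursion produces $\rho^{0}y^m\Cup\rho^b y^n = y^m\Cup\rho^b y^n$, to which the formula does not literally apply; its value must instead be read off from the base identity above, and one must check that, after prepending the outer $\rho$, it supplies precisely the $i=0$ term with coefficient $\binom{a-1}{0}=1$, consistently with the Pascal recursion (recall $\binom{-1}{0}=0$ under the convention of this section). I would verify this boundary case explicitly so that the induction closes. As an independent sanity check I would also note the combinatorial reading: every surviving monomial corresponds to an interleaving of the $a+b-c$ copies of $\rho$, and the number of interleavings that terminate just as the first (respectively second) $\rho$-block empties is exactly $\binom{a+i-1}{i}$ (respectively $\binom{b+j-1}{j}$), which matches the coefficients obtained from the induction.
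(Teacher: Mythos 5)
Your proposal is correct and follows essentially the same route as the paper's own proof: induction on $a+b$ using the $q=1$ recursion $\rho^a y^m\Cup\rho^b y^n=\rho(\rho^{a-1}y^m\Cup\rho^b y^n)+\rho(\rho^a y^m\Cup\rho^{b-1}y^n)$, with the merged coefficients collapsing by Pascal's identity after reindexing. The boundary bookkeeping you flag as the delicate point is handled in the paper exactly as you propose, by treating the case $a=1$ (or $b=1$) separately via the $y$-rule identity $y^m\Cup\rho^b y^n=y^m\rho^b y^n$, which supplies the $i=0$ term with coefficient $1$.
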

\begin{proof}
We proceed by induction on $a+b$. If $a=b=1$ then
by Eq. \eqref{shuffle-y} and \eqref{shuffle-rhorho} we have
\begin{equation*}
     \rho  y^m\Cup  \rho y^n=\rho(y^m\Cup  \rho y^n)+\rho(\rho y^m\Cup y^n)
     =\rho  y^m\rho y^n+\rho  y^n\rho y^m.
\end{equation*}
So the lemma holds. Now if one of $a$ and $b$ is equal to 1,
say, $a=1$ and $b>1$. Then
\begin{equation*}
   \rho y^m\Cup  \rho^b y^n=\rho(y^m\Cup  \rho^b y^n)
   +\rho(\rho y^m\Cup  \rho^{b-1} y^n)
  = \rho y^m\rho^b y^n +
  \sum_{i=0}^{b-2} \rho^{i+2} y^m \rho^{b-i-1} y^n
   + \rho^{b} y^n \rho  y^m
\end{equation*}
by the induction assumption. So the lemma follows from the substitution $i\to i-1$.
In general, assuming $a,b>1$, we have
\begin{align*}
       \rho^a y^m\Cup  \rho^b y^n=&\,\rho(\rho^{a-1} y^m\Cup \rho^b y^n)
       +\rho(\rho^a y^m\Cup \rho^{b-1} y^n) \cr
  =&\, \sum_{i=0}^{b-1}\binom{a+i-2}{i}\rho^{a+i} y^m \rho^{b-i} y^n
   + \sum_{j=0}^{a-2}\binom{b+j-1}{j}\rho^{b+j+1} y^n \rho^{a-j-1} y^m \cr
  +&\,\sum_{i=0}^{b-2}\binom{a+i-1}{i}\rho^{a+i+1} y^m \rho^{b-i-1} y^n
   + \sum_{j=0}^{a-1}\binom{b+j-2}{j}\rho^{b+j} y^n \rho^{a-j} y^m .
\end{align*}
Applying the substitutions $j\to j-1$ in the second sum and $i\to i-1$ in the third sum,
and then using Pascal's identity $\binom{k}{i}+\binom{k}{i-1}=\binom{k+1}{i}$,
we derive the lemma immediately.
\end{proof}

\begin{coro}\mlabel{cor:shuffle}
For any $a,b,m,n\in\NN$ such that $a\geq m+2$ and $b\geq n+2$, we have
\begin{equation}\notag
 \zeta(a;m)\zeta(b;n)=
   \sum_{i=0}^{b-1}\binom{a+i-1}{i}\zeta(b-i,a+i;n,m)
   + \sum_{j=0}^{a-1}\binom{b+j-1}{j}\zeta(a-j,b+j;m,n).
\end{equation}
\end{coro}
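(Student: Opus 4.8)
The plan is to read off this shuffle relation from the word-level identity of Lemma~\ref{lem:wordFormDZshuffle} by specializing the $q$-MZV homomorphism at $q=1$. The key point is that each \lwcmzv is the value at $q=1$ of a single admissible word. Indeed, both $\zeta(\alpha)$ of~\eqref{eq:emzv} and $\zeta_q(\alpha)$ of~\eqref{equ:qMZVtypeII} arise from the \lwc monomial quasi-symmetric function $M_\alpha$ by the evaluations $x_n\mapsto 1/n$ and $x_n\mapsto q^n/[n]_q$ respectively, and since $[n]_q\to n$ and $q^n\to1$ as $q\to1$ these two evaluations agree at $q=1$; hence $\zeta_{q=1}(\alpha)=\zeta(\alpha)$ for every $\alpha\in\NN^{k-1}\times\PP$. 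In particular $\zeta(a;m)=\zeta(0^m,a)=\zeta_{q=1}(0^m,a)$, and since the homomorphism $\zeta_q$ reverses block exponents, $\rho^{s_1}y\cdots\rho^{s_k}y\mapsto\zeta_q(s_k,\dots,s_1)$, it sends the word $\rho^a y^{m+1}\in\fA^0$ exactly to $\zeta_q(0^m,a)$.

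First I would fix the dictionary between words and \lwcmzvs. Splitting a word of $\fA^0$ into blocks $\rho^{s_j}y$ and reversing, the word $\rho^{a+i}y^{m+1}\rho^{b-i}y^{n+1}$ has block-exponent sequence $(a+i,0^m,b-i,0^n)$, whose reversal $(0^n,b-i,0^m,a+i)$ gives the value $\zeta(b-i,a+i;n,m)$ at $q=1$; symmetrically $\rho^{b+j}y^{n+1}\rho^{a-j}y^{m+1}$ gives $\zeta(a-j,b+j;m,n)$. The one piece of bookkeeping to watch is the shift from $y^{m}$ to $y^{m+1}$: one letter $y$ carries the positive component, while the remaining $m$ trailing letters encode the block $0^m$ that appears after reversal.

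Next, using that $\zeta_q$ is an algebra homomorphism for $\Cup$, I would write $\zeta(a;m)\zeta(b;n)=\zeta_{q=1}\big(\rho^a y^{m+1}\Cup\rho^b y^{n+1}\big)$ and expand the right-hand side by Lemma~\ref{lem:wordFormDZshuffle} applied with its parameters $m,n$ replaced by $m+1,n+1$; this is legitimate because $a\geq m+2$ and $b\geq n+2$ force $a,b\in\PP$ and $m+1,n+1\in\PP$. Applying $\zeta_{q=1}$ term by term and inserting the dictionary converts the two sums of the lemma verbatim into the two sums of the corollary, with the binomial coefficients $\binom{a+i-1}{i}$ and $\binom{b+j-1}{j}$ unchanged.

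The genuine obstacle, as opposed to the indexing, is justifying the evaluation at $q=1$, that is, the convergence of every \lwcmzv that occurs. The sufficient condition $s_p\geq i_p+2$ recorded after~\eqref{eq:emzv} actually fails for some terms, for instance for $\zeta(b-i,a+i;n,m)$ when $i$ is close to $b-1$. I would instead verify convergence directly at depth two: the outer sum of $\zeta(b-i,a+i;n,m)$ behaves like $\sum n_2^{m-(a+i)}$ and the full double sum like $\sum n_1^{(m+n)-(a+b)+1}$, so it converges once $(a+i)-m\geq2$ and $(a+b)-(m+n)\geq3$, both of which follow from $a\geq m+2$ and $b\geq n+2$; the same estimate covers $\zeta(a-j,b+j;m,n)$. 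With convergence secured, the identity, valid in $\RR[\![q]\!]$ by the homomorphism property, may be evaluated at $q=1$ to give the stated formula.
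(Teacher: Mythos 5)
Your proposal is correct and takes essentially the same route as the paper: the paper's proof is precisely to replace $m$, $n$ by $m+1$, $n+1$ in Lemma~\ref{lem:wordFormDZshuffle} and invoke the fact that $\lim_{q\to 1}\zeta_q(\alpha)=\zeta(\alpha)$ whenever $\zeta(\alpha)$ converges. Your explicit word-to-\lwcmzv dictionary and your direct depth-two convergence check (needed since the sufficient condition $s_p\geq i_p+2$ fails for some terms, e.g.\ $\zeta(b-i,a+i;n,m)$ with $i$ near $b-1$) simply make rigorous the details the paper leaves implicit behind ``whenever $\zeta(\alpha)$ converges.''
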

\begin{proof}
Replace $m$ (resp.\ $n$) by $m+1$ (resp.\ $n+1$) in Lemma~\ref{lem:wordFormDZshuffle}
and use the fact that $\lim\limits_{q\to 1} \zeta_q(\alpha)=\zeta(\alpha)$
whenever $\zeta(\alpha)$ converges.
\end{proof}

Combining with Theorem~\ref{binomazetafomucx}, this provides a generalization of
the finite double shuffle relations of MZVs to the product of two \lwcmzvs.

\begin{exam}\label{exampleDBSF}
By Theorem \ref{binomazetafomucx}, we have
\begin{equation*}
\zeta(3;1)^2= 4\zeta(3,3;1,0)+4\zeta(3,3;2,0)+2\zeta(3,3;1,1)+\zeta(6;1)+2\zeta(6;2).
\end{equation*}
On the other hand, by Corollary~\ref{cor:shuffle},
\begin{equation*}
\zeta(3;1)^2=2\zeta(3,3;1,1)+6\zeta(2,4;1,1)+12\zeta(1,5;1,1).
\end{equation*}
Thus we obtain a relation among the left weak composition zeta and double zeta values:
\begin{equation*}
4\zeta(3,3;1,0)+4\zeta(3,3;2,0)+\zeta(6;1)+2\zeta(6;2)=6\zeta(2,4;1,1)+12\zeta(1,5;1,1).
\end{equation*}
\end{exam}

Note that $\zeta(a;0)=\zeta(a)$, so
if we take $m,n$ to be $0$, then Corollary \ref{cor:shuffle} implies the
well-known {\bf Euler decomposition formula}
\begin{align}\mlabel{Eudeform}
\zeta(a)\zeta(b)=\sum_{i=0}^{b-1} \binom{a+i-1}{i}\zeta(b-i,a+i)
+\sum_{j=0}^{a-1} \binom{b+j-1}{j}\zeta(a-j,b+j),\quad a, b\geq 2,
\end{align}
which can also be proved by a shuffle product argument (see {\em e.g.} \cite{GX}).
\smallskip

\noindent
{\bf Acknowledgements.}
H. Yu thanks the hospitality and stimulating environment provided by NYU Polytechnic School of Engineering.
This work was supported by National Natural Science Foundation of China $($No. 11426183, 11501467$)$,
Chongqing Research Program of Application Foundation and Advanced Technology $($No. cstc2014jcyjA00028$)$.

\end{document}